\definecolor{mygray}{gray}{0.85}
\renewcommand{\leq}{\leqslant}
\renewcommand{\geq}{\geqslant}
\renewcommand{\trianglelefteq}{\trianglelefteqslant}
\newcommand{\mrm}[1]{\mathrm{#1}}
\def\subsection{\@startsection{subsection}{3}%
  \z@{.5\linespacing\@plus.7\linespacing}{.3\linespacing}%
  {\bfseries\centering}}
\def\subsubsection{\@startsection{subsubsection}{3}%
  \z@{.5\linespacing\@plus.7\linespacing}{.3\linespacing}%
  {\centering}}
\def\myfnt{\ifx\protect\@typeset@protect\expandafter\footnote\else\expandafter\@gobble\fi}
\newtheorem{theorem}{Theorem}[section]
\newtheorem{corollary}[theorem]{Corollary}
\newtheorem{lemma}[theorem]{Lemma}
\newtheorem{proposition}[theorem]{Proposition}
\newtheorem{question}[theorem]{Question}
\newtheorem{conjecture}[theorem]{Conjecture}
\newtheorem*{theorem1.3}{Theorem~1.3}
\newtheorem*{corollary1.5}{Corollary~1.5}
\theoremstyle{plain}
\theoremstyle{definition}
\newtheorem{fact}[theorem]{Fact}
\newtheorem{definition}[theorem]{Definition}
\newtheorem{remark}[theorem]{Remark}
\newtheorem{notation}[theorem]{Notation}
\newcounter{claimcounter}
\begin{document}

\begin{abstract} 
We prove several results on the model theory of Artin groups, focusing on Artin groups which are ``far from right-angled Artin groups''. The first result is that if $\mathcal{C}$ is a class of Artin groups whose irreducible components are acylindrically hyperbolic and torsion-free, then the model theory of Artin groups of type $\mathcal{C}$ reduces to the model theory of its irreducible components. The second result is that the problem of superstability of a given non-abelian Artin group $A$ reduces to certain dihedral parabolic subgroups of $A$ being $n$-pure in $A$, for certain large enough primes  $n \in \mathbb{N}$. The third result is that two spherical Artin groups are elementary equivalent if and only if they are isomorphic. Finally, we prove that the affine Artin groups of type $\tilde{A}_n$, for $n \geq 4$, can be distinguished from the other simply laced affine Artin groups using existential sentences; this uses homology results of independent interest relying on the recent proof of the $K(\pi, 1)$ conjecture for affine Artin groups.
\end{abstract}

\title{First-order aspects of Artin groups}


\thanks{Research of the first and second named authors was  supported by project PRIN 2022 ``Models, sets and classifications", prot.~2022TECZJA, and of the second named author by INdAM Project 2024 (Consolidator grant) ``Groups, Crystals and Classifications''. The third named author was supported by project PRIN 2022 ``Algebraic and topological combinatorics'', prot.~2022A7L229. The first and third named authors are affiliated with the GNSAGA research groups of INdAM. The authors are deeply grateful to Simon Andr{\' e}  for his essential contributions to the proofs of the following three claims: \ref{main_th1}, \ref{simon_prop} and \ref{affArt_retr}}


\author{Alberto Cassella}
\address{Department of Mathematics ``Giuseppe Peano'', University of Torino, Via Carlo Alberto 10, 10123, Italy.}

\author{Gianluca Paolini}
\address{Department of Mathematics ``Giuseppe Peano'', University of Torino, Via Carlo Alberto 10, 10123, Italy.}

\author{Giovanni Paolini}
\address{Department of Mathematics, University of Bologna, Piazza di Porta S.~Donato 5, 40126, Italy}

\date{\today}
\maketitle




\section{Introduction}

	In recent years, the model theory of finitely generated groups has seen crucial advancements. A prominent role in these advancements has been played by the employment of methods from hyperbolic geometry and, more generally, geometric group theory. Two classes of finitely generated groups central to geometric group theory are certainly Coxeter groups and their cousins, Artin groups. In \cite{MPS22}, the second author et al. started investigating the (first-order) model theory of (finitely generated) Coxeter groups;	this was later continued in \cite{PaAnd, PaAnd2, PaSk}, with special focus on affine and hyperbolic Coxeter groups.
	In another direction, Casals-Ruiz, Kazachkov, et al., proved many beautiful results on the model theory of right-angled Artin groups (RAAGs); see e.g.\ \cite{CRKR10, CRK11, CRKNG}. The aim of this paper is to advance the model theory of Artin groups, focusing on results that involve other classes of Artin groups, with a particular focus on groups which are ``far \mbox{from RAAGs''.} 
	
\medskip
	
	We structure our paper around four fundamental questions: 
    \begin{enumerate}[(A)]
        \item domains and irreducible components;
        \item superstability;
        \item distinguishing spherical Artin groups via first-order sentences;
        \item distinguishing affine Artin groups via first-order sentences.
    \end{enumerate}

\subsection{Reduction to irreducible components}\label{irr_component_intro}

	Artin groups, as well as Coxeter groups, are defined using certain labeled graphs. More precisely, given a finite complete graph $\Gamma$ on a set $S$ with labels $m(s, t)$ from $\{n \in \mathbb{N} : n \geq 2\} \cup \{\infty\}$, one defines the Artin group $A(\Gamma)$ through the following presentation:
	$$\langle S \mid (s, t)_{m(s, t)} = (t, s)_{m(s, t)}, \, s, t \in S \rangle,$$
where $(s, t)_{m(s, t)}$ denotes the alternating word on alphabet $\{s, t\}$ which starts with $s$ and has length ${m(s, t)}$; by convention, if ${m(s, t)} = \infty$, then we let $(s, t)_{m(s, t)} = e$. The Coxeter group $W(\Gamma)$ associated to $\Gamma$ is defined by the same presentation as $A(\Gamma)$, with the extra relations that make every generator an involution.

	One often drops the edges labeled by $2$ from the graph $\Gamma$, and in this way the connected components of $\Gamma$ correspond to direct summands of the group $A$, and so, given an Artin system $(A, S)$, there exists a unique decomposition of $A$ into irreducible components $A_1 \times \cdots \times A_n$. Anybody familiar with Artin groups (resp. Coxeter groups) knows that in most cases the study of these groups reduces to the study of their irreducible components. In this spirit, we might wonder: does the model theory of an Artin group reduce to the model theory of its irreducible components?
    Thankfully, there is a well-developed general theory which can be employed to answer this question, which goes under the name of {\em algebraic geometry over groups} (cf.\ \cite{alg_geom_over_groups, alg_geom_over_groups2, alg_geom_over_groups3}). In particular, in \cite{alg_geom_over_groups}, the following notion  was isolated: we say that a group $A$ is a domain if for every $x, y \in A$, whenever $x, y \neq e$ there is $g \in A$ such that $[x, y^g] \neq e$. In \cite{alg_geom_over_groups3}, it was then showed that if a group $A$ decomposes as a direct product $A_1 \times \cdots \times A_n$ of domains, then, in many respects, the model theory of $A$ reduces to the model theory of the components $A_i$. More precisely we have:
	
	\begin{fact}[{\cite{alg_geom_over_groups3}}]\label{reduction_to_components_fact} Let $A = A_1 \times \cdots \times A_n$ with the $A_i$'s domains. Then:
	\begin{enumerate}[(i)]
	\item if $A \equiv B$, then $B$ is also a finite direct product of domains $B = B_1 \times \cdots \times B_k$, with $k = n$ and $A_i \equiv B_i$, for all $i \in [1, n]$ ;

	\item for every $i \in [1, n]$, $\mrm{Th}(A_i)$ is interpretable in $A$;
	\item $A$ is $\lambda$-stable if and only if $A_i$ is $\lambda$-stable for every $i \in [1, n]$;
	\item $\mrm{Th}(A)$ is decidable if and only if $\mrm{Th}(A_i)$ is decidable for every $i \in [1, n]$.
\end{enumerate}
\end{fact}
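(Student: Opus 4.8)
The plan is to show that a finite direct product of domains detects its own decomposition by first‑order means over the empty set, so that (i), (ii) and the forward directions of (iii)--(iv) follow by relativisation of quantifiers, while the converse directions of (iii)--(iv) come from a Feferman--Vaught analysis of finite products; this essentially reconstructs the argument of \cite{alg_geom_over_groups3}. The combinatorial heart is this: a group $G$ is a domain exactly when $C_G(\langle\!\langle x \rangle\!\rangle) = \{e\}$ for every $x \neq e$. Writing $g_i$ for the $i$‑th coordinate of $g \in A = A_1 \times \cdots \times A_n$ and $\mathrm{supp}(g) = \{ i : g_i \neq e\}$, and using that $[g,h^f] = ([g_1,h_1^{f_1}], \dots, [g_n,h_n^{f_n}])$ together with the fact that $g_i$ commutes with all $A_i$‑conjugates of $h_i$ iff $h_i$ commutes with all $A_i$‑conjugates of $g_i$, one gets that the $\emptyset$‑definable relation
\[
g \perp h \; :\equiv \; \forall f \, \big( [g, h^f] = e \big)
\]
holds iff $\mathrm{supp}(g) \cap \mathrm{supp}(h) = \emptyset$ (with $e \perp h$ for all $h$). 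We may assume every $A_i \neq \{e\}$, discarding trivial factors.

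Since a pairwise‑$\perp$ family of nontrivial elements has pairwise disjoint nonempty supports, such a family has size at most $n$, and size $n$ is attained by choosing one nontrivial element of each $A_i$; hence ``the maximum size of a pairwise‑$\perp$ family of nontrivial elements equals $n$'' is a true sentence of $A$. In any pairwise‑$\perp$ family $w_1,\dots,w_n$ of size $n$ every support is a singleton (otherwise split a non‑singleton support using nontriviality of the relevant $A_j$, contradicting maximality), so after relabelling $\mathrm{supp}(w_i) = \{i\}$; then, setting $C_i(\bar w) := \{ z \in A : z \perp w_j \text{ for all } j \neq i \}$ and $D_i(\bar w) := \{z \in A : z \perp w_i\}$, one has $C_i(\bar w) = A_i$ and $D_i(\bar w) = \prod_{j \neq i} A_j$. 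By relativising quantifiers, ``for every pairwise‑$\perp$ family $\bar w$ of size $n$, the subgroups $C_i(\bar w)$ pairwise commute, jointly generate the group, satisfy $C_i(\bar w) \cap D_i(\bar w) = \{e\}$, and each satisfies the domain axiom'' is a true sentence of $A$.

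For (i) and (ii): if $A \equiv B$, then $B$ has a pairwise‑$\perp$ family $\bar v$ of size $n$ (automatically maximal), and by the transferred sentences the subgroups $B_i := C_i(\bar v)$ give an internal decomposition $B = B_1 \times \cdots \times B_n$ with each $B_i$ a domain. For a group sentence $\psi$ and $k \leq n$, ``there is a pairwise‑$\perp$ family of size $n$ for which at least $k$ of the associated subgroups $C_i$ satisfy $\psi$'' is first‑order and, in $A$ resp.\ $B$, asserts that at least $k$ of the $A_i$ resp.\ $B_i$ satisfy $\psi$; as these agree for all $\psi$ and $k$, the multisets $\{\mrm{Th}(A_i)\}_i$ and $\{\mrm{Th}(B_i)\}_i$ coincide, so $A_i \equiv B_i$ after reindexing. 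The same coding yields a computable reduction $\mrm{Th}(A_i) \leq \mrm{Th}(A)$, namely $A_i \models \psi$ iff ``every pairwise‑$\perp$ family of size $n$ has all its associated subgroups $C_i$ satisfy $\psi$'' lies in $\mrm{Th}(A)$, proving (ii) and the forward direction of (iv); the forward direction of (iii) follows because $A_i$ is (isomorphic to) a definable subgroup of $A$ and $\lambda$‑stability passes to definable subgroups. For the converse directions: any $M \preceq A$ of size $\leq\lambda$ embeds into $M_1 \times \cdots \times M_n$ with $M_i := \pi_i(M) \preceq A_i$, and by the Feferman--Vaught theorem $\mrm{Th}(A)$ is computable from $\mrm{Th}(A_1),\dots,\mrm{Th}(A_n)$ (the index structure being a fixed finite Boolean algebra), which gives decidability of $\mrm{Th}(A)$ from that of the $\mrm{Th}(A_i)$ and the bound $|S_A(M)| \leq \prod_i |S_{A_i}(M_i)| \leq \lambda$, hence $\lambda$‑stability of $A$.

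The main obstacle is the step that upgrades ``$B$ is \emph{some} direct product of domains'' to the matching $A_i \equiv B_i$: one must verify that the parameter‑definitions $C_i(\bar w)$ are uniform enough that a pairwise‑$\perp$ family of maximal size genuinely pins down an \emph{internal} direct product decomposition (not merely a commuting family of subgroups), and then run the counting argument that turns agreement of truth values in the aggregate into equality of the multisets of complete theories. A secondary point is being careful about conventions for $\lambda$‑stability and about the exact sense of ``interpretable'': what is actually used is the computable reduction $\mrm{Th}(A_i) \leq \mrm{Th}(A)$ obtained by quantifying away the parameters, which suffices for (ii)--(iv).
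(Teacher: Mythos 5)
The paper offers no proof of this statement: it is quoted as a known result of Kvaschuk--Myasnikov--Remeslennikov \cite{alg_geom_over_groups3}, so there is no in-paper argument to compare against. Your reconstruction follows the same route as that source: in a product of domains the relation $g\perp h:\equiv\forall f\,[g,h^f]=e$ detects disjointness of supports (precisely because no two nontrivial elements of a domain can satisfy it), maximal pairwise-$\perp$ systems recover the factors as parameter-definable subgroups $C_i(\bar w)$, and Feferman--Vaught handles the converse directions of (iii)--(iv). The main steps check out, including the finitary argument that equality of the counting sentences forces the multisets $\{\mathrm{Th}(A_i)\}$ and $\{\mathrm{Th}(B_i)\}$ to coincide. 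One cosmetic point: ``jointly generate the group'' is not first-order as such; you need to express it as surjectivity of $(z_1,\dots,z_n)\mapsto z_1\cdots z_n$ on $C_1(\bar w)\times\cdots\times C_n(\bar w)$, which is legitimate exactly because you have already asserted that the $C_i(\bar w)$ pairwise commute.

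There is, however, one step that is wrong as written: the biconditional ``$A_i\models\psi$ iff `every pairwise-$\perp$ family of size $n$ has all its associated subgroups satisfy $\psi$' lies in $\mathrm{Th}(A)$.'' Permuting a maximal family permutes the subgroups $C_1(\bar w),\dots,C_n(\bar w)$, and without parameters nothing singles out the $i$-th factor; so the quoted sentence is equivalent to $\bigwedge_{j}(A_j\models\psi)$, which implies $A_i\models\psi$ but is not implied by it unless all factors are elementarily equivalent. For (ii) this is harmless if ``interpretable'' is read, as is standard, to allow parameters: fixing one nontrivial element $w_j$ in each factor makes $A_i=C_i(\bar w)$ a definable subgroup of $A$. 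For the forward direction of (iv) the fix uses your own counting sentences: hard-code into the algorithm a sentence $\chi$ with $\chi\in\mathrm{Th}(A_i)$ and $\neg\chi\in U$ for every other complete theory $U$ occurring among the $\mathrm{Th}(A_j)$ (finitely many, so such $\chi$ exists), and decide $\psi\in\mathrm{Th}(A_i)$ by asking whether $A$ satisfies ``some maximal pairwise-$\perp$ family has an associated subgroup satisfying $(\chi\wedge\psi)$ relativized.'' With this adjustment the argument is complete.
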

	
	Thus, the question of reduction of the model theory of a given Artin group to its irreducible components reduces to understanding which Artin groups are domains, in the context of algebraic geometry over groups. Clearly, in order to be a domain, a group has to be indecomposable and with a trivial center. We conjecture that for Artin groups these two are the only obstructions toward being a domain, namely:
	
	\begin{conjecture}\label{first_conj} Every irreducible Artin group modulo its center is a domain.
\end{conjecture}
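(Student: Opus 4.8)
Towards Conjecture~\ref{first_conj}, the plan is to reformulate the notion of domain and then split along the dichotomy spherical / non-spherical. Recall first that a group $G$ is a domain if and only if every nontrivial normal subgroup $N$ of $G$ has trivial centralizer $C_G(N)$: an element $x$ commutes with every conjugate of $y$ exactly when $x$ centralizes the normal closure $\langle\!\langle y\rangle\!\rangle$ of $y$, so failing to be a domain means exactly possessing a nontrivial normal subgroup with nontrivial centralizer. Hence, writing $\overline{A}:=A/Z(A)$, the goal becomes to show that every nontrivial normal subgroup of $\overline{A}$ has trivial centralizer in $\overline{A}$.

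The heart of the argument I would isolate is a purely group-theoretic lemma: \emph{every acylindrically hyperbolic group $G$ with trivial finite radical is a domain.} Its proof is short. Let $N$ be a nontrivial normal subgroup of $G$. By a theorem of Osin, $N$ is either finite --- hence contained in the finite radical and therefore trivial, a contradiction --- or it contains a loxodromic WPD element $h$ for the acylindrical action witnessing hyperbolicity. Then $C_G(N)\le C_G(h)\le E(h)$, the elementary closure of $h$, which is virtually cyclic and almost malnormal in $G$ (Dahmani--Guirardel--Osin); since $G$ is not virtually cyclic we may choose $g\notin E(h)$, whence $C_G(N)=gC_G(N)g^{-1}\le E(h)\cap gE(h)g^{-1}$ is finite, so $C_G(N)$ is a finite normal subgroup of $G$ and therefore trivial. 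In particular every torsion-free acylindrically hyperbolic group is a domain --- precisely what makes Fact~\ref{reduction_to_components_fact} applicable in the situation of the abstract.

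It then remains, for each irreducible Artin group $A$, to equip $\overline{A}$ with an acylindrically hyperbolic structure having trivial finite radical. In the \emph{spherical} case this can be done unconditionally. For an irreducible spherical Artin group of rank at least $2$ one has classically $Z(A)\cong\mathbb{Z}$ and $A$ torsion-free; by Calvez--Wiest, $\overline{A}$ acts acylindrically and non-elementarily on a hyperbolic complex (the additional length complex), so $\overline{A}$ is acylindrically hyperbolic, the dihedral case $I_2(m)$ being transparent since $A(I_2(m))/Z(A)$ is then virtually free and non-elementary, and rank $\le 1$ being the trivial case $\overline{A}=\{e\}$. Moreover $\overline{A}$ has trivial finite radical: a finite normal subgroup of $\overline{A}$ lifts to a normal subgroup $N$ of $A$ containing $Z(A)\cong\mathbb{Z}$ with finite index, and since $N$ is torsion-free it is itself infinite cyclic, say $N=\langle t\rangle$; a one-line computation with $\mathrm{Aut}(\mathbb{Z})$, using that $Z(A)$ is central and of infinite order, then forces $t\in Z(A)$, so $N=Z(A)$ and the lifted subgroup was trivial in $\overline{A}$. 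The lemma now yields that $\overline{A}$ is a domain.

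The \emph{non-spherical} case is where the real difficulty --- and the reason this is stated as a conjecture --- lies. There the same lemma applies as soon as one knows that $A$ is torsion-free and acylindrically hyperbolic: torsion-freeness kills the finite radical of $A$ and, together with the fact that an acylindrically hyperbolic group has finite center, forces $Z(A)=\{e\}$, so $\overline{A}=A$ and the lemma finishes. Both hypotheses are known for several families --- for instance the two-dimensional Artin groups, hence those of large type --- and for each such family the plan goes through word for word; in some cases one can also replace the appeal to acylindrical hyperbolicity by a more combinatorial argument bounding centralizers of normal subgroups through the splitting of $A$ as a tree of standard parabolic subgroups (Bass--Serre theory). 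The main obstacle to the full conjecture is therefore exactly the still-open assertion that \emph{every} irreducible non-spherical Artin group is torsion-free and acylindrically hyperbolic --- equivalently, that an irreducible Artin group is never a nontrivial direct product and always behaves hyperbolically modulo its center. This strategy reduces the conjecture to that assertion, and establishes it unconditionally for the spherical groups and for every family where the assertion is known.
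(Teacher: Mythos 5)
Since this is a conjecture, neither you nor the paper actually proves it; what you give is exactly the paper's reduction (the conjecture follows from Theorem~\ref{main_th1} wherever the irreducible Artin group modulo its center is known to be acylindrically hyperbolic with trivial finite radical, which at present covers the spherical, affine, large, two-dimensional and FC cases), and you correctly flag the remaining gap as the open question of whether this holds for \emph{all} irreducible non-spherical Artin groups. Within that shared strategy, however, your proof of the key lemma is genuinely different from both proofs in the paper. You first reformulate ``domain'' as ``every nontrivial normal subgroup has trivial centralizer,'' then use Osin's result that an infinite (equivalently, s-normal) normal subgroup of an acylindrically hyperbolic group acts non-elementarily, hence contains a loxodromic WPD element $h$, and finish via $C_G(N)\le C_G(h)\le E(h)$ together with the almost malnormality of $E(h)$ to conclude that $C_G(N)$ is a finite normal subgroup, hence trivial. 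The paper instead gives two other proofs: one quoting Andr\'e--Fruchter's result that $G$ and $G*F_2$ share the same $\exists\forall$-theory (the non-domain sentence is $\exists\forall$ and clearly fails in $G*F_2$), and one combining Hull's Lemma~5.6 (existence of three pairwise independent loxodromics with $M(g_i)=\langle g_i\rangle$) with Abbott--Dahmani's Lemma~3.3. Your route is more structural and yields, as a by-product, the cleaner characterization of non-domains via centralizers of normal subgroups; the paper's first proof is shorter but black-boxes a heavy theorem, and its second is closest in spirit to yours but avoids invoking Osin's s-normality machinery. You also give an alternative argument that $A/Z(A)$ has trivial finite radical for spherical $A$ (lift to a torsion-free virtually cyclic normal subgroup $N$ of $A$ containing $Z(A)$, identify $N\cong\mathbb Z$, and use centrality of $Z(A)$ to force $N=Z(A)$), whereas the paper quotes Bestvina's Theorem~4.13 applied to $\mathcal G=A/\langle\Delta^2\rangle$ and, when $\Delta$ is central, lifts through $\mathcal G/C_2$; your argument is more elementary and self-contained. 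One small point worth tightening: when you cite ``a theorem of Osin,'' you should give the precise reference (e.g.\ Osin, \emph{Acylindrically hyperbolic groups}, Lemma~7.2 or Theorem~2.24, via the observation that normal implies s-normal), since the statement you need is that the loxodromic element can be taken loxodromic for the $G$-action, not merely that $N$ is abstractly acylindrically hyperbolic.
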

	
	Notice that it is conjectured that every irreducible Artin group that is not spherical (i.e., the corresponding Coxeter group is not finite) has a trivial center. So the conjecture above could be read as: for every irreducible Artin group $A$, either $A$ is not spherical, in which case it is a domain, or $A$ is spherical and $A/Z(A)$ is a domain.

\medskip

	In order to provide evidence for our Conjecture~\ref{first_conj}, we prove a result of independent interest, in the context of acylindrically hyperbolic groups:

	\begin{theorem}\label{main_th1} Let $G$ be an acylindrically hyperbolic group without a non-trivial finite normal subgroup. Then $G$ is a domain.
\end{theorem}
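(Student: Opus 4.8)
The plan is to prove the formally stronger (and in fact equivalent) statement that for every nontrivial normal subgroup $N \trianglelefteq G$ the centralizer $C_G(N)$ is trivial. This suffices to conclude that $G$ is a domain: given $x, y \in G \setminus \{e\}$, put $N = \langle\langle y \rangle\rangle$, the normal closure of $y$ in $G$; this is a nontrivial normal subgroup, and the conjugates $y^g$ (for $g \in G$) generate it, so $C_G(N) = \bigcap_{g \in G} C_G(y^g)$. Hence $x \notin C_G(N)$ yields some $g$ with $x \notin C_G(y^g)$, i.e.\ $[x, y^g] \neq e$, which is exactly the domain condition. (Conversely, if $c \in C_G(N) \setminus \{e\}$ and $y \in N \setminus \{e\}$, then $[c, y^g] = e$ for all $g$ since each $y^g$ lies in $N$, so $G$ is not a domain; this is why the two statements are equivalent, though we only need the first implication.)

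To prove that $C_G(N)$ is trivial for every nontrivial normal $N$, I would invoke two standard structural results on acylindrically hyperbolic groups. First, by Dahmani--Guirardel--Osin, $G$ possesses a unique maximal finite normal subgroup $K(G)$; the hypothesis on $G$ says precisely that $K(G) = \{e\}$, so \emph{every} nontrivial normal subgroup of $G$ is infinite. Second, by a theorem of Osin, every infinite normal subgroup of an acylindrically hyperbolic group is itself acylindrically hyperbolic. Together these give: every nontrivial normal subgroup of $G$ is acylindrically hyperbolic.

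Now fix a nontrivial normal $N \trianglelefteq G$. The center $Z(N)$ is characteristic in $N$, hence normal in $G$; it cannot be infinite, for an infinite $Z(N)$ would be an infinite abelian — hence non-acylindrically-hyperbolic — normal subgroup of the acylindrically hyperbolic group $N$, contradicting Osin's theorem. So $Z(N)$ is a finite normal subgroup of $G$, whence $Z(N) \subseteq K(G) = \{e\}$. Let $C := C_G(N)$, which is normal in $G$ because $N$ is. Since $N \cap C = Z(N) = \{e\}$ and $N$, $C$ commute elementwise, the subgroup $NC$ is the internal direct product $N \times C$, and it is normal in $G$. If $C$ were infinite, then $N \times C$ would be an infinite normal subgroup of $G$, hence acylindrically hyperbolic by Osin's theorem; but an acylindrically hyperbolic group never decomposes as a direct product of two infinite groups (a standard fact — e.g.\ two commuting loxodromic elements, one taken from each factor, would share the same pair of endpoints at infinity and hence both lie in the virtually cyclic elementary subgroup of one of them, yet they generate a copy of $\mathbb{Z}^2$). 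Since $N$ is infinite, $C$ must be finite; being normal in $G$, we conclude $C = C_G(N) \subseteq K(G) = \{e\}$, as required.

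The only genuine input, beyond the elementary reduction in the first paragraph, is the package of structural facts about normal subgroups of acylindrically hyperbolic groups — existence of the finite radical, acylindrical hyperbolicity of infinite normal subgroups, and non-decomposability as a direct product of two infinite groups. These are all in the literature (Dahmani--Guirardel--Osin; Osin), so the only real care needed is to cite the theorem "infinite normal subgroups are acylindrically hyperbolic" in a form without extraneous hypotheses, and to apply the direct-product obstruction to the (acylindrically hyperbolic) subgroup $N \times C_G(N)$ of $G$ rather than to $G$ itself. I expect this last point — pinning down references so the cited statements apply verbatim — to be the only delicate aspect; the rest is formal bookkeeping.
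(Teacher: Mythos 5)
Your argument is correct, but it follows a genuinely different route from the paper. The paper gives two proofs: one model-theoretic (if $G$ were not a domain it would satisfy an $\exists\forall$-sentence, which is transferred to $G * F_2$ via Andr\'e--Fruchter and then refuted there), and one geometric (Hull's small cancellation lemma produces three loxodromics $g_1,g_2,g_3$ with $M(g_i)=\langle g_i\rangle$ and pairwise trivially intersecting, and Abbott--Dahmani then gives $[x, g_i^n y g_i^{-n}]\neq e$ for large $n$). You instead reduce the domain condition to the triviality of $C_G(N)$ for every nontrivial normal $N$ --- a clean and correct reduction, since $\bigcap_g C_G(y^g)=C_G(\langle\langle y\rangle\rangle)$ --- and then run the structural theory of normal subgroups of acylindrically hyperbolic groups: Osin's theorem that s-normal (in particular infinite normal) subgroups are again acylindrically hyperbolic forces $Z(N)$ to be finite, hence trivial by hypothesis, so $N C_G(N)\cong N\times C_G(N)$ is normal; the fact that an acylindrically hyperbolic group is not a direct product of two infinite groups then forces $C_G(N)$ to be finite, hence trivial. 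All the ingredients you cite hold in exactly the generality you need (you do not even need the Dahmani--Guirardel--Osin finite radical $K(G)$: the hypothesis already says every finite normal subgroup is trivial, so that invocation is harmless but redundant). What your approach buys is an entirely algebraic argument resting on one black box (Osin's s-normality result plus the direct-product obstruction), avoiding both the first-order transfer principle and the explicit loxodromic/small-cancellation construction; what the paper's second proof buys is an effective statement (an explicit conjugator $g_i^n$) that is closer in spirit to how acylindrical hyperbolicity is usually exploited.
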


	Theorem ~\ref{main_th1} is relevant for our purposes as several classes of Artin groups are known to be acylindrically hyperbolic. In fact, relying on \cite{alg_geom_over_groups3}, we can deduce:

	\begin{corollary} If a class $\mathcal{C}$ of Artin groups is such that its irreducible members are torsion-free and acylindrically hyperbolic, then the model theory of Artin groups of type $\mathcal{C}$ reduces to the model theory of its irreducible components, i.e., the conclusions of Fact~\ref{reduction_to_components_fact} apply. In particular, this happens for non-spherical Artin groups of FC type, non-spherical two-dimensional Artin groups (see \cite{ChDav}) and affine Artin groups (see Corollary~\ref{affine_dom}).
\end{corollary}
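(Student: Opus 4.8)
The plan is to deduce the corollary directly from Theorem~\ref{main_th1} and Fact~\ref{reduction_to_components_fact}. By the latter it suffices to show that every Artin group $A$ of type $\mathcal{C}$ is a finite direct product of \emph{domains}. Since the defining graph $\Gamma$ is finite, we may write $A=A_1\times\cdots\times A_n$ for the unique decomposition of $A$ into irreducible components; each $A_i$ is itself an Artin group, the one associated with a connected component of the label graph of $A$, and so it is one of the irreducible members of $\mathcal{C}$ --- hence torsion-free and acylindrically hyperbolic by hypothesis. (Here ``of type $\mathcal{C}$'' must be read so that all irreducible components of a member again belong to $\mathcal{C}$; this is automatic for the concrete classes in the statement, since membership is cut out by conditions local to connected components of the defining graph.)

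The one substantive step is that each such $A_i$ is a domain, and this is precisely what Theorem~\ref{main_th1} provides: a torsion-free group has no non-trivial finite subgroup, a fortiori no non-trivial finite normal subgroup, so a torsion-free acylindrically hyperbolic group is a domain. Feeding the decomposition $A=A_1\times\cdots\times A_n$ into Fact~\ref{reduction_to_components_fact} then yields conclusions (i)--(iv) for $A$, which is exactly the assertion that the model theory of $A$ reduces to the model theory of its irreducible components.

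For the three families in the ``in particular'' clause one only has to verify, for their irreducible members, the two hypotheses ``torsion-free'' and ``acylindrically hyperbolic''. Torsion-freeness follows from $K(\pi,1)$-type and cohomological-dimension results: Artin groups of FC type and two-dimensional Artin groups admit finite-dimensional classifying spaces and so are torsion-free, and the affine case is covered by Corollary~\ref{affine_dom}. Acylindrical hyperbolicity is imported from the geometric literature: irreducible non-spherical FC-type Artin groups are acylindrically hyperbolic, the two-dimensional non-spherical case is \cite{ChDav}, and the affine case is again Corollary~\ref{affine_dom}. Note that in each family the ``irreducible'' and ``non-spherical'' hypotheses are exactly what rule out the cyclic and the spherical dihedral Artin groups, which are not acylindrically hyperbolic and in fact fail to be domains because of their infinite centres --- so the hypotheses of the corollary are sharp.

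Thus no serious obstacle remains once Theorem~\ref{main_th1} is granted: the corollary is a short combination of the direct-product decomposition of Artin groups with Fact~\ref{reduction_to_components_fact}, and all of its weight is carried on one side by Theorem~\ref{main_th1} and on the other by the cited acylindrical-hyperbolicity statements for FC-type, two-dimensional, and affine Artin groups. The only point deserving a line of care is the bookkeeping described above, ensuring that the irreducible components $A_i$ genuinely inherit membership in $\mathcal{C}$.
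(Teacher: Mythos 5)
Your proof is correct and follows exactly the intended route: decompose $A$ into irreducible components, observe that torsion-freeness rules out non-trivial finite normal subgroups so Theorem~\ref{main_th1} makes each component a domain, and then invoke Fact~\ref{reduction_to_components_fact}. One small citation point: \cite{ChDav} (Charney--Davis) is the reference for the $K(\pi,1)$ problem and hence torsion-freeness of two-dimensional and FC-type Artin groups, not for their acylindrical hyperbolicity — the latter for the two-dimensional case is due to Vaskou \cite{Vas22} — but this does not affect the substance of the argument.
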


    This also allows us to treat the spherical case, although in this case we have to mod out the center (which is known to be infinite cyclic in these groups).

    \begin{corollary}\label{spherical_domain} Irreducible spherical Artin groups modulo their center are domains.
    \end{corollary}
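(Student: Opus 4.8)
The plan is to deduce this from Theorem~\ref{main_th1}. Given an irreducible spherical Artin group $A$, I would show that $\bar A := A/Z(A)$ is acylindrically hyperbolic and has no non-trivial finite normal subgroup, and then quote Theorem~\ref{main_th1} to conclude that $\bar A$ is a domain. First I would dispose of the rank-$1$ case, where $A \cong \mathbb{Z} = Z(A)$, so that $\bar A$ is the trivial group and is vacuously a domain. Hence from now on I may assume $A$ has rank at least $2$, and recall (Brieskorn--Saito, Deligne) that then $A$ is torsion-free and $Z(A)$ is infinite cyclic, generated by a suitable power of the Garside element.

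For acylindrical hyperbolicity of $\bar A$ I would invoke the theorem of Calvez and Wiest: the central quotient of any irreducible spherical-type Artin group that is not infinite cyclic acts acylindrically and non-elementarily on the (hyperbolic) additional length complex, and is therefore acylindrically hyperbolic. For instance $B_3/Z(B_3) \cong \mathrm{PSL}_2(\mathbb{Z}) \cong \mathbb{Z}/2 \ast \mathbb{Z}/3$, and $A(I_2(4))/Z \cong \mathbb{Z} \ast \mathbb{Z}/2$.

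It then remains to check that $\bar A$ has trivial finite radical, and here I would argue directly. Suppose $N$ is a finite normal subgroup of $\bar A$; let $\pi \colon A \twoheadrightarrow \bar A$ be the quotient map and $\widetilde N := \pi^{-1}(N) \supseteq Z := Z(A)$, a normal subgroup of $A$. Since $\widetilde N/Z \cong N$ is finite and $Z \cong \mathbb{Z}$, the group $\widetilde N$ is finitely generated and virtually infinite cyclic; being a subgroup of the torsion-free group $A$ it is torsion-free, and a finitely generated torsion-free virtually cyclic group is infinite cyclic (Stallings--Swan). Writing $\widetilde N = \langle t \rangle$ and $Z = \langle t^{k} \rangle$ with $k \geq 1$, conjugation by any $a \in A$ induces an automorphism of $\widetilde N \cong \mathbb{Z}$ fixing the central element $t^{k}$, hence the identity automorphism; so $a$ centralizes $\widetilde N$. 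Thus $\widetilde N \leq Z(A) = Z$, and therefore $N = \widetilde N/Z$ is trivial. Combining this with the previous step, Theorem~\ref{main_th1} applies to $\bar A$.

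The real content of the argument is carried by Theorem~\ref{main_th1} and by the Calvez--Wiest theorem; the one step that needs care is the finite-radical computation, since $\bar A$ typically \emph{does} have torsion, as the examples above show, so torsion-freeness of $\bar A$ is unavailable and one must exploit torsion-freeness of $A$ instead. Beyond correctly citing these inputs I do not expect a serious obstacle.
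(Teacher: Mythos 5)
Your proof is correct, and the inputs you rely on (Theorem~\ref{main_th1} plus Calvez--Wiest acylindrical hyperbolicity of the central quotient) are the same ones the paper uses. The one place where you genuinely diverge is the finite-radical step. The paper handles it by citing a result of Bestvina (\cite[Theorem 4.13]{npc_art_fin}): the group $\mathcal{G}=A/\langle\Delta^2\rangle$ has no nontrivial finite normal subgroup, and then the two cases $Z(A)=\langle\Delta^2\rangle$ or $Z(A)=\langle\Delta\rangle$ are treated via the Third Isomorphism Theorem, lifting a putative finite normal subgroup of $A/Z(A)$ to one of $\mathcal{G}$. You instead give a short self-contained argument: pull the finite normal subgroup $N\leq A/Z(A)$ back to $\widetilde N\trianglelefteq A$ containing $Z(A)\cong\mathbb{Z}$ with finite index; since $A$ is torsion-free, $\widetilde N$ is a finitely generated torsion-free virtually cyclic group, hence infinite cyclic; and the $A$-conjugation action on $\widetilde N\cong\mathbb{Z}$ fixes the nontrivial element $\Delta^{k}$, so it must be trivial, forcing $\widetilde N\leq Z(A)$ and $N=1$. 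This is a cleaner and more elementary substitute for the Bestvina citation and the case split, and it avoids needing to know anything specific about the Garside element beyond it generating the center. You are also slightly more careful than the paper in splitting off the rank-one case $A\cong\mathbb{Z}$, where $A/Z(A)$ is trivial and Calvez--Wiest (and acylindrical hyperbolicity in general) does not literally apply, but the conclusion is vacuous.
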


	Despite a full reduction of the model theory of $A$ to its irreducible components requires the assumption of triviality of the center of the components $A_i$, in certain instances, Theorem~\ref{main_th1} and Fact~\ref{reduction_to_components_fact} can actually be used to deal also with cases in which the center is non-trivial (i.e., the spherical case). Most notably, this will be used in Section~\ref{intro_sub_spherical} to show the first-order rigidity of spherical Artin groups among spherical Artin groups.

\subsection{Superstability}

	The second line of investigation of this paper is on superstability. The property of superstability is one of the main ``diving lines'' in model theory, with very strong structural consequences, e.g., its negation implies the maximal number of models in every uncountable cardinality. Many abelian groups are superstable \cite{rogers} (e.g., the finitely generated free ones), while for example non-abelian free groups are not superstable. Thus, it is a fundamental problem to understand which Artin groups are superstable. In this respect, we conjecture \mbox{that only the obvious ones are, namely:}

	\begin{conjecture} An Artin group is superstable if and only if it is abelian.
\end{conjecture}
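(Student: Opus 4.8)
The plan is to treat the two implications of the biconditional separately; the "only if'' direction is the substantial one and remains conjectural in general, so for it we aim only at the reduction announced in the abstract. The "if'' direction is routine: $A(\Gamma)$ is abelian exactly when all its labels equal $2$, in which case $A(\Gamma)\cong\mathbb{Z}^{|S|}$, and $\mathbb{Z}^n$ is superstable because every finitely generated abelian group is (see \cite{rogers}). One could alternatively deduce the superstability of $\mathbb{Z}^n$ from Fact~\ref{reduction_to_components_fact}(iii) applied at all sufficiently large $\lambda$, starting from the superstability of $\mathbb{Z}$; note however that $\mathbb{Z}$ is not a domain, so this uses the elementary instance of that preservation statement rather than the full theorem.

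For "$A$ non-abelian $\Rightarrow$ $A$ not superstable'' I would argue by contraposition, first reducing to irreducible components. Writing $A=A_1\times\cdots\times A_n$ with the $A_i$ irreducible, some $A_i$ is non-abelian; and when the $A_i$ are domains, Fact~\ref{reduction_to_components_fact}(iii), applied at all $\lambda\geq 2^{\aleph_0}$, shows that $A$ is superstable iff each $A_i$ is. So it suffices to treat $A$ irreducible and non-abelian; in the non-spherical cases of interest the needed domain property follows from Theorem~\ref{main_th1}, and the spherical case is handled via Corollary~\ref{spherical_domain} together with the observation that the center is definable, so that one may pass to $A/Z(A)$. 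Such an $A$ has an edge in its defining graph with label $m=m(s,t)\in\{3,4,\dots\}\cup\{\infty\}$, so (van der Lek) the standard parabolic $A_{\{s,t\}}=\langle s,t\rangle$ is the dihedral Artin group $A(m)$ (with the convention $A(\infty)=F_2$), and this group is \emph{not} superstable: for $m=\infty$ this is the classical non-superstability of non-abelian free groups, and for $3\leq m<\infty$ the center $Z$ of $A(m)$ is infinite cyclic and hence definable, so the virtually non-abelian-free quotient $A(m)/Z$ (which for odd $m$ is $\mathbb{Z}/2\ast\mathbb{Z}/m$) is interpretable in $A(m)$, and this reduces the finite case to the free one by a standard argument.

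The heart of the matter is to transfer non-superstability from the parabolic $A_{\{s,t\}}$ up to $A$, which is not automatic because $A_{\{s,t\}}$ need not be a definable subgroup of $A$. This is exactly where the hypothesis highlighted in the abstract enters: one reduces the superstability of the non-abelian group $A$ to the assertion that, for all sufficiently large primes $n$, the relevant dihedral parabolic subgroups are \emph{$n$-pure} in $A$, i.e.\ an element of such a parabolic that is an $n$-th power in $A$ is already an $n$-th power inside the parabolic. Under $n$-purity the first-order predicates expressing that $x$ is an $n$-th power, when computed in $A$, restrict to $A_{\{s,t\}}$ exactly as they are computed there; and since the non-superstability of $A(m)$ is visible using only the group operation together with such root predicates — the torsion generators of $A(m)/Z$ being precisely $n$-th roots of central elements — the corresponding witness (for instance, a configuration realizing a type that forks over no finite set) can be reproduced inside $A$, so $A$ is not superstable. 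Conversely, if $A$ were a superstable non-abelian Artin group, some such parabolic would have to fail $n$-purity for cofinitely many primes $n$, which is the sense in which the superstability problem for $A$ \emph{reduces} to these purity questions.

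The main obstacle, and the reason only this reduction rather than the full biconditional is a theorem, is to establish $n$-purity of the dihedral parabolics. For spherical, FC-type, two-dimensional and affine Artin groups this should be accessible from the known structure of parabolic subgroups and the behaviour of roots in dihedral Artin groups (torsion-freeness, uniqueness of roots up to the center, control of centralizers); in full generality it is entangled with open conjectures on Artin groups (torsion-freeness, the $K(\pi,1)$ property, the well-behavedness of parabolic subgroups). A secondary technical point is to make the transfer argument uniform in $n$, so that it genuinely contradicts superstability and not merely $\omega$-stability.
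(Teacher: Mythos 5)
Your ``if'' direction is fine: abelian Artin groups are exactly the free abelian ones, which are superstable. For the ``only if'' direction you correctly aim at the reduction stated in Theorem~\ref{main_th2}, and you correctly identify the key ingredients (dihedral parabolics, $n$-purity). But the mechanism you propose is not the one that makes this work, and along the way you introduce a dependency the paper does not have. The detour through irreducible components is both unnecessary and harmful: Theorem~\ref{main_th2} is proved directly for the whole group $A$ (via Corollary~\ref{cor_super_tech}, taking $H = \langle a, b\rangle_A$ and $N = Z(H)$), whereas your route through Fact~\ref{reduction_to_components_fact}(iii) requires the irreducible factors to be domains, which is precisely the open Conjecture~\ref{first_conj}; so your version of the reduction would come out conditional on that conjecture.

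More fundamentally, the transfer step is where all the work lies and your sketch asserts it rather than proves it. Non-superstability of a subgroup $H\leq G$ does not lift to $G$, even under $n$-purity: $H$ is not a definable subset of $G$, and the interpretation of $A(m)/Z$ inside $A(m)$ that you invoke quantifies over all of $A(m)$, so it does not restrict to an interpretation in $A$. The paper never establishes non-superstability of $A(m)$ at all. Instead, Theorem~\ref{general_criterion} is a self-contained criterion for non-superstability of the \emph{ambient} group $G$, phrased in terms of a pair $N\trianglelefteq H\leq G$ and existential formulas $\varphi_m(x,\bar y)=\exists z\,(x=w_m(z,\bar y))$ where $w_m$ is a nested word built from $n$-th powers. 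The role of $n$-purity (Lemma~\ref{intermediate_lemma}) is to force, inductively, the solution sets $C^\eta_b$ to land in $H$ with bounded image in $H/N$; the role of $H/N$ being a non-virtually-abelian free product of cyclic groups (Lemma~\ref{dihedral_artin}, Proposition~\ref{simon_prop}) is to produce the sequence $(a_\ell)$ needed for the branching clause of Definition~\ref{def_unsuper}. So the formulas are evaluated in $A$ throughout; nothing is lifted from $H$. Also, your remark that ``the torsion generators of $A(m)/Z$ are precisely $n$-th roots of central elements'' conflates the large prime $n$ used for $n$-purity with the orders of torsion elements of $A(m)/Z$: in the paper $n$ is chosen larger than the index of a free normal subgroup of $H/N$, and plays no role as a torsion order.
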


	To the best of our knowledge, there are two main techniques to show the non-superstability of a finitely generated group $G$. The first one is to show that $G$ is residually finite and not soluble-by-finite  \cite{houcine}. The second one is to show that $G$ contains a non-degenerate hyperbolically embedded subgroup \cite[Theorem~8.1]{acy_hyper} (notice that the latter property is implied by acylindric hyperbolicity). For various classes of Artin groups there are fundamental results concerning these properties (see e.g. \cite{Vas22, Hae22, CMMW, BGMPP, Jank22}) but  establishing either one of these two properties for all (or even ``most'') Artin groups is considered to be \mbox{out of reach at the present time.}
	
	\medskip In this paper, we follow another route, more specific to the case at hand and which will lead to stronger results. Specifically, we prove a technical adaptation of the non-supersatiblity criterion isolated by the second named author et al. in\ \cite{MPS22} and we use it reduce the problem of non-superstability to a problem on roots.

	\begin{theorem}\label{main_th2} Let $(A, S)$ be a non-abelian Artin system. Then the problem of non-superstability of $A$ reduces to: there is an edge $\{a, b\} \subseteq S$ with label $\geq 3$ such that the parabolic subgroup $\langle a, b \rangle_A$ is $n$-pure in $A$ for a certain \mbox{large enough prime $n \in \mathbb{N}$.}
\end{theorem}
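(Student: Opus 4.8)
The plan is to reproduce inside $A$ the combinatorial configuration that witnesses non-superstability in \cite{MPS22}, using the dihedral parabolic $\langle a,b\rangle_A$ as its seed and using the $n$-purity hypothesis to certify that this configuration does not degenerate when it is read inside $A$ rather than inside the parabolic. So assume that $\{a,b\}\subseteq S$ with $m:=m(a,b)\in[3,\infty)$ and that $\langle a,b\rangle_A$ is $n$-pure in $A$ for a suitably large prime $n$; we must show that $A$ is not superstable. (If no edge of $S$ carries a label $\geq 3$ then $A$ is right-angled, so the implication is vacuous; non-superstability of the non-abelian right-angled Artin groups then follows from Theorem~\ref{main_th1}, Fact~\ref{reduction_to_components_fact} and \cite[Theorem~8.1]{acy_hyper}.) By van der Lek's theorem the inclusion realizes $H:=\langle a,b\rangle_A$ as the dihedral Artin group on $(a,b)_m=(b,a)_m$.

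First I would record the structure of $H$. A one-relator manipulation presents $H$, for $m$ even, as $\langle x,y\mid [y^{m/2},x]=e\rangle$ with $x=a$, $y=ab$, and, for $m$ odd, as $\langle x,y\mid x^2=y^m\rangle$ with $x=\Delta:=(a,b)_m$, $y=ab$; in particular $Z(H)$ is infinite cyclic, generated by $z:=y^{m/2}=\Delta$ when $m$ is even and by $z:=x^2=y^m=\Delta^2$ when $m$ is odd, and $H/Z(H)$ is $\mathbb{Z}\ast\mathbb{Z}/(m/2)$, resp.\ $\mathbb{Z}/2\ast\mathbb{Z}/m$ — in either case a finitely generated, residually finite, non-elementary virtually free group, hence not superstable by \cite{houcine} (or by \cite[Theorem~8.1]{acy_hyper}). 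Since $Z(H)$ is definable in $H$, the quotient $H/Z(H)$ is interpretable in $H$, so $H$ itself is not superstable. Finally, writing $\phi\colon A\to\mathbb{Z}$ for the exponent-sum homomorphism (well defined because Artin relators are length-homogeneous), one has $\phi(z)\in\{m,2m\}$, so $z$ is not an $n$-th power in $A$ for any prime $n>2m$.

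The engine is the non-superstability criterion of \cite{MPS22}, adapted as follows. Applied to $H$, it produces an explicit witness to the non-superstability of $H$ — an infinite strictly descending chain of definable subgroups, each of infinite index in the one before it, which is impossible in a superstable group (where a strictly descending chain of definable subgroups all of whose successive indices are infinite has finite length) — and I would arrange that the defining formulas of this chain probe $H$ only through the group law and through solution sets of equations of the shape $x^n=h$, for a suitably large prime $n$ (in particular $n$ prime with $n>2m$, so that $z$ is not an $n$-th power in $A$ and $n$ is invertible modulo the torsion of $H/Z(H)$) and parameters $h\in H$. Then $n$-purity of $H$ in $A$ — every $g\in A$ with $g^n\in H$ lies in $H$ — guarantees that, for each $h\in H$, the equation $x^n=h$ has the same solution set in $A$ as in $H$, so the entire witness, re-read with the same formulas and parameters inside $A$, remains a valid witness: none of its indices collapse, and $A$ is not superstable. (Should the criterion also require $\langle z\rangle$, or $H$ itself, to be a definable subgroup of $A$, that is a secondary matter, obtainable from $n$-purity together with $\phi$ and known descriptions of centralizers of central elements of spherical parabolics; in any event the individual powers $z^k$ may be fed into the formulas as parameters.)

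The main obstacle is precisely this adaptation of the \cite{MPS22} criterion. In \cite{MPS22} the analogue of $\langle a,b\rangle_A$ sits in the ambient group as a retract, so its first-order theory is under control essentially for free; here $\langle a,b\rangle_A$ is only a parabolic subgroup and need not be a retract of $A$, so one must revisit the construction of the non-superstability witness and extract a form of it that is \emph{equational in $n$-th powers} — assembled solely from the group operation and from equations $x^n=h$ with $h$ ranging over $H$ — in order to be able to transport it from $H$ to $A$ using $n$-purity. The remaining ingredients — the one-relator descriptions of $H/Z(H)$, the computation of $\phi(z)$, and the definability bookkeeping — are routine by comparison.
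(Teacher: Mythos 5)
Your high-level strategy is the right one and matches the paper's: focus on the dihedral parabolic $H=\langle a,b\rangle_A$, observe that $H/Z(H)$ is a non-elementary free product of cyclic groups, and use $n$-purity of $H$ in $A$ (for a suitable prime $n$) to transport a non-superstability witness, built only from the group law and $n$-th power equations, from $H$ to $A$. However, the proposal stops at exactly the point where the paper does its real work, and the description of the criterion you intend to adapt is not accurate.

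First, the technical core of the paper is precisely the ``adaptation'' that you flag as the main obstacle and then leave to the reader. Theorem~\ref{general_criterion} is that adaptation: it replaces the retract hypothesis of \cite{MPS22} by the two hypotheses (a) $n$-th roots of elements of $H$ stay in $H$ with at most $k$ classes mod $N$, and (b) there are infinitely many $a_\ell\in H\setminus N$ with $a_{\ell_1}^{-1}a_{\ell_2}N\notin\{x^ny^nN : x,y\in H\}$. The proof then builds the tree-indexed configuration from Definition~\ref{def_unsuper} using the group words $w_0(z)=z$, $w_{m+1}(z,\bar y)=w_m(y_m z^n,\bar y\restriction m)$, and the formulas $\varphi_m(x,\bar y):\exists z\,(x=w_m(z,\bar y))$. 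Lemma~\ref{intermediate_lemma} then reduces (a) to $n$-purity plus uniqueness of $n$-th roots in the free finite-index normal subgroup $F=K/N$ of $P=H/N$, using that $n$ is a prime exceeding $[P:F]$, so the root must already lie in $F$. Your ``$n$ invertible modulo the torsion of $H/Z(H)$'' gestures at this but is not the argument; the relevant bound is $n>[P:F]$.

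Second, your description of the \cite{MPS22} criterion as producing ``an infinite strictly descending chain of definable subgroups, each of infinite index'' is not what happens. The sets defined by the $\varphi_m$'s are \emph{not} subgroups — already $\varphi_1(x,y_0)$ defines the coset $y_0\cdot\{g^n : g\in G\}$ of the (non-subgroup) set of $n$-th powers. The paper's criterion is the Shelah-style tree configuration of Definition~\ref{def_unsuper}, with parameters indexed by $\omega^{<\omega}$ and the bounded-branching condition (f); it is not the descending-chain-of-definable-subgroups formulation. If you insist on the subgroup-chain route you would need to exhibit such a chain in $A$, which your argument does not do.

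Third, and most substantively, you do not identify the crucial combinatorial input that makes the configuration branch infinitely: clause (b) of Theorem~\ref{general_criterion}, supplied by Proposition~\ref{simon_prop}. This uses that for $n$ large the quotient $B/B^n$ of a non-elementary free product of cyclic groups $B=H/Z(H)$ is infinite (a deep fact drawing on \cite{ivanOlsh} and \cite{coulon}), which yields the required infinite family of elements $a_\ell$ that remain pairwise separated modulo products of two $n$-th powers. Your exponent-sum argument (showing $z=\Delta$ or $\Delta^2$ is not an $n$-th power in $A$ for $n>2m$) is true but does not substitute for this: a single non-$n$-th-power gives no branching. Without Proposition~\ref{simon_prop} (or an equivalent), the witness does not get off the ground. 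In short, you have the right skeleton, but the proposal omits the adapted criterion itself, misstates the form of the witness, and misses the key infinitude input; these are not ``routine bookkeeping'' but the content of the theorem.
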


An Artin group $A(\Gamma)$ is said to be of FC type if, for every induced subgraph $\Gamma'$ of $\Gamma$ with no edges labeled by $\infty$, the Artin group $A(\Gamma')$ is spherical.
An Artin group $A(\Gamma)$ is said to be of even type if all edges of $\Gamma$ have an even label (or $\infty$).
Additionally, an Artin group $A(\Gamma)$ is said to be of large type if all edges of $\Gamma$ have a label $\geq 3$ (or $\infty$). The previous theorem, together with the results of \cite{CGGW}, \cite{CMV} and \cite{AntoFon} respectively, yields the following:

\begin{corollary}\label{cor_super}
    Let $A$ be a non-abelian Artin group of one of the following types: spherical, large, or even FC type. Then $A$ is not superstable.
\end{corollary}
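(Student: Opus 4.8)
The plan is to combine Theorem~\ref{main_th2} with a case-by-case appeal to the structural results already cited in the paper. By Theorem~\ref{main_th2}, for a non-abelian Artin system $(A,S)$ it suffices to exhibit an edge $\{a,b\} \subseteq S$ with label $\geq 3$ such that the dihedral parabolic $\langle a, b\rangle_A$ is $n$-pure in $A$ for some sufficiently large prime $n$. So the first step is to observe that in each of the three cases (spherical, large, even FC type) the hypothesis that $A$ is \emph{non-abelian} forces the existence of at least one edge with label $\geq 3$: indeed, if every edge had label $2$ the group would be a RAAG on a complete graph, hence a free abelian group, contradicting non-abelianity. Fix such an edge $\{a,b\}$.

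The second step is to verify $n$-purity of $\langle a,b\rangle_A$ in $A$ for large primes $n$, and this is exactly where the three cited references enter. For the \emph{large type} case one invokes \cite{AntoFon} (Antol\'in--Foniqi, or whichever purity/root statement there establishes that parabolic subgroups of large-type Artin groups are $n$-pure, or more strongly that parabolics are closed under roots). For the \emph{even FC type} case one invokes \cite{CMV}, and for the \emph{spherical} case one invokes \cite{CGGW}. In each case the cited result should give that the dihedral parabolic $\langle a,b\rangle_A$ is a parabolic subgroup which is ``root-closed'' or at least $n$-pure in $A$ for all but finitely many primes $n$, which is precisely the hypothesis needed to apply Theorem~\ref{main_th2} and conclude non-superstability. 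One should take care to match the precise notion of $n$-purity used in Theorem~\ref{main_th2} (presumably: $x^n \in \langle a,b\rangle_A \Rightarrow x \in \langle a,b\rangle_A$) with the statements available in those papers, possibly passing through the fact that dihedral Artin groups $\langle a,b\rangle$ with label $\geq 3$ have well-understood root structure.

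I would present the argument as three short paragraphs, one per type, each reducing to the corresponding citation, preceded by the common reduction to the existence of a high-label edge. I expect the main obstacle to be purely bookkeeping: ensuring that the external results are quoted in exactly the form Theorem~\ref{main_th2} requires (the same meaning of ``$n$-pure'', the same class of parabolic subgroups, and the uniformity in $n$ over large primes). A secondary subtlety is the case where the chosen parabolic $\langle a,b\rangle_A$ might not itself be ``large'' or ``even'' in the relevant sense inside $A$ — but since a single edge subgraph is automatically both of even type (if its label is even) and of large type (label $\geq 3$), and is spherical (it is a dihedral Artin group, type $I_2(m)$), this should cause no real trouble: the dihedral parabolic is always of all the required types, so the ambient structural hypothesis on $A$ only needs to supply the purity of parabolics, not any property of the sub-edge itself.
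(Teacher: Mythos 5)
Your overall architecture is exactly the paper's: Corollary~\ref{cor_super} is stated in the introduction as an immediate consequence of Theorem~\ref{main_th2} together with \cite{CGGW}, \cite{CMV} and \cite{AntoFon}, and no further proof is given in the body, so ``reduce to an edge of label $\geq 3$ and quote the purity results'' is indeed the intended argument. Two points, however, deserve correction.

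First, you have swapped two of the citations. The paper's ``respectively'' pairs the spherical case with \cite{CGGW} (parabolic subgroups of spherical Artin--Tits groups), the \emph{large-type} case with \cite{CMV} (Cumplido--Martin--Vaskou, \emph{Parabolic subgroups of large-type Artin groups}), and the \emph{even FC} case with \cite{AntoFon} (Antol\'in--Foniqi, \emph{Subgroups of even Artin groups of FC type}). You assigned \cite{AntoFon} to large type and \cite{CMV} to even FC type.

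Second, your preliminary reduction --- ``if every edge had label $2$ the group would be a RAAG on a complete graph, hence free abelian'' --- is not correct under the paper's conventions, and the gap is not purely cosmetic. Labels take values in $\{2,3,\dots\}\cup\{\infty\}$, and non-abelianity only rules out the case where \emph{all} labels equal $2$; it does not produce an edge with \emph{finite} label $\geq 3$. Non-abelian RAAGs (all labels in $\{2,\infty\}$, e.g.\ $F_2$) are simultaneously of large type and of even FC type under the definitions in this paper, yet have no edge with finite label $\geq 3$, so your step~1 fails for those two classes exactly where you invoke it. The machinery behind Theorem~\ref{main_th2} (Lemma~\ref{dihedral_artin}, which treats $I_2(m)$ only for finite $m\geq 3$) is likewise set up for finite labels, so one must either extend the dihedral analysis to $I_2(\infty)=F_2$ (whose quotient by its trivial center is already a non-abelian free product of cyclic groups, so Proposition~\ref{simon_prop} and Lemma~\ref{intermediate_lemma} still apply, provided the $F_2$ parabolic is $n$-pure) or dispose of the all-$\infty$/RAAG-like subcase separately. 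For the spherical case your reduction is fine, since all labels are finite there.
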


	Notice that although it is conjectured that all irreducible Artin groups modulo their center are acylindrically hyperbolic, it is not presently known whether every Artin group of large type is acylindrically hyperbolic. Furthermore, it is not presently known if all
Artin groups of large type are residually finite, hence this class of Artin groups is not covered by any of the two previously mentioned main techniques to show non-superstability. The virtue of our approach (Theorem~\ref{main_th2}) is that it provides a unifying strategy to solve the superstability conjecture for Artin groups and it also reduces the problem to an arguably much easier problem than the general problem of acylindric hyperbolicity or residual finiteness.

\subsection{Spherical Artin groups}\label{intro_sub_spherical}

	The third line of investigation of the present paper is on elementary equivalence. This is a classical theme in model theory, which inspired very beautiful and deep results, such as the solution to Tarski's problem for free groups on the elementary equivalence of non-abelian free groups of different rank \cite{KM06, Sela06}. Of course, free groups are Artin groups, and in fact they are right-angled Artin groups (RAAGs), i.e., all edges are labeled by $2$ or $\infty$.
    For other results on the problem of elementary equivalence of RAAGs, see e.g. \cite{CRKR10, CRK11, CRKNG}. As promised at the beginning of the introduction, we focus here on Artin groups which are ``far from RAAGs''.

	One of the most studied classes of Artin groups is definitely the class of spherical Artin groups.
    Partial results on the problem of elementary equivalence can be found in \cite{KPV18}, where it is shown that for any of the three infinite families of irreducible spherical Artin groups, any two non isomorphic groups in the family are not elementary equivalent. We extend this result to the whole class of spherical Artin groups (not necessarily irreducible), proving the following rigidity result:

	\begin{theorem}\label{main_th3} Let $A$ and $B$ be two spherical Artin groups. Then $A$ is elementary equivalent to $B$ if and only if $A$ is isomorphic to $B$.
    In other words, spherical Artin groups are first-order rigid in the class of spherical Artin groups.
\end{theorem}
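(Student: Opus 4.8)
\emph{Proof idea.} One direction is trivial: isomorphic groups are elementarily equivalent. So assume $A\equiv B$ with $A,B$ spherical Artin groups, and aim for $A\cong B$. The strategy is to reduce to irreducible components and then to recognize each irreducible component from the first-order theory of its central quotient, using the classification of finite irreducible Coxeter groups.

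\emph{Reduction to components.} Write $A\cong\mathbb Z^{k}\times A_1\times\dots\times A_r$ and $B\cong\mathbb Z^{k'}\times B_1\times\dots\times B_{r'}$, where the $A_i,B_j$ are the irreducible components of rank $\ge2$: each is non-abelian with $Z(A_i)\cong\mathbb Z$, and $A_i/Z(A_i)$ is a non-trivial domain by Corollary~\ref{spherical_domain}. As the centre is $\emptyset$-definable, $A\equiv B$ gives $Z(A)\equiv Z(B)$ and $A/Z(A)\equiv B/Z(B)$. The former yields $\mathbb Z^{k+r}\cong\mathbb Z^{k'+r'}$ (elementarily equivalent finitely generated abelian groups are isomorphic), hence $k+r=k'+r'$. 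For the latter, $A/Z(A)\cong\prod_iA_i/Z(A_i)$ and $B/Z(B)\cong\prod_jB_j/Z(B_j)$ are finite products of non-trivial domains, so Fact~\ref{reduction_to_components_fact}(i) gives $r=r'$ and, after reindexing, $A_i/Z(A_i)\equiv B_i/Z(B_i)$; then $k=k'$ as well. It thus remains to prove: if $P,Q$ are irreducible spherical Artin groups of rank $\ge2$ with $\bar P:=P/Z(P)\equiv Q/Z(Q)=:\bar Q$, then $P\cong Q$.

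\emph{Recognizing the component.} From $\mathrm{Th}(\bar P)$, hence shared by $\bar Q$, I would read off three pieces of data. First, the abelianization $\bar P^{ab}$: it is interpretable in $\bar P$ and, being finitely generated abelian, an isomorphism invariant of $\mathrm{Th}(\bar P)$; computing it via $1\to Z(P)\to P\to\bar P\to1$ and the image in $P^{ab}=\mathbb Z^{c}$ of a generator of $Z(P)$ (a positive element of word-length $N$ or $2N$, for $N$ the number of reflections of the Coxeter group $W(P)$) shows that $\bar P^{ab}$ records the number $c\in\{1,2\}$ of odd-labelled components of the Coxeter graph and --- once one has detected whether $w_0=-1$ in $W(P)$ (visible from $2$-torsion in $\bar P$ and in $\bar P^{ab}$) --- the integer $N$. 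Second, the set of orders of finite-order elements of $\bar P$, each statement ``$\bar P$ has an element of order exactly $n$'' being first-order; by the theory of periodic (root-of-central) elements of spherical Artin groups this records the Coxeter number $h(W(P))$, and it separates the coincidental pairs of the same rank that share $(c,N)$ --- for instance the braid group $A(A_{n-1})$, whose central quotient has an element of order $n$, versus $A(D_n)$ with $n$ even, whose central quotient does not. Third, the length of a longest strictly increasing chain of centralizers in $\bar P$, a first-order notion: it is bounded for dihedral-type $P$, since $A(I_2(m))/Z$ is virtually free (so its centralizers are finite, virtually cyclic, or the whole group), but larger for rank $\ge3$, where $\bar P$ contains a copy of $\mathbb Z^2$ spanned by two non-adjacent generators --- this disposes of the remaining coincidences, between a dihedral Artin group and a higher-rank one (such as $A(I_2(2n))$ versus $A(B_n)$). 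A finite inspection of the classification then shows that $(c,N,h,\text{chain length})$ determines the Coxeter diagram of $P$ up to the usual identifications $A_3\cong D_3$, $B_2\cong I_2(4)$, $G_2\cong I_2(6)$, $A_2\cong I_2(3)$, whence $P\cong Q$. This also recovers and extends the partial results of \cite{KPV18}.

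\emph{The main obstacle.} Two points carry the weight. First, Fact~\ref{reduction_to_components_fact} presupposes centreless factors, so the reduction must be routed through the central quotients and then the missing data ($k$ and the isomorphism type of each $A_i$) recovered from them; some care is needed so that peeling off the free abelian factor leaves genuinely non-trivial domains. Second, and more seriously, verifying that the invariants above are actually complete requires a case analysis through the whole list of irreducible finite Coxeter groups: one must control the torsion of the central quotients (hence the periodic-element / regular-number theory for every type) and obtain bounds on the centralizer-chain dimension of $\bar P$ sharp enough to exclude the coincidental pairs. I expect this verification to be the hard part.
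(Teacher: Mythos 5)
Your reduction to irreducible components is essentially the paper's: pass to central quotients via Lemma~\ref{easy_lemma} and Fact~\ref{center_dec_fact}, invoke Corollary~\ref{spherical_domain} and Facts~\ref{fact_unique_dec}--\ref{domain_fact} to match up factors, and reduce to the statement that $P/Z(P)\equiv Q/Z(Q)$ forces $P\cong Q$ for irreducible $P,Q$ (your explicit handling of the rank-one $\mathbb Z^k$ factor is in fact more careful than the paper's). The divergence, and the gap, is in the irreducible case.

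First, your leading invariant is not one. The abelianization of a finitely generated group is \emph{not} an elementary invariant: $F_2\equiv F_3$ by Sela and Kharlampovich--Myasnikov, yet $F_2^{\mathrm{ab}}\not\cong F_3^{\mathrm{ab}}$. The commutator subgroup is a union of an unbounded chain of definable sets, not a definable set, so $\bar P^{\mathrm{ab}}$ is not interpretable in $\bar P$ without a finite-width argument you do not supply. This matters because the data $(c,N)$ you extract from $\bar P^{\mathrm{ab}}$ carries real weight in your scheme. Second, your remaining invariants fail on exactly the pair the paper has to work hardest for: $D_6$ versus $H_3$. Their central quotients have the same torsion orders $\{3,5\}$ (Table~\ref{ord_tors_elem}), the same Coxeter number $h=10$, and both have rank $\geq 3$ and contain $\mathbb Z^2$, so neither the torsion/periodic-element data nor the centralizer-chain length separates them; the only thing that does is the pair $\mathbb Z/30$ versus $\mathbb Z/15$ of abelianized central quotients, i.e.\ precisely the invalid invariant. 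The paper's proof of Theorem~\ref{spherical_irr_th} circumvents this by a genuinely different mechanism: a first-order formula (Remark~\ref{useful_remark}) that produces actual homomorphisms $\phi\colon A\to B$ and $\phi'\colon B\to A$ injective on finite subgroups, the Castel--Paris classification of endomorphisms of type-$D$ Artin groups to show $\phi'\circ\phi$ is an automorphism, and only then a legitimate comparison of abelianizations via the resulting retraction. Your centralizer-chain idea for the pairs $(A_2,D_4)$ and $(B_n,I_2(2n))$ is plausible but unverified; the paper instead quotes Andr\'e's theorem that hyperbolicity is preserved under elementary equivalence, which settles those pairs cleanly. As written, your argument does not close the $D_6$/$H_3$ case, so the proof is incomplete.
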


	Theorem~\ref{main_th3} relies among other things on fundamental hyperbolic results of S.~Andr\'e from \cite{andre_hyper}. Also, we stress once again that the use of the techniques from Section~\ref{irr_component_intro} was crucial here, in the passage from irreducible to non irreducible spherical Artin groups, thus showing that those techniques also apply (with some restrictions) to torsion-free Artin groups, as already mentioned there.
	
	Finally, we observe that the isomorphism problem for Artin groups of spherical type was solved only in 2004 \cite{Par04}; therefore, comparing Artin groups is not an easy task, even in the spherical case.

    We end our investigations on the model theory of spherical Artin groups by asking the following question.

    \begin{question} Let $A$ be a spherical Artin group. Then, is $A$ is first-order rigid? That is, is it the case that if $B$ is finitely generated and $B$ is elementary equivalent to $A$, then $B$ is isomorphic to $A$?
      
    \end{question}
	
\subsection{Affine Artin groups}\label{intro_sub_affine}

    The next most important class of Artin groups ``far from RAAGs'' is arguably the class of affine Artin groups, i.e., those for which the corresponding Coxeter group is affine.
    Many crucial open questions on affine Artin groups were solved in recent years, such as the word problem, the triviality of the center \cite{McCammondSulway17}, and the $K(\pi, 1)$ conjecture \cite{P-Salvetti21}.

\smallskip
    
    We pose the following fundamental question:
    
	\begin{question}\label{affine_question} Let $A$ and $B$ be affine Artin groups. Is it the case that $A$ and $B$ are elementary equivalent if and only if they are isomorphic?
\end{question}

    We were unable to give a complete answer to Question~\ref{affine_question} and we believe that a solution to this problem might be challenging and require new non-trivial algebraic results. On the other hand, we were able to show the following; recall that an Artin group $A(\Gamma)$ is called {\em simply laced} when the associated Coxeter matrix $m$ is such that $m_{s,t} \in \{2, 3\}$ for all generators $s \neq t \in \Gamma$.  Before stating the theorem, note that irreducible affine Artin groups are known to be acylindrically hyperbolic and torsion-free (cf.~\cite{calvez22} and \cite{McCammondSulway17}) and so the model theory of affine Artin groups reduces to its irreducible components by Section~\ref{irr_component_intro}. Consequently, we focus on irreducible affine Artin groups in what follows.

    \begin{theorem}\label{affArt_existeq} Let $A$ be an Artin group of type $\tilde{A}_n$, with $n \geq 4$. Then for every irreducible simply laced affine Artin group $B$, we have that $A$ and $B$ are $\exists$-elementary equivalent (i.e., they satisfy the same existential sentences) if and only if they are isomorphic.
    \end{theorem}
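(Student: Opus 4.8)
The ``if'' direction is immediate, so the content is the forward direction: given an irreducible simply laced affine Artin group $B$ with $B\equiv_{\exists}A$, one must deduce $B\cong A$. Since $B=A(\mathcal{T})$ for one of the types $\tilde A_m$ ($m\geq 2$), $\tilde D_m$ ($m\geq 4$), $\tilde E_6,\tilde E_7,\tilde E_8$, and since $B=A$ precisely when $\mathcal{T}=\tilde A_n$, it is enough to prove: for every such type $\mathcal{T}$ other than $\tilde A_n$ there is an existential sentence separating $A(\tilde A_n)$ from $A(\mathcal{T})$. The plan also uses that $\exists$-elementary equivalence coincides with $\forall$-elementary equivalence, and hence — both groups being finitely presented — that $B\equiv_{\exists}A$ is equivalent to the following ``approximate embedding in both directions'': every finite subset of $A\setminus\{1\}$ is preserved by some homomorphism $A\to B$, and symmetrically with $A$ and $B$ interchanged.

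The plan is to take the separating sentence of the following existential shape. Writing $A=\langle x_1,\dots,x_{n+1}\mid\rho_1,\dots,\rho_k\rangle$ for the standard Artin presentation of type $\tilde A_n$, let $\varphi_w$ be the sentence asserting the existence of a tuple $\bar x$ with $\rho_1(\bar x)=\cdots=\rho_k(\bar x)=1$ and $w(\bar x)\neq 1$, for a word $w$ to be chosen. Then $\varphi_w$ holds in a group $G$ exactly when there is a homomorphism $A\to G$ not killing $w$, and it holds in $A$ itself for any $1\neq w\in A$. So the task reduces to the purely group-theoretic statement: for each relevant type $\mathcal{T}$ there is a nontrivial element $w_{\mathcal{T}}$ of $A(\tilde A_n)$ (or, symmetrically, of $A(\mathcal{T})$) that is killed by \emph{every} homomorphism $A(\tilde A_n)\to A(\mathcal{T})$ (respectively $A(\mathcal{T})\to A(\tilde A_n)$); by the reformulation above this already contradicts $B\equiv_{\exists}A$.

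The homological input is what produces such a $w$. By the $K(\pi,1)$ theorem for affine Artin groups, the Salvetti complex of each affine Artin group is a classifying space, so the integral (co)homology of $A(\mathcal{T})$ is computable for every affine $\mathcal{T}$; the ``homology results of independent interest'' advertised in the abstract amount to extracting from these computations, for $n\geq 4$, a degree $d=d(n)$ and a class in $H_{d}(A(\tilde A_n);\mathbb{Z})$ — a free summand, or a torsion summand whose order records the value of $n$ — which is absent from $H_{d}(A(\mathcal{T});\mathbb{Z})$ for every other simply laced affine type $\mathcal{T}$. Lemma~\ref{affArt_retr} is then used to make this class visible at the level of elements: it yields a retraction of $A(\tilde A_n)$ onto a concrete auxiliary group $R$ (a group built out of abelian and small parabolic subgroups, say) which detects the class, so that $R$ is a retract of $A(\tilde A_n)$, while the absence of the class in $A(\mathcal{T})$ prevents a corresponding amount of $R$ from embedding into $A(\mathcal{T})$. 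One chooses $w_{\mathcal{T}}\in R\le A(\tilde A_n)$ to represent the class; then a homomorphism $A(\tilde A_n)\to A(\mathcal{T})$ not killing $w_{\mathcal{T}}$, restricted to $R$, would transport the forbidden homological configuration into $A(\mathcal{T})$ — a contradiction. For the finitely many types $\mathcal{T}$ whose homology is ``larger'' than that of $A(\tilde A_n)$ in the relevant degree, one runs the symmetric argument, obstructing homomorphisms $A(\mathcal{T})\to A(\tilde A_n)$ instead.

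The step I expect to be the real obstacle is the passage from the homological obstruction to a genuinely \emph{first-order}, existential one: a finite list of word-inequations certifies only that finitely many elements are nontrivial, and one has to argue that, in the presence of the presentation relations $\rho_j$ of $A(\tilde A_n)$, such a finite certificate already forces the target group to carry the distinguishing homology class. This calls for a compactness/ultraproduct argument combined with the fact that the class is \emph{finitely detected} — it survives in a suitable finitely presented quotient of $A(\tilde A_n)$ — and it is precisely here that the finite presentability of $A(\tilde A_n)$, the explicit homology computations based on the $K(\pi,1)$ theorem, and the concrete nature of $R$ supplied by Lemma~\ref{affArt_retr} are all needed together. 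A secondary subtlety, which is where the hypothesis $n\geq 4$ enters, is that the separating invariant must distinguish $A(\tilde A_n)$ not only from the types $\tilde D$ and $\tilde E$ but also from $A(\tilde A_m)$ for all $m\neq n$: the low-degree homology of $A(\tilde A_n)$ is too degenerate for $n\leq 3$ for the relevant class to exist, and only for $n\geq 4$ does one obtain an invariant genuinely sensitive to the value of $n$.
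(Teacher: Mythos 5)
Your instinct that homology is the obstruction is correct, but the mechanism you describe is not the one used, and your plan has a genuine gap exactly where you flag the difficulty. You propose to obtain, from $\exists$-equivalence, homomorphisms $A(\tilde A_n)\to A(\mathcal T)$ preserving arbitrary finite sets of inequations, and then to deduce a contradiction because ``the target group must carry the distinguishing homology class.'' But an arbitrary homomorphism not killing a finite set of words does not transport homology classes, and there is no compactness argument that upgrades such a finite certificate to a statement about $H_2$. The paper avoids this entirely by going through strong co-Hopficity. It first shows (using Paris--Soroko's description of $\mathrm{End}(A(\tilde A_n))$ for $n\geq 4$) that $A(\tilde A_n)$ is strongly co-Hopfian, and then invokes Andr\'e's theorem on acylindrically hyperbolic, strongly co-Hopfian groups: if $\mathrm{Th}_\exists(G)=\mathrm{Th}_\exists(G')$ and $G$ is such a group, there is an embedding $i\colon G\hookrightarrow G'$ and a \emph{retraction} $G'\to i(G)$. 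You have misread Corollary~\ref{affArt_retr}: it is $B$ that retracts onto a copy of $A(\tilde A_n)$, not $A(\tilde A_n)$ that retracts onto some auxiliary group $R$. Once the retraction is in hand, the homology argument is clean: a retract gives a split injection $H_*(A(\tilde A_n);\mathbb Z)\hookrightarrow H_*(B;\mathbb Z)$, and Proposition~\ref{prop:affine-homology} shows that $H_2(A(\tilde A_n);\mathbb Z)$ contains a free $\mathbb Z$ summand while $H_2(A(\tilde D_m);\mathbb Z)$ and $H_2(A(\tilde E_m);\mathbb Z)$ are pure $2$-torsion (Corollary~\ref{tildeADE}). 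No ultraproduct or compactness argument is needed; Andr\'e's corollary already delivers the group map to which homology applies.

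Two further points are missing or mistaken. First, you omit how to distinguish $A(\tilde A_n)$ from $A(\tilde A_m)$ for $m\neq n$, $m\geq 4$: their $H_2$ groups are the \emph{same} ($\mathbb Z\oplus\mathbb Z/2\mathbb Z$ for all $n\geq 4$), so homology cannot do this; the paper instead uses that $\tilde A_n$ is co-Hopfian (Bell--Margalit), so a retraction of $A(\tilde A_m)$ onto a copy of $A(\tilde A_n)$ forces $m=n$ (Corollary~\ref{tildeA_exequiv}). Second, the hypothesis $n\geq 4$ is not because ``low-degree homology of $A(\tilde A_n)$ is too degenerate for $n\leq 3$'' --- $\tilde A_2$ and $\tilde A_3$ also have $\mathbb Z$ summands in $H_2$. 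It is needed because the Paris--Soroko classification of endomorphisms (hence strong co-Hopficity, hence Andr\'e's retraction corollary) is only available for $n\geq 4$.
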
 

    Our proof relies on the description from \cite{tilde_An} of $\mrm{End}(A)$, where $A$ is an Artin group of type $\tilde{A}_n$ for $n \geq 4$, and also on the following homology result of independent interest.

    \begin{theorem}
    	Let $\Gamma$ be a simply laced irreducible Coxeter graph, and suppose that the $K(\pi, 1)$ conjecture holds for the Artin group $A(\Gamma)$. Then $H_2(A(\Gamma); \mathbb{Z}) \cong \mathbb{Z}^b \oplus (\mathbb{Z} / 2\mathbb{Z})^c$, where $b \in \mathbb{N}$ is the first Betti number of the graph $\Gamma$ and $c \in \mathbb{N}$ is another natural number which can be explicitly computed from $\Gamma$.
    In particular, if $\Gamma$ is a tree, then $H_2(A(\Gamma); \mathbb{Z}) \cong (\mathbb{Z} / 2\mathbb{Z})^c$.
\end{theorem}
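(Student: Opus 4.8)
The plan is to invoke the $K(\pi,1)$ conjecture to identify $H_*(A(\Gamma);\mathbb{Z})$ with the cellular homology of the Salvetti complex $\mathrm{Sal}(\Gamma)$, and then to compute the relevant truncation of its cellular chain complex by hand. Recall that $\mathrm{Sal}(\Gamma)$ has one cell $e_T$ of dimension $|T|$ for each $T \subseteq S$ with $W_T$ finite, and that the subcomplex spanned by the cells $e_{T'}$, $T' \subseteq T$, is the Salvetti complex of the parabolic $A_T$. Since $\Gamma$ is simply laced and irreducible, every pair $\{s,t\}$ gives a $2$-cell, while the $3$-cells correspond exactly to the triples $\{x,y,z\}$ which do not span a triangle (a triangle with all labels $3$ being the infinite group $\widetilde A_2$); thus every $3$-cell is of one of three kinds: $\{x,y,z\}$ spans no edge (type $A_1^3$), exactly one edge (type $A_1 \times A_2$), or a path of two edges (type $A_3$). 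We only need the segment $C_3 \xrightarrow{\partial_3} C_2 \xrightarrow{\partial_2} C_1$.

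First I would record $\partial_2$. The (abelianized Fox-derivative) description of the Salvetti chain complex gives $\partial_2 e_{\{s,t\}} = e_s - e_t$ when $\{s,t\}$ is an edge of $\Gamma$ and $\partial_2 e_{\{s,t\}} = 0$ when $\{s,t\}$ is a non-edge. Splitting $C_2 = C_2^{\mathrm{e}} \oplus C_2^{\mathrm{ne}}$ into the edge- and non-edge-indexed summands, $\partial_2$ vanishes on $C_2^{\mathrm{ne}}$ and restricts on $C_2^{\mathrm{e}}$ to the cellular boundary map of $\Gamma$ regarded as a $1$-dimensional complex. Hence $\ker\partial_2 = H_1(\Gamma;\mathbb{Z}) \oplus C_2^{\mathrm{ne}} \cong \mathbb{Z}^{b} \oplus \mathbb{Z}^{N}$, where $b$ is the first Betti number of $\Gamma$ and $N = \binom{|S|}{2} - \#E(\Gamma)$ is the number of non-edges (using that $\Gamma$ is connected).

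Next I would compute $\partial_3$ on the three kinds of $3$-cell. If $\{x,y,z\}$ spans no edge, its spanned subcomplex is $(S^1)^3$ and $\partial_3 = 0$. If $\{x,y,z\}$ spans the single edge $\{x,y\}$, the spanned subcomplex is $\mathrm{Sal}(A_{\{x,y\}}) \times S^1$, and the graded Leibniz rule gives $\partial_3 e_{\{x,y,z\}} = \pm\big(e_{\{x,z\}} - e_{\{y,z\}}\big)$, with both $\{x,z\}$ and $\{y,z\}$ non-edges. If $\{x,y,z\}$ spans a path, say with edges $\{x,y\}$ and $\{y,z\}$ and $\{x,z\}$ a non-edge, then $\partial_2\partial_3 = 0$ forces $\partial_3 e_{\{x,y,z\}} = \gamma\, e_{\{x,z\}}$ for some $\gamma \in \mathbb{Z}$, and evaluating the truncated complex for $\Gamma = A_3$ — where $A(\Gamma)$ is the braid group $B_4$ with $H_2(B_4;\mathbb{Z}) \cong \mathbb{Z}/2\mathbb{Z}$ by the classical computation of the homology of braid groups — pins down $|\gamma| = 2$. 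In every case $\mathrm{im}\,\partial_3 \subseteq C_2^{\mathrm{ne}}$, so
\[
H_2(A(\Gamma);\mathbb{Z}) \;\cong\; \mathbb{Z}^{b} \;\oplus\; \mathrm{coker}\big(\partial_3 \colon C_3 \to C_2^{\mathrm{ne}}\big).
\]

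Finally I would show that this cokernel is annihilated by $2$. Let $\{p,q\}$ be a non-edge and fix a geodesic $p = v_0, v_1, \dots, v_k = q$ in the connected graph $\Gamma$, where $k \geq 2$. For each $j \leq k-3$ the triple $\{v_j, v_{j+1}, v_k\}$ spans exactly one edge (geodesity keeps $v_k$ non-adjacent to $v_j$ and to $v_{j+1}$), so the associated relation yields $e_{\{v_0,v_k\}} \equiv e_{\{v_1,v_k\}} \equiv \dots \equiv e_{\{v_{k-2},v_k\}}$ in the cokernel, up to signs; and $\{v_{k-2}, v_{k-1}, v_k\}$ spans a path, so there $2\,e_{\{v_{k-2},v_k\}} = 0$. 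Hence $2\,e_{\{p,q\}} = 0$, the sign ambiguity being harmless once a $2$-torsion relation is reached. Therefore the cokernel is an $\mathbb{F}_2$-vector space, i.e.\ isomorphic to $(\mathbb{Z}/2\mathbb{Z})^{c}$ with $c = N - \mathrm{rank}_{\mathbb{F}_2}\!\big(\partial_3 \bmod 2\big)$, which is explicitly computable from $\Gamma$ (modulo $2$ only the one-edge $3$-cells contribute, the cell with edge $\{x,y\}$ and isolated vertex $z$ mapping to $e_{\{x,z\}} + e_{\{y,z\}}$). This gives $H_2(A(\Gamma);\mathbb{Z}) \cong \mathbb{Z}^b \oplus (\mathbb{Z}/2\mathbb{Z})^c$, and when $\Gamma$ is a tree, $b = 0$. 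The main obstacle is the careful identification of the boundary maps — fixing orientations of the Salvetti cells so that the one-edge $3$-cell really has boundary $\pm(e_{\{x,z\}}-e_{\{y,z\}})$ and the path $3$-cell really has boundary of the form $\pm 2\,e_{\{x,z\}}$ — but the product decompositions above together with the classical value of $H_2(B_4)$ deliver exactly this, and the remaining combinatorial step is elementary.
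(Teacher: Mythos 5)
Your proof is correct and follows essentially the same route as the paper: the same truncation $C_3 \to C_2 \to C_1$ of the Salvetti chain complex, the same three-way case analysis of the $3$-cells, and the same geodesic/connectivity argument showing the cokernel on the non-edge summand is an $\mathbb{F}_2$-vector space of explicitly computable rank. The only (harmless) deviation is that you pin down the coefficient $2$ on the path-type $3$-cell indirectly, via $\partial_2\partial_3=0$ together with the classical value $H_2(B_4)\cong\mathbb{Z}/2\mathbb{Z}$, whereas the paper reads all boundary coefficients off directly from the formula $[\sigma:\tau]\,\left.\tfrac{W_\sigma(q)}{W_\tau(q)}\right|_{q=-1}$.
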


    We use this homology result in the case of affine Artin groups, relying on the solution of the $K(\pi, 1)$ conjecture due to the third named author and M.~Salvetti.
    See also \cite{AkitaLiu18, CallegaroSalvetti04, DeConcini99, Paolini19_local_homology, P-Salvetti18} for other computations of the homology and cohomology of Artin groups.

    To the best of our knowledge, a description of $\mrm{End}(A)$ is not known for affine Artin groups except $\tilde{A}_n$ with $n \geq 4$, and so we are not currently able to extend our proof strategy beyond this case.

\section{Preliminaries}\label{prelim}

Artin groups, also known as Artin-Tits groups, are one of the most famous families of groups in geometric group theory, together with their relatives, Coxeter groups. While many results are known about Coxeter groups, the same cannot be said for Artin groups, whose properties remain in general rather obscure. Some examples of Artin groups are free groups, free abelian groups and braid groups.

Both Coxeter and Artin groups are defined by a presentation, encoded by a matrix or, equivalently, by a graph. Let $S$ be a finite set and $m$ a symmetric matrix whose rows and columns are both in bijective correspondence with $S$. Suppose that for $s,t\in S$ the element $m_{s,t}$ of $m$ satisfies $m_{s,t}=1$ if $s=t$ and $m_{s,t}\in\{2,\dots,\infty \}$ if $s\ne t$. Such a matrix is called a Coxeter matrix over $S$.

Given a Coxeter matrix $m$, we can define two groups associated to $S$ and $m$, the Coxeter group $W$ and the Artin group $A$, given by the following presentations:
\[ W=\langle S \mid s^2=e,\; \underbrace{st\dots}_{m_{s,t}}=\underbrace{ts\dots}_{m_{t,s}}\,\forall s\ne t \text{ s.t.~} m_{s,t}\ne\infty\rangle\]
\[A=\langle S \mid \underbrace{st\dots}_{m_{s,t}}=\underbrace{ts\dots}_{m_{t,s}}\,\forall s\ne t \text{ s.t.~} m_{s,t}\ne\infty\rangle\]

The data encoded by $m$ can also be represented by means of a graph, known as the Coxeter graph of $A$ and $W$. The vertices of this graph are the elements of $S$, and two vertices $s,t$ are connected by an edge if and only if $m_{s,t}\geq 3$. The edge connecting $s$ and $t$ is labeled $m_{s,t}$ if $m_{s,t}\geq 4$.
Therefore, two non-connected vertices represent commuting generators. Note that while this convention is the standard one for what concerns Coxeter groups, another convention is often used for Artin groups, namely that two vertices $s$ and $t$ in the graph are connected by an edge if and only if $m_{s,t}\ne\infty$, and such edge is labeled $m_{s,t}$ if $m_{s,t} \geq 3$. However, we will employ the first convention, as it expresses more valuable information for our purposes.
The Artin (resp.~Coxeter) group associated to the Coxeter graph $\Gamma$ is denoted $A(\Gamma)$ (resp. $W(\Gamma)$).

Next, we define key families of Artin groups for our main arguments, as well as ancillary ones needed to frame the discussion.

\begin{definition}
	An Artin group $A(\Gamma)$ is said to be spherical or of finite type if the corresponding Coxeter group $W(\Gamma)$ is finite; otherwise, it is said to be non-spherical or of infinite type.
\end{definition}
\begin{definition}
	An Artin group $A(\Gamma)$ is said to be right-angled (RAAG for short) if $m_{s,t}\in\{2,\infty\}$ for every $s,t \in S$, i.e., if the only relations that we see in the presentation are commutators among generators.
\end{definition}
\begin{definition}
    An Artin group $A(\Gamma)$ is said to be affine (or Euclidean) if the corresponding Coxeter group is an affine Coxeter group.
\end{definition}

Let $\Gamma$ be a Coxeter graph and $X$ be a subset of the set of vertices of $\Gamma$. Then the subgraph $\Gamma'$ spanned by $X$ is a Coxeter graph and $\langle X \rangle \leq A(\Gamma)$ is isomorphic to the Artin group $A(\Gamma')$ defined by the subgraph $\Gamma'$ (see \cite{vdL83}). Such a subgroup is called a standard parabolic subgroup of $A(\Gamma)$.

\begin{definition}
	An Artin group $A(\Gamma)$ is said to be an even Artin group of FC type if all the labels in $\Gamma$ are even (or $\infty$) and, for any induced subgraph $\Gamma'$ of $\Gamma$ with no edges labeled $\infty$, the corresponding standard parabolic subgroup $A(\Gamma')$ is a spherical Artin group.
\end{definition}

Given the definition of a standard parabolic subgroup, the convention for Coxeter graphs that we use in this paper yields that an Artin group $A(\Gamma)$ coincides with the direct product $A(\Gamma_1)\times\cdots\times A(\Gamma_n)$, where the $\Gamma_i$'s are the connected components of $\Gamma$. The Artin groups corresponding to connected Coxeter graphs are called irreducible. There is a complete classification of spherical Artin groups, whose irreducible factors are those listed in Table \ref{irredCoxdiagr} (the table was taken from \cite{Sor21}). Therefore, an Artin group is spherical if, and only if, its Coxeter graph has the graphs of Table~\ref{irredCoxdiagr} as connected components.

\begin{figure}
	\centering
	\begin{tikzpicture}
		\node () at (0,9) {$A_n$\;$(n\geq 1):$};
		\draw[thick] (2,9) -- (4,9)
					(5,9) -- (6,9);
		\draw[dashed, thick] (4,9) -- (5,9);
		\node[circle, draw, fill, text width=1mm, inner sep=0.5] at (2,9) {};
		\node[circle, draw, fill, text width=1mm, inner sep=0.5] at (3,9) {};
		\node[circle, draw, fill, text width=1mm, inner sep=0.5] at (4,9) {};
		\node[circle, draw, fill, text width=1mm, inner sep=0.5] at (5,9) {};
		\node[circle, draw, fill, text width=1mm, inner sep=0.5] at (6,9) {};
		
		\node[below, font=\small] at (2,9) {$s_1$};
		\node[below, font=\small] at (3,9) {$s_2$};
		\node[below, font=\small] at (4,9) {$s_3$};
		\node[below, font=\small] at (5,9) {$s_{n-1}$};
		\node[below, font=\small] at (6,9) {$s_n$};

		\node () at (0,8) {$B_n$\;$(n\geq 2):$};
		\draw[thick] (2,8) -- (4,8)
		(5,8) -- (6,8);
		\draw[dashed, thick] (4,8) -- (5,8);
		\node[circle, draw, fill, text width=1mm, inner sep=0.5] at (2,8) {};
		\node[circle, draw, fill, text width=1mm, inner sep=0.5] at (3,8) {};
		\node[circle, draw, fill, text width=1mm, inner sep=0.5] at (4,8) {};
		\node[circle, draw, fill, text width=1mm, inner sep=0.5] at (5,8) {};
		\node[circle, draw, fill, text width=1mm, inner sep=0.5] at (6,8) {};
		
		\node[below, font=\small] at (2,8) {$s_1$};
		\node[below, font=\small] at (3,8) {$s_2$};
		\node[below, font=\small] at (4,8) {$s_3$};
		\node[below, font=\small] at (5,8) {$s_{n-1}$};
		\node[below, font=\small] at (6,8) {$s_n$};
		\node[above, font=\small] at (2.5,8) {4};

		\node () at (0,7) {$D_n$\;$(n\geq 4):$};
		\draw[thick] (2,7.3) -- (3,7) -- (4,7)
		(2,6.7) -- (3,7)
		(5,7) -- (6,7);
		\draw[dashed, thick] (4,7) -- (5,7);
		\node[circle, draw, fill, text width=1mm, inner sep=0.5] at (2,7.3) {};
		\node[circle, draw, fill, text width=1mm, inner sep=0.5] at (2,6.7) {};
		\node[circle, draw, fill, text width=1mm, inner sep=0.5] at (3,7) {};
		\node[circle, draw, fill, text width=1mm, inner sep=0.5] at (4,7) {};
		\node[circle, draw, fill, text width=1mm, inner sep=0.5] at (5,7) {};
		\node[circle, draw, fill, text width=1mm, inner sep=0.5] at (6,7) {};
		
		\node[below, font=\small] at (2,7.3) {$s_1$};
		\node[below, font=\small] at (2,6.7) {$s_2$};
		\node[below, font=\small] at (3,7) {$s_3$};
		\node[below, font=\small] at (4,7) {$s_4$};
		\node[below, font=\small] at (5,7) {$s_{n-1}$};
		\node[below, font=\small] at (6,7) {$s_n$};

		\node () at (0,6) {$E_6:$};
		\draw[thick] (2,6) -- (6,6)
		(4,6) -- (4,5.5);
		
		\node[circle, draw, fill, text width=1mm, inner sep=0.5] at (2,6) {};
		\node[circle, draw, fill, text width=1mm, inner sep=0.5] at (3,6) {};
		\node[circle, draw, fill, text width=1mm, inner sep=0.5] at (4,6) {};
		\node[circle, draw, fill, text width=1mm, inner sep=0.5] at (5,6) {};
		\node[circle, draw, fill, text width=1mm, inner sep=0.5] at (6,6) {};
		\node[circle, draw, fill, text width=1mm, inner sep=0.5] at (4,5.5) {};
		
		\node[below, font=\small] at (2,6) {$s_1$};
		\node[right, font=\small] at (4,5.5) {$s_2$};
		\node[below, font=\small] at (3,6) {$s_3$};
		\node[below left, font=\small] at (4,6) {$s_4$};
		\node[below, font=\small] at (5,6) {$s_5$};
		\node[below, font=\small] at (6,6) {$s_6$};

		\node () at (0,5) {$E_7:$};
		\draw[thick] (2,5) -- (7,5)
		(4,5) -- (4,4.5);
		
		\node[circle, draw, fill, text width=1mm, inner sep=0.5] at (2,5) {};
		\node[circle, draw, fill, text width=1mm, inner sep=0.5] at (3,5) {};
		\node[circle, draw, fill, text width=1mm, inner sep=0.5] at (4,5) {};
		\node[circle, draw, fill, text width=1mm, inner sep=0.5] at (5,5) {};
		\node[circle, draw, fill, text width=1mm, inner sep=0.5] at (6,5) {};
		\node[circle, draw, fill, text width=1mm, inner sep=0.5] at (7,5) {};
		\node[circle, draw, fill, text width=1mm, inner sep=0.5] at (4,4.5) {};
		
		\node[below, font=\small] at (2,5) {$s_1$};
		\node[right, font=\small] at (4,4.5) {$s_2$};
		\node[below, font=\small] at (3,5) {$s_3$};
		\node[below left, font=\small] at (4,5) {$s_4$};
		\node[below, font=\small] at (5,5) {$s_5$};
		\node[below, font=\small] at (6,5) {$s_6$};
		\node[below, font=\small] at (7,5) {$s_7$};

		\node () at (0,4) {$E_8:$};
		\draw[thick] (2,4) -- (8,4)
		(4,4) -- (4,3.5);
		
		\node[circle, draw, fill, text width=1mm, inner sep=0.5] at (2,4) {};
		\node[circle, draw, fill, text width=1mm, inner sep=0.5] at (3,4) {};
		\node[circle, draw, fill, text width=1mm, inner sep=0.5] at (4,4) {};
		\node[circle, draw, fill, text width=1mm, inner sep=0.5] at (5,4) {};
		\node[circle, draw, fill, text width=1mm, inner sep=0.5] at (6,4) {};
		\node[circle, draw, fill, text width=1mm, inner sep=0.5] at (7,4) {};
		\node[circle, draw, fill, text width=1mm, inner sep=0.5] at (8,4) {};
		\node[circle, draw, fill, text width=1mm, inner sep=0.5] at (4,3.5) {};
		
		\node[below, font=\small] at (2,4) {$s_1$};
		\node[right, font=\small] at (4,3.5) {$s_2$};
		\node[below, font=\small] at (3,4) {$s_3$};
		\node[below left, font=\small] at (4,4) {$s_4$};
		\node[below, font=\small] at (5,4) {$s_5$};
		\node[below, font=\small] at (6,4) {$s_6$};
		\node[below, font=\small] at (7,4) {$s_7$};
		\node[below, font=\small] at (8,4) {$s_8$};

		\node () at (0,3) {$F_4:$};
		\draw[thick] (2,3) -- (5,3);
		
		\node[circle, draw, fill, text width=1mm, inner sep=0.5] at (2,3) {};
		\node[circle, draw, fill, text width=1mm, inner sep=0.5] at (3,3) {};
		\node[circle, draw, fill, text width=1mm, inner sep=0.5] at (4,3) {};
		\node[circle, draw, fill, text width=1mm, inner sep=0.5] at (5,3) {};
				
		\node[below, font=\small] at (2,3) {$s_1$};
		\node[below, font=\small] at (3,3) {$s_2$};
		\node[below, font=\small] at (4,3) {$s_3$};
		\node[below, font=\small] at (5,3) {$s_4$};
		\node[above, font=\small] at (3.5,3) {4};

		\node () at (0,2) {$H_3:$};
		\draw[thick] (2,2) -- (4,2);
		
		\node[circle, draw, fill, text width=1mm, inner sep=0.5] at (2,2) {};
		\node[circle, draw, fill, text width=1mm, inner sep=0.5] at (3,2) {};
		\node[circle, draw, fill, text width=1mm, inner sep=0.5] at (4,2) {};
		
		\node[below, font=\small] at (2,2) {$s_1$};
		\node[below, font=\small] at (3,2) {$s_2$};
		\node[below, font=\small] at (4,2) {$s_3$};
		\node[above, font=\small] at (2.5,2) {5};

		\node () at (0,1) {$H_4:$};
		\draw[thick] (2,1) -- (5,1);
		
		\node[circle, draw, fill, text width=1mm, inner sep=0.5] at (2,1) {};
		\node[circle, draw, fill, text width=1mm, inner sep=0.5] at (3,1) {};
		\node[circle, draw, fill, text width=1mm, inner sep=0.5] at (4,1) {};
		\node[circle, draw, fill, text width=1mm, inner sep=0.5] at (5,1) {};
		
		\node[below, font=\small] at (2,1) {$s_1$};
		\node[below, font=\small] at (3,1) {$s_2$};
		\node[below, font=\small] at (4,1) {$s_3$};
		\node[below, font=\small] at (5,1) {$s_4$};
		\node[above, font=\small] at (2.5,1) {5};

		\node () at (0,0) {$I_2(m)$\;$(m\geq 5, m\ne\infty):$};
		\draw[thick] (4,0) -- (5,0);
		
		\node[circle, draw, fill, text width=1mm, inner sep=0.5] at (4,0) {};
		\node[circle, draw, fill, text width=1mm, inner sep=0.5] at (5,0) {};
		
		\node[below, font=\small] at (4,0) {$s_1$};
		\node[below, font=\small] at (5,0) {$s_2$};
		\node[above, font=\small] at (4.5,0) {$m$};
	\end{tikzpicture}
	\caption{The Coxeter graphs of the irreducible spherical Artin groups}
	\label{irredCoxdiagr}
\end{figure}
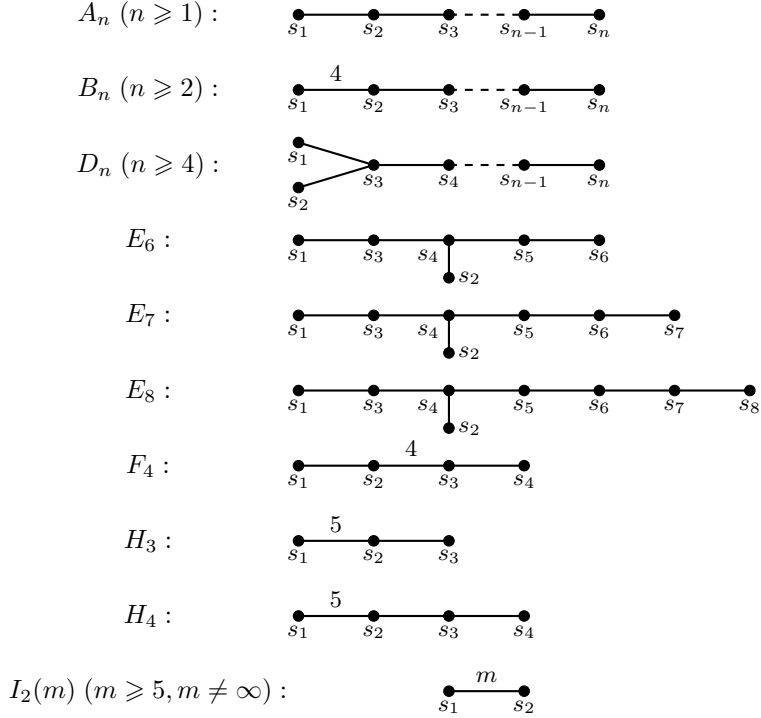

Spherical Artin groups are an example of Garside groups, an important class of groups with rich combinatorics. These are groups of fractions of some particular monoids, called Garside monoids, that contain a distinguished element $\Delta$ playing an important role in such combinatorics. We will not recall the details and results of Garside theory; instead, we refer to \cite{Dehor02, found_gars_th, DehorPar}. For the sake of clarity, we specify that whenever we cite the Garside structure of a spherical Artin group we refer to the standard one. The only result we need is the following:

\begin{theorem}[\cite{BriSa}]\label{censphAr}
	Let $A$ be a spherical Artin group. Then $Z(A)$ is infinite cyclic and generated by either $\Delta$ or $\Delta^2$, where $\Delta$ is the standard Garside element of $A$. In particular, the generator is $\Delta^2$ if the Coxeter graph associated to $A$ is of type $A_n$ with $n\geq 2$, $D_n$ with $n$ odd, $E_6$ and $I_2(m)$ with $m$ odd, and $\Delta$ in all the other cases.
\end{theorem}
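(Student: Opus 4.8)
The plan is to derive everything from the Garside structure of the spherical Artin monoid $A^+$, following Brieskorn--Saito. We may assume $A=A(\Gamma)$ is irreducible: for a general spherical graph $\Gamma=\Gamma_1\sqcup\cdots\sqcup\Gamma_n$ one has $A=A(\Gamma_1)\times\cdots\times A(\Gamma_n)$, $Z(A)=\prod_i Z(A(\Gamma_i))$, and $\Delta=\Delta_1\cdots\Delta_n$, so the irreducible case suffices (and ``infinite cyclic'' is read on the irreducible factors). Let $W=W(\Gamma)$ be the finite Coxeter group, $w_0\in W$ its longest element, and $\Delta\in A^+$ the standard Garside element, i.e.\ the $\preceq$-smallest positive element admitting every $s\in S$ as a left divisor, equivalently the right lcm $\bigvee_{s\in S}s$ of the atoms. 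Let $\tau\colon A\to A$ be conjugation by $\Delta$, $\tau(g)=\Delta g\Delta^{-1}$. It is classical that $\tau$ restricts to an automorphism of $A^+$ and permutes the atoms, $\tau(s)=s^\ast$, where $s\mapsto s^\ast$ is the automorphism of $\Gamma$ induced by $-w_0$ (i.e.\ $s^\ast$ corresponds to the simple root $-w_0(\alpha_s)$). Finally, the length homomorphism $\mu\colon A\to\mathbb{Z}$ sending each generator to $1$ is well defined (each relator has equal length on both sides) and $\mu(\Delta)=\ell(w_0)>0$, so $\Delta$ has infinite order and $\langle\Delta\rangle\cong\mathbb{Z}$.

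Now the ``easy'' half. Since $w_0^2=e$, the graph automorphism $s\mapsto s^\ast$ has order dividing $2$, hence $\tau^2=\mathrm{id}$; therefore conjugation by $\Delta^2$ fixes every generator, i.e.\ $\Delta^2\in Z(A)$, so $\langle\Delta^2\rangle\subseteq Z(A)$. Moreover $\Delta\in Z(A)$ if and only if $\tau=\mathrm{id}$, i.e.\ if and only if $-w_0$ acts trivially on $\Gamma$. Inspecting Table~\ref{irredCoxdiagr} (equivalently, the classical description of when $-1\in W$), $-w_0$ is a nontrivial graph automorphism precisely in types $A_n$ with $n\geq 2$ (the order-two flip), $D_n$ with $n$ odd (the swap of the two short branch nodes), $E_6$ (its order-two diagram symmetry), and $I_2(m)$ with $m$ odd (the flip), and is trivial in all remaining types. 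Consequently, \emph{granting} the upper bound $Z(A)\subseteq\langle\Delta\rangle$, we get $Z(A)=\langle\Delta\rangle$ exactly in the non-exceptional types and $Z(A)=\langle\Delta^2\rangle$ exactly in the exceptional ones, which is the stated dichotomy.

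It remains to prove $Z(A)\subseteq\langle\Delta\rangle$. First reduce to the monoid: every element of $A$ has the form $\Delta^{-k}a$ with $a\in A^+$; enlarging $k$ to an even number makes $\Delta^k$ central, so for $z\in Z(A)$ the positive element $z\Delta^k$ is central, and $z\in\langle\Delta\rangle$ iff $z\Delta^k\in\langle\Delta\rangle$. Since $A$ is the group of fractions of $A^+$ and $\Delta^2$ is central, one checks $A^+\cap Z(A)=Z(A^+)$, so it suffices to show $Z(A^+)\subseteq\langle\Delta\rangle$. For this we isolate the following statement: if $\phi$ is an automorphism of $\Gamma$ (e.g.\ $\phi=\tau^{\pm1}$) and $z\in A^+\setminus\{e\}$ satisfies $za=\phi(a)z$ for all $a\in A^+$, then $\Delta$ left-divides $z$. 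Granting this, take $z\in Z(A^+)\setminus\{e\}$; applying it with $\phi=\mathrm{id}$ gives $z=\Delta z_1$ with $z_1\in A^+$, and from $a\Delta=\Delta\tau^{-1}(a)$ one computes $z_1a=\tau^{-1}(a)z_1$ for all $a$; applying the statement again with $\phi=\tau^{-1}$ peels off a further $\Delta$, and after two peelings we recover a genuinely central positive element (as $\tau^{-2}=\mathrm{id}$) of strictly smaller $\mu$-value. Since $A^+$ is atomic this terminates, forcing $z\in\langle\Delta\rangle$; together with $\tau^m=\mathrm{id}\iff e\mid m$, where $e=\mathrm{ord}(\tau)\in\{1,2\}$, this yields $Z(A^+)=\langle\Delta^e\rangle$ and hence $Z(A)=\langle\Delta^e\rangle$.

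Finally, the strategy for the isolated statement, which is the heart of the matter. Because $\Delta=\bigvee_{s\in S}s$, it suffices to show that the starting set $S(z)=\{\,s\in S:s\preceq z\,\}$ equals $S$, as then $\Delta\preceq z$ by the lattice property of $A^+$. Here $S(z)\neq\varnothing$ since $z\neq e$ and $A^+$ is atomic, and the plan is to propagate fullness along the (connected) graph $\Gamma$: given $s_0\in S(z)$ and any $s\in S$, the relation $z\phi^{-1}(s)=sz$ shows that both $s$ and $s_0$ left-divide $sz$, hence the rank-two Garside element $\mathrm{lcm}(s,s_0)=\Delta_{\{s,s_0\}}$ left-divides $sz$; a divisibility analysis inside the dihedral Artin parabolic $\langle s,s_0\rangle$ (where normal forms are completely explicit), combined with left-cancellativity of $A^+$ and the bookkeeping of the twist $\phi$, is then used to conclude $s\in S(z)$, and connectedness of $\Gamma$ (every pair of generators carries a finite label, since $\Gamma$ is spherical) gives $S(z)=S$. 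Making this ``adjacency-closure'' step genuinely go through — rather than merely re-deriving $s_0\in S(z)$ — is the one nontrivial point, and is where the essential combinatorial input of Brieskorn--Saito on divisibility in $A^+$ enters; the rest of the argument is formal Garside theory together with the case-check above.
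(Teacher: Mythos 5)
The paper offers no proof of Theorem~\ref{censphAr}; it is quoted directly from Brieskorn--Saito \cite{BriSa}, so there is no internal argument to compare yours against. Judged on its own terms, the outline you give is the standard one and its peripheral parts are correct: the reduction to the irreducible case, the fact that conjugation by $\Delta$ permutes the atoms via $-w_0$ so that $\tau^2=\mathrm{id}$ and $\Delta^2\in Z(A)$, the identification of the types with $w_0\neq -1$ (which is exactly the list in the statement), the reduction of the inclusion $Z(A)\subseteq\langle\Delta\rangle$ to the positive monoid, and the $\Delta$-peeling induction on the length homomorphism are all fine, granting standard Garside facts (embeddability of $A^+$, existence of lcm/gcd, every positive element dividing a power of $\Delta$).

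The genuine gap is exactly where you flag it, and it is not a removable blemish: it is the substance of the theorem. The ``adjacency-closure'' step fails as written. From $s\preceq sz$ and $s_0\preceq z\preceq sz$ you correctly deduce $\Delta_{\{s,s_0\}}\preceq sz$; but $\Delta_{\{s,s_0\}}=s\cdot(s_0ss_0\cdots)$, so left-cancelling $s$ yields only $(s_0ss_0\cdots)\preceq z$, whose first letter is $s_0$ --- you have re-proved $s_0\in S(z)$ and learned nothing about $s$. No bookkeeping of the twist $\phi$ inside the rank-two parabolic repairs this on its own: the assertion that a nontrivial quasi-central positive element is left-divisible by every atom (hence by $\Delta$) is precisely the hard combinatorial core of \cite{BriSa} (their divisibility theory and the quasi-center computation built on it), and it cannot be dispatched by formal lattice properties plus one lcm computation. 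As it stands, your argument establishes $\langle\Delta^e\rangle\subseteq Z(A)$ and correctly sorts the types into the two classes, but the reverse inclusion $Z(A)\subseteq\langle\Delta\rangle$ rests on a lemma you have named and motivated without proving.
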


\begin{remark}\label{fundam_elem}
	An explicit description of the generator $\Delta$ or $\Delta^2$ of the center can be found in \cite[Theorem 3.3]{Par04} and in \cite{BriSa}, together with \cite{Hum90}.
\end{remark}

In Section~\ref{sec_elem_eq_spher} we will need the following propositions, the first of which is a restricted version of \cite[Cor. 8.3]{CGGW}.

\begin{proposition}[\cite{CGGW}]\label{rootparsgb}
	Let $A$ be a spherical Artin group and $P$ be a standard parabolic subgroup of $A$. If $g\in A$ is such that $g^k\in P$ for some nonzero $k$, then $g\in P$.
\end{proposition}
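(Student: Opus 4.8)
The plan is to deduce the statement from the fact — essentially the content of the cited result — that the set of parabolic subgroups of $A$ (the $A$-conjugates of the standard ones) is a lattice under inclusion; a slicker alternative via left-orderings is indicated at the end. First a harmless reduction: since $P$ is a subgroup, $g^k\in P$ is equivalent to $g^{-k}\in P$, so replacing $g$ by $g^{-1}$ if necessary we may assume $k\geq 1$. Now use that arbitrary intersections of parabolic subgroups of $A$ are parabolic (parabolics have bounded rank, so any such intersection stabilises to a finite one, and a finite intersection of parabolics is parabolic). Hence every cyclic subgroup $\langle h\rangle$ lies in a unique minimal parabolic subgroup, its \emph{parabolic closure} $\mathrm{Pc}(h)$. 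Since $P$ is a parabolic subgroup containing $g^k$ we have $\mathrm{Pc}(g^k)\leq P$, and since $g^k\in\mathrm{Pc}(g)$ we trivially have $\mathrm{Pc}(g^k)\leq\mathrm{Pc}(g)$; so the whole statement reduces to the equality $\mathrm{Pc}(g)=\mathrm{Pc}(g^k)$, which will give $g\in\mathrm{Pc}(g)=\mathrm{Pc}(g^k)\leq P$.

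To prove $\mathrm{Pc}(g)\leq\mathrm{Pc}(g^k)$, set $Q:=\mathrm{Pc}(g^k)$. Conjugation by $g$ is an automorphism of $A$ that permutes parabolic subgroups and fixes $\langle g^k\rangle$ setwise; by the minimality characterising $Q$ it fixes $Q$, i.e.\ $g\in N_A(Q)$. We are thus reduced to the implication: \emph{if $g\in N_A(Q)$ and $g^k\in Q$ for a parabolic subgroup $Q$, then $g\in Q$}. This is the main obstacle. The natural approach is to combine the description of normalisers of parabolic subgroups of spherical Artin groups (after Godelle and Paris) with the torsion-freeness of $A$: the image of $g$ in $N_A(Q)/Q$ has order dividing $k$, so it would suffice that $N_A(Q)/Q$ be torsion-free. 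For an \emph{arbitrary} parabolic $Q$ this is not clear (the analogous Coxeter quotient $N_W(W_X)/W_X$ is finite and usually has torsion), so one must exploit that $Q=\mathrm{Pc}(g^k)$ is special — the parabolic closure of the element $g^k$ that $g$ centralises — to pin $g$ inside $Q$. Making this rigorous, i.e.\ controlling the interaction of $g$, $g^k$ and $\mathrm{Pc}(g^k)$, is where I would expect to lean most heavily on the Garside-theoretic machinery of \cite{CGGW} (canonical reduction systems for elements of spherical Artin groups and the stability of parabolic closures under taking powers).

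There is an attractive alternative that avoids the normaliser analysis entirely: if one can exhibit a left-invariant total order on $A$ for which the given standard parabolic $A_X$ is \emph{convex} (meaning $a<b<c$ with $a,c\in A_X$ forces $b\in A_X$), then the result follows from a general principle, that a convex subgroup $C$ of a left-ordered group is root-closed. Indeed, if $g\notin C$ then $g\neq 1\in C$, and after replacing $g$ by $g^{-1}$ when $g<1$ we may assume $g>1$; convexity together with $1\in C$ forces $c<g$ for every $c\in C$, and then left-invariance gives $g^k=g\cdot g^{k-1}\geq g> c$ for all $c\in C$, contradicting $g^k\in C$. Spherical Artin groups are left-orderable, and for braid groups the Dehornoy order already makes the ``initial-segment'' standard parabolics convex; the obstacle on this route is obtaining such an order uniformly, over every irreducible spherical type and every subset $X$ of the generators — equivalently, showing that the coset space $A/A_X$ carries an $A$-invariant total order. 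Either way the argument closes up with $\mathrm{Pc}(g)=\mathrm{Pc}(g^k)\leq P$, hence $g\in P$.
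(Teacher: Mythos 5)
The paper does not actually prove this proposition: it is quoted, in restricted form, from \cite{CGGW} (their Corollary~8.3), so the only question is whether your argument establishes the statement from more primitive inputs. It does not. Your reduction is structurally sensible --- existence of parabolic closures (which already rests on the main theorem of \cite{CGGW} that intersections of parabolic subgroups are parabolic), the inclusion $\mathrm{Pc}(g^k)\leq P$, and the observation that $g$ normalises $Q=\mathrm{Pc}(g^k)$ --- but the entire content of the proposition is concentrated in the step you yourself flag as ``the main obstacle'': that $g\in N_A(Q)$ and $g^k\in Q$ force $g\in Q$, equivalently that $\mathrm{Pc}(g)=\mathrm{Pc}(g^k)$. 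That equality \emph{is} Corollary~8.3 of \cite{CGGW} (proved there via their normal-form analysis of parabolic closures and their behaviour under powers), so deferring it to ``the Garside-theoretic machinery of \cite{CGGW}'' is a restatement of the citation, not a proof. The normaliser route does not close the gap either: by the Godelle--Paris description, $N_A(A_X)/A_X$ need not be torsion-free, so torsion-freeness of $A$ alone cannot finish the argument, as you note.

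The left-ordering alternative has the same status. The general principle (a convex subgroup of a left-ordered group is root-closed) is correct and your verification of it is fine, but the required input --- a left-invariant order on $A$ making the given standard parabolic $A_X$ convex, uniformly over all spherical types and all subsets $X$ of the generators --- is not available off the shelf; even for braid groups, convexity in the Dehornoy order is only known for the ``initial segment'' parabolics, not for arbitrary subsets of generators. Both routes therefore terminate at an unproved assertion essentially equivalent to the proposition itself. Your proposal correctly locates where the difficulty lies, which is worth something, but it does not supply a proof, and the paper itself supplies none beyond the reference.
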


\begin{proposition} [\cite{Par04}]\label{delparsgb}
	Let $A$ be a spherical Artin group and $P$ be a proper standard parabolic subgroup of $A$. Let $\Delta$ be the standard Garside element of $A$. Then $P\cap \langle\Delta\rangle=\{e\}$.
\end{proposition}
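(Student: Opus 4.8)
The plan is to use Proposition~\ref{rootparsgb} to reduce the claim about the whole cyclic group $\langle\Delta\rangle$ to the single assertion that $\Delta$ itself is not contained in any proper standard parabolic subgroup, and then to prove that assertion by passing to the Coxeter quotient $\pi\colon A\twoheadrightarrow W$.

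First I would argue by contradiction: suppose $\Delta^k\in P$ for some $k\neq 0$. Since $A$ is spherical and $P$ is a standard parabolic subgroup, Proposition~\ref{rootparsgb} applied with $g=\Delta$ immediately yields $\Delta\in P$. Writing $P=\langle X\rangle$ with $X\subsetneq S$, the quotient group $W$ is finite and $\pi$ restricts to a surjection of $P$ onto $W_X$, the standard parabolic subgroup of $W$ generated by $X$; moreover $\pi(\Delta)=w_0$, the longest element of $W$. Hence $\Delta\in P$ would force $w_0\in W_X$.

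The final step is to rule this out. Since $X\subsetneq S$, the subgroup $W_X$ is a proper standard parabolic of the finite Coxeter group $W$, and the longest element of a finite Coxeter group never lies in a proper standard parabolic subgroup: the $W$-length of $w_0$ equals the number of reflections of $W$, which strictly exceeds the number of reflections of $W_X$ (the reflection attached to any $s\in S\setminus X$ witnesses the strict inequality), while every element of $W_X$ has $W$-length at most the number of reflections of $W_X$. This contradiction gives $\Delta^k\notin P$ for all $k\neq 0$, i.e.\ $P\cap\langle\Delta\rangle=\{e\}$.

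I do not expect a genuine obstacle along this route, since everything is immediate once Proposition~\ref{rootparsgb} and the Coxeter quotient are in hand. If one wanted a proof independent of \cite{CGGW} (note that Proposition~\ref{rootparsgb}, from \cite{CGGW}, postdates \cite{Par04}), the only step requiring real work would be the reduction from $\langle\Delta\rangle$ to $\Delta$. A workable substitute: the Coxeter quotient already forces $k$ to be even (as $w_0^2=e$ and $w_0\notin W_X$), so, replacing $k$ by $|k|$, the element $\Delta^k=(\Delta^2)^{k/2}$ is positive and lies in $P$, hence in the standard parabolic submonoid $P^+=A^+\cap P$; but $\Delta$, being the least common multiple of $S$ in the Artin monoid $A^+$, is left-divisible by every generator, in particular by some $s_0\in S\setminus X$, which contradicts the closure of $P^+$ under left divisors. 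This alternative leans on two standard facts about parabolic submonoids of Artin monoids, which is why I would present the first argument.
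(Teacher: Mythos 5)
Your argument is correct. Note, however, that the paper offers no proof of this statement at all: it is quoted directly from \cite{Par04}, so there is nothing internal to compare against. Your route — reduce to $\Delta\in P$ via Proposition~\ref{rootparsgb}, then push down to the Coxeter quotient, where $\pi(\Delta)=w_0$ and the longest element of a finite Coxeter group cannot lie in a proper standard parabolic $W_X$ (every $w\in W_X$ has $\ell_W(w)\leq\ell_W(w_{0,X})=|\Phi_X^+|<|\Phi^+|=\ell_W(w_0)$) — is sound, and each ingredient ($\pi(\Delta)=w_0$, preservation of length on standard parabolics) is standard. You are also right to flag the mild anachronism of invoking \cite{CGGW} to prove a statement attributed to \cite{Par04}; within this paper that circularity is harmless since Proposition~\ref{rootparsgb} is itself only quoted, not derived from Proposition~\ref{delparsgb}. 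Your monoid-theoretic substitute (parity of $k$ from $w_0^2=e$, then the support argument: $\Delta^{|k|}$ is a positive element whose support is all of $S$, while $A^+\cap P=A_X^+$ consists of positive elements supported on $X$) is essentially the argument one finds in the Garside-theoretic literature and would stand on its own.
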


The following fact will be also needed in Section \ref{sec_elem_eq_spher}.

\begin{fact}\label{center_dec_fact} Let $(A, S)$ be a spherical Artin system and let $A = A_1 \times \cdots \times A_n$ be the corresponding decomposition into irreducible factors. Then we have:
	$$A/Z(A) \cong A_1/Z(A_1) \times \cdots \times A_n/Z(A_n).$$
	This is a consequence of the First Isomorphism Theorem for groups, applied to the homomorphism $A\to A_1/Z(A_1)\times\cdots\times A_n/ Z(A_n)$. Note that $Z(A)=Z(A_1)\times Z(A_n)$.
\end{fact}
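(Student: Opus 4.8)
The plan is to reduce everything to the standard fact that the centre of a finite direct product of groups is the product of the centres of the factors, and then conclude with the First Isomorphism Theorem exactly as indicated in the statement. First I would recall that, with the convention on Coxeter graphs adopted in Section~\ref{prelim}, the decomposition $A = A_1 \times \cdots \times A_n$ into irreducible factors is a genuine (internal, equivalently external) direct product decomposition of groups, the $A_i$ being the standard parabolic subgroups corresponding to the connected components $\Gamma_i$ of $\Gamma$. So the whole statement is a fact about finite direct products and does not use anything specific to Artin groups.

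Next I would establish the auxiliary identity $Z(A) = Z(A_1) \times \cdots \times Z(A_n)$. The inclusion $\supseteq$ is immediate, since a tuple $(z_1, \dots, z_n)$ with $z_i \in Z(A_i)$ commutes coordinatewise with every element of $A$. For $\subseteq$, if $(z_1, \dots, z_n)$ is central in $A$, then commuting with the elements of the form $(e, \dots, a_i, \dots, e)$ forces $z_i$ to commute with every $a_i \in A_i$, whence $z_i \in Z(A_i)$. I would then consider the homomorphism $\varphi \colon A \to A_1/Z(A_1) \times \cdots \times A_n/Z(A_n)$ given by $(a_1, \dots, a_n) \mapsto (a_1 Z(A_1), \dots, a_n Z(A_n))$; being the product of the canonical projections $A_i \to A_i/Z(A_i)$, it is a surjective group homomorphism, and its kernel consists precisely of the tuples with each $a_i \in Z(A_i)$, i.e.\ $Z(A_1) \times \cdots \times Z(A_n) = Z(A)$ by the previous step. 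The First Isomorphism Theorem then yields $A/Z(A) \cong A_1/Z(A_1) \times \cdots \times A_n/Z(A_n)$.

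There is essentially no obstacle here: the argument is elementary and purely group-theoretic. The spherical hypothesis plays no role in the isomorphism itself; it is relevant only in that, by Theorem~\ref{censphAr}, it identifies each $Z(A_i)$ (hence $Z(A)$) with an explicit infinite cyclic subgroup, information that will be used elsewhere in Section~\ref{sec_elem_eq_spher} but not needed for this fact.
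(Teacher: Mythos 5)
Your proposal is correct and follows exactly the route sketched in the paper: identify $Z(A)$ with $Z(A_1)\times\cdots\times Z(A_n)$, then apply the First Isomorphism Theorem to the product of the canonical projections $A\to A_1/Z(A_1)\times\cdots\times A_n/Z(A_n)$. You have merely filled in the routine details that the paper leaves implicit.
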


It is known that spherical Artin groups are torsion-free (this is a consequence of the $K(\pi,1)$ conjecture holding for spherical Artin groups; see for instance \cite{ParConj}). However, taking the quotient by the center produces torsion elements, whose possible orders in the irreducible case are listed in \cite{Sor21}. We list the possible orders in Table \ref{ord_tors_elem}, as this will be relevant for the proof of Theorem~\ref{main_th3} in Section~\ref{sec_elem_eq_spher}.

\begin{table}
	\label{ord_tors_elem}
	\[
\begin{array}{cc}
\toprule
\text{Coxeter graph} & \text{orders of torsion elements} \\
\toprule
A_n,\,n\geq 2 & \text{all divisors of } n, n+1 \\
\midrule
B_n,\,n\geq 2 & \text{all divisors of } n \\
\midrule
D_n,\,n\text{ even}\geq 4 & \text{all divisors of } n-1, n/2 \\
\midrule
D_n,\,n\text{ odd}\geq 5 & \text{all divisors of } 2n-2, n \\
\midrule
E_6 & 2,3,4,6,8,9,12 \\
\midrule
E_7 & 3,7,9 \\
\midrule
E_8 & 2,3,4,5,6,10,12,15 \\
\midrule
F_4 & 2,3,4,6 \\
\midrule
H_3 & 3,5 \\
\midrule
H_4 & 2,3,5,6,10,15 \\
\midrule
I_2(m),\,m \text{ even}\geq 6 & \text{all divisors of } m/2 \\
\midrule
I_2(m),\,m \text{ odd}\geq 5 & \text{all divisors of } 2, m\\
\bottomrule
\end{array}
\]
\caption{Orders of torsion elements of irreducible spherical Artin groups modulo their center}
\end{table}

\section{When is an Artin group a domain?}

	\begin{definition} We say that $A$ is a domain if for every $x, y \in A$, whenever $x, y \neq e$ there is $g \in A$ such that $[x, y^g] \neq e$.
\end{definition}

We now recall and prove \ref{main_th1}. 

\begin{theorem1.3}
    Let $G$ be an acylindrically hyperbolic group without a non-trivial normal finite subgroup. Then $G$ is a domain.
\end{theorem1.3}

	\begin{proof}
    Assume by contradiction that $G$ is not a domain. Then it satisfies the following $\exists\forall$-sentence:
    \[
    \phi: \exists x\ne1\,\,\exists y\ne1\,\,\forall g\,\, [x,gyg^{-1}]=1.
    \]
    By \cite{andre_fruch}, $G$ has the same $\exists\forall$-theory as the free product $G*F_2$, hence $G*F_2\models  \phi$. But one can see that this is not possible, taking as $g$ a suitable element in the $F_2$ free factor.

\smallskip \noindent
Let us give an alternative proof relying on less sophisticated arguments. By \cite[Lemma 5.6]{hull}, there exist three loxodromic elements $g_1,g_2,g_3\in G$ such that, for every $i,j\in\lbrace 1,2,3\rbrace$ with $i\neq j$, the following conditions hold:
\begin{enumerate}[(1)]
    \item the maximal virtually cyclic subgroup $M(g_i)$ of $G$ containing $g_i$ is equal to $\langle g_i\rangle$;
    \item $\langle g_i\rangle \cap \langle g_j\rangle$ is trivial.
\end{enumerate}
Let $x,y$ be non-trivial elements of $G$. Pick $g_i$ with $i\in\lbrace 1,2,3\rbrace$ such that $x\notin M(g_i)$ and $y\notin M(g_i)$ (this is possible by the conditions above). By \cite[Lemma 3.3]{abbott_dahmani}, the element $[x,g_i^nyg_i^{-n}]=xg_i^nyg_i^{-n}x^{-1}g_i^ny^{-1}g_i^{-n}$ is non-trivial for $n\geq 1$ sufficiently large, and thus $G$ is a domain.
	\end{proof}

\begin{corollary1.5}
Irreducible spherical Artin groups modulo their center are domains.
\end{corollary1.5}
\begin{proof}
Let $A$ be an irreducible spherical Artin group. If the center of $A$ is generated by $\Delta^2$, then $A/Z(A)$ coincides with the group $\mathcal{G}$ of \cite{npc_art_fin} and by \cite[Theorem 4.13]{npc_art_fin} it does not contain any nontrivial finite normal subgroup. On the other hand, if $\Delta$ is central in $A$, then by the Third Isomorphism Theorem $A/Z(A)\simeq\mathcal{G}/C_2$, where $\mathcal{G}=A/\langle\Delta^2\rangle$ as in \cite{npc_art_fin} and $C_2 = \langle\Delta\rangle/\langle\Delta^2\rangle$. Therefore, any finite normal subgroup of $A/Z(A)$ lifts to a finite normal subgroup of $\mathcal{G}$. Applying again \cite[Theorem 4.13]{npc_art_fin}, we obtain that $A/Z(A)$ does not contain any nontrivial finite normal subgroup. In both cases, $A/Z(A)$ is an acylindrically hyperbolic group without nontrivial finite normal subgroups, and thus, by \ref{main_th1}, it is a domain. 
\end{proof}

\begin{corollary}\label{affine_dom}
    Irreducible affine Artin groups are domains.
\end{corollary}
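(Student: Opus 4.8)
The plan is to read off this corollary directly from Theorem~\ref{main_th1}, in the same spirit as the proof of Corollary~\ref{spherical_domain}, but with the additional simplification that there is no need to pass to a quotient by the center. The two ingredients I would use are both recalled in Section~\ref{intro_sub_affine}: first, every irreducible affine Artin group $A$ is acylindrically hyperbolic, by \cite{calvez22}; second, $A$ is torsion-free. The latter can be quoted from \cite{McCammondSulway17}, or alternatively deduced from the $K(\pi,1)$ conjecture for affine Artin groups (proved in \cite{P-Salvetti21}): when the conjecture holds and the defining Coxeter graph is finite, the Salvetti complex is a finite aspherical CW-complex, so $A$ has finite cohomological dimension and is therefore torsion-free.

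Given these two facts the argument is immediate. A torsion-free group contains no non-trivial finite subgroup whatsoever, and in particular no non-trivial finite normal subgroup, so $A$ satisfies the hypotheses of Theorem~\ref{main_th1}; we conclude that $A$ is a domain.

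I do not expect any substantive obstacle here: the entire content sits in the cited structural results for affine Artin groups together with Theorem~\ref{main_th1}. The only point worth a brief check is the borderline type $\widetilde A_1$, for which $A(\widetilde A_1)$ is the free group $F_2$; this group is hyperbolic and not virtually cyclic, hence acylindrically hyperbolic in the sense required by Theorem~\ref{main_th1}, and it is visibly torsion-free, so the same one-line argument applies uniformly across all irreducible affine types.
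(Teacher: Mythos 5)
Your argument is exactly the one in the paper: cite \cite{calvez22} for acylindrical hyperbolicity and \cite{McCammondSulway17} for torsion-freeness, then apply Theorem~\ref{main_th1}. The extra remarks on the $K(\pi,1)$ route to torsion-freeness and the $\widetilde A_1$ edge case are fine but not needed; the proof is correct and matches the paper's.
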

\begin{proof}
    By \cite{calvez22} and \cite{McCammondSulway17}, irreducible affine Artin groups are acylindrically hyperbolic and torsion-free. By \ref{main_th1}, then, they are domains.
\end{proof}

\section{Non-superstability of non-abelian Artin groups}

	The core of this section is an adaptation of the criterion for non-superstability from \cite{MPS22} which will allow us to prove Theorem~\ref{main_th2}. This criterion (namely \ref{general_criterion}) is of independent interest and could be applied in other contexts.

\begin{notation}\label{notation_strings} Given $(y_i : i < \omega)$ and $n < \omega$, let $\bar{y}_{[n)}= (y_i : i < n)$. Also, given $\{i_0, \dots, i_{k-1} \} = I \subseteq \{ 0, \dots, n-1 \}$ we let $\bar{y}_{I} = (y_{i_\ell} : \ell < k)$. The consistency of the two notations is obtained letting $\{0, \dots, n-1 \} = [0, n) = [n)$.
\end{notation}

	\begin{notation} Given $\eta, \nu \in \omega^{<\omega}$, we write $\nu \triangleleft\eta$ if $\eta$ extends $\nu$, and $\nu \trianglelefteq \eta$ if $\eta$ extends $\nu$ or $\eta = \nu$. Also, we identify the number $n < \omega$ with the set $\{0, \dots, n-1\}$.
\end{notation}

	\begin{definition}\label{def_unsuper} We say that the first-order theory $T$ is {\em not} superstable if:
\begin{enumerate}[(a)]
	\item\label{a} there are formulas $\varphi_n(x, \bar{y}_{[k_n)})$, for $n < \omega$ and $k_n = k(n) \geq n$;
	\item\label{b} there is $M \models T$;
	\item\label{c} there are $b_{\eta} \in M$, for $\eta \in \omega^{<\omega}$;
	\item\label{d} there are $\bar{a}_\nu \in M^{k(n)}$, for $\nu \in \omega^n$;
	\item\label{e} for $\nu \in \omega^n$ and $\eta \in \omega^{<\omega}$, $M \models \varphi_n(b_{\eta}, \bar{a}_\nu)$, if $\nu \triangleleft \eta$;
	\item\label{f} there is $m(n) < \omega$ such that if $\nu \in \omega^n$, $k, j < \omega$, and $\eta = \nu^\frown (k, j)$, then:
	$$|\{i < \omega: M \models \varphi_{n+1}(b_\eta, \bar{a}_{\nu^{\frown} (i)}) \} | \leq m(n).$$
\end{enumerate}
\end{definition}

	There are many equivalent definitions of superstability, and we use the above for convenience.
	A definition of superstability easier to understand uses types: $T$ is said to be superstable if it is $\kappa$-stable for every $\kappa \geq 2^{\aleph_0}$ (see e.g. \cite[pg. 172]{marker}), where a theory $T$ is said to be $\kappa$-stable when for every $M \models T$ and $A \subseteq M$ with $|A| = \kappa$ we have that the number of finitary types over $A$ is of size $\kappa$ (see e.g.~\cite[page 135]{marker}). An important example of a superstable structure is the abelian group~$\mathbb{Z}$.

	\begin{remark}\label{rmk_superst} {\rm In Definition~\ref{def_unsuper}(\ref{f}) we can take $m(n) = 1$, for every $n < \omega$. Indeed, without loss of generality $M$ is $\aleph_1$-saturated, and so it is enough to find $(\bar{a}_\eta, b_\eta : \eta \in \omega^{ \leq n+2})$, for every $n < \omega$. To this extent, let $\nu \in \omega^{n}$ and consider the function $f_\nu: \omega^3 \rightarrow \{ 0, 1\}$ such that $f_\nu(k, j, i) = 1$, if $k = i$ or $M \not \models \varphi_{n+1}(b_{\nu^{\frown} (k, j)}, \bar{a}_{\nu^{\frown} (i)})$, and $f_\nu(k, j, i) = 0$ otherwise. By the Infinite Ramsey Theorem there is an infinite $f$-homogeneous subset of $\omega$, and by clause (\ref{f}) of Definition~\ref{def_unsuper} this set has to have color $1$, and so we can conclude easily.}
\end{remark}

	\begin{remark}\label{rmk_superst+} {\rm In Definition~\ref{def_unsuper}(\ref{c}-\ref{d}) we can restrict to $\eta$'s and $\nu$'s in the set:
$$\mathrm{inc}_{<\omega}(\omega) = \{ \sigma \in \omega^{<\omega} : \sigma(0) < \sigma (1) < \cdots < \sigma(|\sigma|-1) \}.$$
We denote the subset of $\mathrm{inc}_{<\omega}(\omega)$ consisting of the sequences with $\{0, \dots, m-1\}$ as domain by $\mathrm{inc}_{m}(\omega)$; this notation will be used in the proof of Theorem~\ref{general_criterion}.}
\end{remark}

\begin{theorem}\label{general_criterion} Let $G$ be a group. A sufficient condition for the non-superstability of $G$ is that there are subgroups $N \trianglelefteq H \leq G$, $2 \leq n < \omega$, and $1 \leq k < \omega$ such that:
	\begin{enumerate}[(a)]
	\item if $a \in H$, then $X_a := \{ x \in G : x^n = a\} \subseteq H$, and $|X_a/N| \leq k$ (note that, throughout the theorem and in the following, the notation $Y/N$ for $Y$ a subset of $H$ not necessarily containing $N$ denotes the set $\{yN : y\in Y\}$);
	\item there are $a_\ell \in H \setminus N$, for $\ell < \omega$, such that $\ell_1 \neq \ell_2$ implies that:
	$$a^{-1}_{\ell_1} a_{\ell_2}N \notin \{ x^ny^nN : x, y \in H \}.$$
\end{enumerate}
\end{theorem}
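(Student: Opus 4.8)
The plan is to verify that $T := \mrm{Th}(G)$ satisfies Definition~\ref{def_unsuper}. By Remarks~\ref{rmk_superst} and~\ref{rmk_superst+}, it suffices to produce, for every $n < \omega$, a formula $\varphi_n(x,\bar{y}_{[n)})$ together with parameters $b_\eta \in G$ for $\eta \in \mathrm{inc}_{<\omega}(\omega)$ and $\bar{a}_\nu \in G^n$ for $\nu \in \mathrm{inc}_n(\omega)$, in such a way that the coherence clause~(e) holds and the boundedness clause~(f) holds for \emph{some} finite $m(n)$ (Remark~\ref{rmk_superst} would then even let us take $m(n)=1$, but we will not need this).

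For the parameters I would set $\bar{a}_\nu := (a_{\nu(0)}, \dots, a_{\nu(n-1)})$, with the $a_\ell$ as in hypothesis~(b), and define the $b_\eta$ by recursion on the length of $\eta$: $b_{(\ell)} := a_\ell$ and
\[
b_{(\ell_0,\ell_1,\dots,\ell_{m-1})} \;:=\; a_{\ell_0}\,\bigl(b_{(\ell_1,\dots,\ell_{m-1})}\bigr)^{n}.
\]
Each $b_\eta$ is a word in the $a_\ell$'s, hence lies in $H$. The formula $\varphi_n(x,y_0,\dots,y_{n-1})$ is the existential formula asserting that $x$ can be ``unwound'' by $n$ successive steps, each one consisting of left-dividing by some $y_t$ and then extracting an $n$-th root:
\[
\varphi_n(x,\bar{y}) \;:=\; \exists z_1\cdots\exists z_n\ \Bigl( x = y_0\,z_1^{\,n}\ \wedge\ \bigwedge_{t=1}^{n-1} z_t = y_t\,z_{t+1}^{\,n}\Bigr).
\]

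Clause~(e) is then essentially immediate: if $\nu \triangleleft \eta$, say $\eta = (\eta_0,\dots,\eta_{m-1})$ and $\nu = \eta \restriction n$ with $m>n$, then the recursive definition of $b_\eta$ says precisely that the elements $z_t := b_{(\eta_t,\dots,\eta_{m-1})}$, for $1 \le t \le n$, witness $\varphi_n(b_\eta,\bar{a}_\nu)$; they lie in $G$ because $m > n$. Clauses~(a)--(d) are bookkeeping, so the whole weight of the argument is clause~(f). Fix $\nu \in \mathrm{inc}_n(\omega)$ and $\eta = \nu^\frown(k,j)$, and suppose $\varphi_{n+1}(b_\eta,\bar{a}_{\nu^\frown(i)})$ holds with witnesses $z_1,\dots,z_{n+1}$. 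Unwinding from the top, $z_1^n = a_{\nu(0)}^{-1}b_\eta \in H$, so by hypothesis~(a) we get $z_1 \in H$ with $z_1 N$ ranging over at most $k$ cosets; inductively $z_{t+1}^n = a_{\nu(t)}^{-1}z_t \in H$, so each $z_{t+1} \in H$, again with boundedly many cosets once $z_t$ is fixed. In particular $z_n$ lies in a set $W \subseteq H$ depending only on $\nu,k,j$ --- crucially \emph{not} on $i$ --- which I claim meets only finitely many cosets of $N$, uniformly in $\nu,k,j$. Granting this, the last equation $z_n = a_i z_{n+1}^{\,n}$, together with $z_{n+1}^n = a_i^{-1}z_n \in H$ forcing $z_{n+1} \in H$, gives $a_i = z_n\,(z_{n+1}^{-1})^{n} \in W\cdot\{\,y^{\,n}: y \in H\,\}$; hence $a_i N$ lies in one of the finitely many ``columns'' $wN\cdot\{\,y^nN : y\in H\,\}$, one for each $N$-coset of $W$. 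If there were more such $i$ than there are columns, two of them $i \ne i'$ would fall in the same column, and then $a_{i'}^{-1}a_i N = (y_{i'}^{-1})^{n}\,y_i^{\,n}\,N \in \{\,x^ny^nN : x,y\in H\,\}$ (sliding $N$ past the powers via $N \trianglelefteq H$), contradicting hypothesis~(b). Thus~(f) holds with $m(n) = |WN/N|$.

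The step I expect to be the main obstacle is exactly the claim inside~(f) that the set $W$ of possible bottom-level witnesses meets only finitely many cosets of $N$. The delicate point is that hypothesis~(a) bounds $|X_a/N|$ while $X_a$ itself may be infinite, so a priori the $N$-ambiguity could proliferate when one passes from $z_t$ to $z_{t+1}$: each such step is morally ``extract an $n$-th root'', and an element of $H/N$ can have infinitely many $n$-th roots even though every single $X_a$ has bounded image. One has to argue the estimate level by level inside $H/N$, using $N \trianglelefteq H$ together with~(a), to show that the $N$-cosets of $z_{t+1}$ are controlled by the $N$-coset of $z_t$ up to a factor of $k$, so that iterating $n$ times bounds $|WN/N|$ by $k^{O(n)}$, uniformly. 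This is the technical heart of the argument, and it is precisely where the present statement refines and adapts the non-superstability criterion of \cite{MPS22}.
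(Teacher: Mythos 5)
Your construction is essentially identical to the paper's: you take the same $\bar{a}_\nu$, the same $b_\eta$ (your recursion $b_{(\ell_0,\dots,\ell_{m-1})} = a_{\ell_0}\bigl(b_{(\ell_1,\dots,\ell_{m-1})}\bigr)^n$ unwinds to the paper's $w_m(a_{\eta(m)}, \bar{a}_{\eta\restriction m})$), and an equivalent formula $\varphi_n$ written with explicit intermediate witnesses $z_1,\dots,z_n$ in place of the paper's single quantifier $\exists z\,(x = w_n(z,\bar y))$. Your verification of clause~(e) and your ``column'' argument deducing clause~(f) from the finiteness of $|WN/N|$ match the paper's steps $(\star_6)$ and $(\star_9)$ essentially verbatim.

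The genuine gap is the bound on $|WN/N|$, which you explicitly flag as the main obstacle and then leave unresolved. This is precisely the paper's lemma $(\star_8)$: writing $C^\eta_b := \{\,c \in G : b = w_{|\eta|}(c, \bar{a}_\eta)\,\}$ (so your $W$ is $C^\nu_{b_\eta}$), the paper proves by induction on $|\eta|$ that $C^\eta_b \subseteq H$ and $|C^\eta_b/N| \leq k^{|\eta|}$. The inductive step considers the map $c \mapsto d_c := a_{\eta(\ell)}c^n$ from $C^\eta_b$ into $C^{\eta\restriction\ell}_b$; the fiber over a given $d$ is contained in $X_{a_{\eta(\ell)}^{-1}d}$, which lies in $H$ and meets at most $k$ cosets of $N$ by hypothesis~(a), yielding $|C^\eta_b/N| \leq k\cdot|C^{\eta\restriction\ell}_b/N|$. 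Your intuition about the shape of the answer ($k^{O(n)}$, uniform in $\nu, k, j$) and about what must be controlled ($N$-ambiguity level by level through the root extractions, using $N\trianglelefteq H$ together with (a)) is exactly right, but this inductive estimate is the technical heart of the theorem and you stop short of proving it. Without it, clause~(f) of Definition~\ref{def_unsuper} is not established, so the proposal is an accurate reconstruction of the proof's architecture but not a complete proof.
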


	\begin{proof} Let $n$ and $k$ be as in the statement of the theorem. By induction on $m < \omega$, we define the group word $w_m(z, \bar{y}_{[m)})$ (recall Notation~\ref{notation_strings}) as follows:
	\begin{enumerate}[(i)]
	\item $w_0(z) = z$;
	\item $w_{m+1}(z, \bar{y}_{[m+1)}) = w_m(y_mz^n, \bar{y}_{[m)})$.
	\end{enumerate}
	Notice that if $m < \omega$ is such that $m = m_1 + m_2$, then:
	$$w_m(x, \bar{y}_{[m)}) = w_{m_2}(w_{m_1}(x, \bar{y}_{[m_2,m)}), \bar{y}_{[m_2)}).$$
Let now $\varphi_m(x, \bar{y}_{[m)})$ be the formula:
	\begin{equation}\label{equation1} \exists z(x = w_m(z, \bar{y}_{[m)})),
\tag{$\star_1$}  
\end{equation}
(clearly for $m = 0$, the set $[m, m)$ is simply $\varnothing$ and so $w_0(z, y_{[m, m)}) = w_0(z) = z$.)
	Notice that:
	\begin{equation}\label{equation2}
	\varphi_{m+1}(x, \bar{y}_{[m+1)}) \vdash \varphi_{m}(x, \bar{y}_{[m)}) \vdash \cdots \tag{$\star_2$}  \end{equation}
	We claim that $(\varphi_{m}(x, \bar{y}_{[m)}) : m < \omega)$ is a witness for the non-superstability of $G$, referring here to Definition~\ref{def_unsuper} (cf. also Remarks~\ref{rmk_superst}~and~\ref{rmk_superst+}).

\smallskip
\noindent Let $a_\ell \in H \setminus N$, for $\ell < \omega$, be as in the statement of the theorem. Now, for $m < \omega$ and $\nu \in \mathrm{inc}_m(\omega)$ (cf. Remark~\ref{rmk_superst+}), let:
\begin{enumerate}[$(\star)_3$]
\item $\bar{a}_\nu = (a_{\nu(\ell)} : \ell < m) \in H^{m}$.
\end{enumerate}
For $m < \omega$ and $\eta \in \mathrm{inc}_{m+1}(\omega)$ we have:
\begin{equation}\label{equation4} b_\eta = w_m(a_{\eta(m)}, \bar{a}_{\eta \restriction m}) \in H
\tag{$\star_4$}. \end{equation}
Clearly clauses (\ref{a})-(\ref{d}) of Definition~\ref{def_unsuper} hold. Furthermore, we have:
\begin{equation}\label{equation5}
\eta \in \mathrm{inc}_{m+1}(\omega) \;\; \Rightarrow \;\; G \models \varphi_m(b_\eta, \bar{a}_{\eta\restriction m}).
\tag{$\star_5$} \end{equation}
Notice that $(\star_5)$ is immediate by the choice of $b_{\eta}$.
\newline More strongly, we have:
\begin{equation}\label{equation6} \eta \in \mathrm{inc}_{k}(\omega), \; \nu \triangleleft \eta \;\; \Rightarrow \;\; G \models \varphi_{|\nu|}(b_\eta, \bar{a}_{\nu}).
\tag{$\star_6$} \end{equation}
Notice that $(\star_6)$ is by $(\star)_2$ and $(\star)_5$.
\newline Further, we have that if $b \in H$ and $\eta \in \mathrm{inc}_{m}(\omega)$, then letting:
\begin{equation}\label{equation7} C^\eta_b := \{ c \in G : G \models b = w_{m}(c, \bar{a}_{\eta}) \},
\tag{$\star_7$} \end{equation}
we have:
\begin{equation}\label{equation8} C^\eta_b \subseteq H \; \text{ and } \; |C^\eta_b/N| \leq k^m.
\tag{$\star_8$} \end{equation}
We prove $(\star_8)$ this by induction on $m = |\eta|$ using clause (a) of the theorem. For $m = 0$ this is obvious. For $m = \ell +1$, let $\nu = \eta \restriction \ell$. Recall that by induction we have that $|C^\nu_b/N| \leq k^\ell$. Further, clearly, $\eta \in \mathrm{inc}_{\ell+1}(\omega)$. Now, if $G \models b = w_{\ell+1}(c, \bar{a}_\eta)$, then, letting $d_c = a_{\eta(\ell)}c^n$, we have $G \models b = w_\ell(d_c, a_{\eta \restriction \ell})$, and thus $d_c \in C^\nu_\ell \subseteq H$ (by induction). That is, we have a function $c \mapsto d_c$ from $C^\eta_b$ into $C^\nu_b$. Hence, it suffices to prove that for each $d \in C^\nu_{b}$ we have:
$$\mathcal{D}_d := \{ c \in C^\eta_b : d_c = d\} \subseteq H \; \text{ and } \; |\mathcal{D}_d/N| \leq k,$$
since then we would have:
	$$|C^\eta_b/N| \leq |C^\nu_b/N|k \leq k^\ell k = k^{\ell+1} = k^m.$$
Let then $d \in C^\nu_{b}$ and $c \in \mathcal{D}_d$. Since $a_{\eta(\ell)}c^n = d_c = d\in H$ we have that $c^n = a^{-1}_{\eta(\ell)}d_c \in H$ (recall that $H$ is a subgroup), thus, by clause (a) of the theorem's statement, we have that $c \in H$.
Thus, $\mathcal{D}_d \subseteq \{ x \in G : x^n = a^{-1}_{\eta(\ell)}d \} = X_{a^{-1}_{\eta(\ell)}d}$, and by clause~(a) of the theorem's statement we have that $|X_{a^{-1}_{\eta(\ell)}d}/N| \leq k$, and so $|\mathcal{D}_d/N| \leq k$.]
\newline Finally, we have that if $\nu \in \omega^m$, $k, j < \omega$, and $\eta = \nu^\frown (k, j)$, then:
\begin{equation}\label{equation9}
|\{i < \omega: G \models \varphi_{m+1}(b_\eta, \bar{a}_{\nu^{\frown} (i)}) \} | \leq k^m.
\tag{$\star_9$} \end{equation}
We prove $(\star_9)$. Let $\mathcal{U}^m_\eta := \{i < \omega: G \models \varphi_{m+1}(b_\eta, \bar{a}_{\nu^{\frown} (i)}) \}$, and for every $i \in \mathcal{U}^m_\eta$, choose $c_i \in G$ such that:
$$G \models b_\eta = w_{m+1}(c_i, \bar{a}_{\nu^{\frown} (i)}).$$
Note that, for $i \in \mathcal{U}^m_\eta$, $c_i \in H$, since $b_\eta \in H$ (cf. $(\star)_4$) and $c_i \in C^{\nu^{\frown} (i)}_{b_\eta}$, and, by clause $(\star)_8$, we have that  $C^{\nu^{\frown} (i)}_{b_\eta} \subseteq H$. Further, for $i \in \mathcal{U}^m_\eta$, we have:
$$G \models b_\eta = w_{m}(a_{i}c_i^n, \bar{a}_\nu),$$
and so $a_{i}c_i^n \in C^\nu_{b_\eta}$ (recall that $\nu^{\frown} (i)(m) = i$). For the sake of contradiction, assume that $|\mathcal{U}^m_\eta| > k^m$. By $(\star)_8$ we have that
$|C^\nu_{b_\eta}/N| \leq k^m$, and so for some $i \neq j \in \mathcal{U}^m_\eta$ we have $a_ic_i^nN = a_jc_j^nN$. Thus, we have:
\begin{equation}\label{equation10}
a_j^{-1}a_iN = (c_j)^n(c^{-1}_i)^nN = (c_j)^nN(c^{-1}_i)^nN.
\tag{$\star_{10}$} \end{equation}
But then, since $c_i, c_j \in H$ (as observed above), the conclusion $(\star)_{10}$ is in contradiction with clause (b) of the statement of the theorem. Thus, $(\star)_{9}$ holds indeed.
\newline Hence, by $(\star)_6$ and $(\star)_9$, conditions (\ref{e}) and (\ref{f}) of Definition~\ref{def_unsuper} are also satisfied.
\end{proof}

	\begin{definition}\label{def_pure} Let $H \leq G$ and $2 \leq n < \omega$. We say that $H$ is $n$-pure in $G$ if, for every $a \in H$, whenever there is $b \in G$ such that $b^n = a$ we have that $b \in H$.
\end{definition}

	\begin{lemma}\label{intermediate_lemma} Let $G$ be a group, with subgroups $N \trianglelefteq H \leq G$, and suppose that and $H/N$ is virtually free non-abelian. Let $F = K/N \trianglelefteq H/N = P$ be free non-abelian of finite index $m$ (notice that the assumption of normality of $F$ in $P$ is without loss of generality, as we can always replace $F$ with $\bigcap \{g F g^{-1} : g \in P \}$) and suppose further that $H$ is an $n$-pure subgroup of $G$ (recall \ref{def_pure}) for some prime $n > m$. Suppose further that for any such $n > m$ there are $a_\ell \in H \setminus N$, for $\ell < \omega$, such that $\ell_1 \neq \ell_2$ implies that $a^{-1}_{\ell_1} a_{\ell_2}N \notin \{ x^nNy^nN : x, y \in H \}$. Then for any such prime number $n > m$, letting $k = 1$, the criterion from \ref{general_criterion} is satisfied.
\end{lemma}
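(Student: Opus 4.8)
The plan is to instantiate Theorem~\ref{general_criterion} with the subgroups $N \trianglelefteq H \leq G$ already given, with the integer equal to the given prime $n > m$ (note $n \geq 2$ since $m \geq 1$), and with $k = 1$, and then to check clauses (a) and (b). Clause (b) is literally the hypothesis on the elements $a_\ell$, once one observes that $\{x^n y^n N : x,y \in H\} = \{x^n N y^n N : x,y \in H\}$ because $N \trianglelefteq H$. In clause (a), the inclusion $X_a \subseteq H$ for $a \in H$ is precisely the assumption that $H$ is $n$-pure in $G$ (Definition~\ref{def_pure}); so the only genuine work is the bound $|X_a/N| \leq 1$. Since for $x \in X_a$ the coset $xN \in P := H/N$ satisfies $(xN)^n = aN$, the set $X_a/N$ injects into the set of $n$-th roots of $aN$ in $P$, and it therefore suffices to prove: \emph{in the virtually free group $P$, for the prime $n > m = [P : F]$, every element has at most one $n$-th root.}

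I would establish this by cases. If $m = 1$, then $P = F$ is free and $n$-th roots are unique there: if $g^n = h^n \neq e$, then $g$ and $h$ both lie in the centralizer of $g^n$, which is infinite cyclic, so $g = z^p$, $h = z^q$, and $z^{pn} = z^{qn}$ forces $p = q$; while $g^n = e$ forces $g = e$ by torsion-freeness. If $m \geq 2$, then $n > m \geq 2$ is an \emph{odd} prime, and I split on the order of the target $a$. If $a$ has finite order $d$, then $d \mid m$ (a finite cyclic subgroup of $P$ meets the torsion-free $F$ trivially, hence injects into $P/F$), so $\gcd(n,d) = 1$; likewise any $n$-th root $g$ of $a$ is torsion with $|g| \mid m$, hence coprime to $n$, so $\langle g\rangle = \langle g^n\rangle = \langle a\rangle$ and $g = a^j$ with $jn \equiv 1 \pmod d$; comparing two roots $g = a^j$, $h = a^{j'}$ and cancelling $n$ modulo $d$ gives $j \equiv j' \pmod d$, i.e.\ $g = h$.

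The substantial case is $a$ of infinite order. Then every $n$-th root of $a$ commutes with $a$, so all of them lie in $D := C_P(a)$, and the key point is that $D$ is virtually infinite cyclic with a canonical $\mathbb{Z}$ inside: pick $k \leq m$ with $a^k \in F$; since $a^k$ is a non-trivial element of the free group $F$, its centralizer $C_F(a^k)$ is infinite cyclic, and $C := D \cap F$ sits inside $C_F(a^k)$, contains $a^k$ (hence is non-trivial), is normal in $D$ (as $F \trianglelefteq P$), and has index $\leq m$ in $D$; so $C \cong \mathbb{Z}$ and $|D/C| \leq m < n$. As $n$ is prime and $n \nmid |D/C|$, the $n$-th power map is a bijection of the finite group $D/C$, so $g^n C = aC = h^n C$ yields $gC = hC$, say $h = gc$ with $c \in C$. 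Finally $g$ normalizes $C \cong \mathbb{Z}$, hence acts on $C$ as $+1$ or $-1$: in the first case $\langle g, C\rangle$ is abelian and $g^n = h^n = g^n c^n$ forces $c^n = e$, so $c = e$; in the second case a short induction (this is where oddness of $n$ is used) gives $(gc)^n = g^n c$, so $a = h^n = ac$ and again $c = e$. Either way $g = h$, which proves the claim, clause (a), and the lemma.

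The crux is the middle of the last paragraph: identifying $C_P(a)$ for infinite-order $a$ as virtually infinite cyclic and extracting from $F$ a normal $\mathbb{Z}$-subgroup of index dividing $m$. Once that is in place, primality of $n$ and the inequality $n > m$ do the rest of the work — both the bijectivity of the $n$-th power map on $D/C$ and the vanishing of $c$ in the inversion subcase. Everything else is elementary finite-group and free-group arithmetic, and the hypotheses of the lemma enter only through clause (b) and the $n$-purity of $H$.
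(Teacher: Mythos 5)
Your proposal is correct, and the overall skeleton (clause (b) is the hypothesis verbatim; clause (a) splits into the $n$-purity of $H$ in $G$ plus the bound $|X_a/N|\leq 1$, which reduces to uniqueness of $n$-th roots in $P=H/N$) matches the paper. Where you diverge is in how that root-uniqueness is established. The paper argues in two steps: first it shows that $F=K/N$ is $n$-pure in $P$ (via the order argument in $P/F$, exactly as in your finite-order case), and then it invokes uniqueness of $n$-th roots in the free group $F$ to conclude that $aN$ cannot have two distinct $n$-th roots in $P$. As written, that second step only directly covers elements $aN$ lying in $F$ (or forces one through a further root-extraction of exponent $d\mid m$ after raising to the $d$-th power), whereas clause (a) requires uniqueness for arbitrary $a\in H$. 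Your argument instead proves outright that every element of the virtually free group $P$ has at most one $n$-th root for a prime $n>[P:F]$: the torsion case by the coprimality argument, and the infinite-order case by locating all roots in $D=C_P(a)$, exhibiting the normal infinite cyclic $C=D\cap F\leq C_F(a^k)$ of index at most $m$, using bijectivity of the $n$-th power map on $D/C$, and then disposing of the residual $c\in C$ via the $\pm 1$ action of $g$ on $C$ (with oddness of $n$ handling the inversion case). This is more work than the paper's reduction, but it is self-contained and covers the case $aN\notin F$ that the paper's phrasing glosses over; all the individual steps (centralizers in free groups are cyclic, finite subgroups inject into $P/F$, the power-map bijection on a finite group of order coprime to $n$, the computation $(gc)^n=g^nc$ for odd $n$ under inversion) check out.
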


	\begin{proof} We have to verify that for our choices of $N$, $H$, $n$ and $k =1$ we have:
	\begin{enumerate}[(a)]
	\item if $a \in H$, then $X_a := \{ x \in G : x^n = a\} \subseteq H$, and $|X_a/N| \leq k$;
	\item there are $a_\ell \in H \setminus N$, for $\ell < \omega$, such that $\ell_1 \neq \ell_2$ implies that:
	$$a^{-1}_{\ell_1} a_{\ell_2}N \notin \{ x^nNy^nN : x, y \in H \}.$$
\end{enumerate}
Condition (b) is true by assumption, so we are left with (a). So let $a \in H$. The fact that $X_a := \{ x \in G : x^n = a\} \subseteq H$ is by assumption. We now show that $|X_a/N| \leq k = 1$. It suffices to show that $K/N$ is $n$-pure in $H/N$, in fact suppose that this is the case and that there are $x_1, x_2 \in X_a$ such that $x_1N \neq x_2N$, then:
$$(x_1N)^n = aN, (x_2N)^n = aN, x_1N \neq x_2N,$$
but then $K/N$ cannot be $n$-pure in $H/N$, as by assumption $K/N$ is a free group and $n$-roots are unique in free groups. We then are then left to show that $F = K/N$ is $n$-pure in  $H/N = P$. Suppose that $w \in P$ is such that $w^n \in F$, then we have that $P/F \models (wF)^n = e_{P/F}$ (recall that we assume that $F$ is normal in $P$), but then the order of $wF$ in $P/F$ divides a prime number which is bigger than all the orders of elements from $P/F$ (since $n > [P : F]$).
Hence, $P/F \models wF = e_{P/F}$, that is $w \in F$, as desired. This completes the proof of (a), and thus of the lemma.
\end{proof}

	\begin{proposition}\label{simon_prop} Let $B$ be a free product of cyclic groups which is not abelian-by-finite and let $n \geq 1$. Then there are $a_\ell \in B$, for $\ell < \omega$, such that $\ell_1 \neq \ell_2$ implies that:
	$$a^{-1}_{\ell_1} a_{\ell_2} \notin \{ x^n y^n : x, y \in B \}.$$
\end{proposition}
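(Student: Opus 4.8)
The plan is to combine a reduction to a finitely generated non-elementary free product, a greedy construction of the sequence $(a_\ell)_{\ell<\omega}$, and a density estimate showing that the ``bad set'' $P:=\{x^ny^n:x,y\in B\}$ is too thin to obstruct that construction. (The case $n=1$ is vacuous, since then $P=B$, so I assume $n\geq 2$.) For the first step, note that if $B=\ast_{i\in I}C_i$ is not abelian-by-finite then it has at least two non-trivial factors and is not $\mathbb{Z}/2\ast\mathbb{Z}/2$; choosing finitely many factors appropriately (two of them, one of order $\geq 3$; or three of them if every factor has order $2$) produces a retraction $r\colon B\twoheadrightarrow B'$ onto a finitely generated non-elementary free product of cyclic groups $B'$. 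The desired property descends along $r$: if $(a_\ell)\subseteq B'$ works in $B'$ and $a_{\ell_1}^{-1}a_{\ell_2}=x^ny^n$ with $x,y\in B$, then applying $r$ gives $a_{\ell_1}^{-1}a_{\ell_2}=r(x)^nr(y)^n$ in $B'$, a contradiction. So I may assume $B$ is finitely generated and non-elementary; in particular $B$ acts on its Bass--Serre tree $T$ with trivial edge stabilizers (hence non-trivial elliptic elements fix a unique vertex), and $B$ has exponential growth: $c\lambda^R\leq |\mathrm{Ball}(R)|\leq C\lambda^R$ with $\lambda>1$, for the ball of radius $R$ in a word metric.

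Next I reduce the construction to a smallness statement. Since $P^{-1}=P$, it suffices to build $(a_\ell)$ recursively, where at stage $\ell$ one must pick $a_\ell\notin\bigcup_{i<\ell}\bigl(a_iP\cup Pa_i\bigr)$ with $a_\ell\neq a_i$; this succeeds at every stage provided $P$ is \emph{not syndetic}, i.e.\ no finite union of left and right translates of $P$ equals $B$. And $P$ is not syndetic once it has density $0$ with respect to the word metric: if $k$ translates covered $B$, then $|\mathrm{Ball}(R)|\leq k\,|P\cap\mathrm{Ball}(R+D)|$ (with $D$ bounding the translating elements), which together with $|\mathrm{Ball}(R+D)|\leq\lambda^D|\mathrm{Ball}(R)|$ contradicts $|P\cap\mathrm{Ball}(S)|=o(|\mathrm{Ball}(S)|)$.

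The heart is therefore the density estimate. Writing each $z\in B$ in standard form $z=p\,\tilde z\,p^{-1}$ with $\tilde z$ cyclically reduced, one has $z^n=p\,\tilde z^{\,n}p^{-1}$, and $\tilde z^{\,n}$ is either trivial, or a single factor element (when $z$ is elliptic), or a genuinely periodic word of period $\ell(\tilde z)$ repeated $n$ times (when $z$ is loxodromic); consequently the reduced word of any $g=x^ny^n$ is, after the cancellations at its internal seams, a concatenation of a bounded number of ``transient'' blocks (the conjugators $p^{\pm1},q^{\pm1}$) and at most two ``$n$-periodic'' blocks. I would then count elements of $P\cap\mathrm{Ball}(R)$ by enumerating these shapes: in the main regime the two periodic blocks have length roughly $n$ times their periods, so a period of length $L$ costs only $\lambda^{L}$ choices while consuming $nL$ of the length budget $R$; and the conjugators are not free parameters, since $p$ (resp.\ $q$) occurs both as a prefix of the reduced word of $g$ and, inverted, further inside that word, so they too are pinned down by $g$ up to bounded data. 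Packaging this — and handling separately the degenerate cases where a factor is elliptic, where $g$ is a conjugate of a proper $n$-th power, and where one $n$-th power cancels heavily into the other — yields $|P\cap\mathrm{Ball}(R)|\leq\mathrm{poly}(R)\,\lambda^{(1-\varepsilon)R}$ for some $\varepsilon=\varepsilon(n)>0$, hence density $0$, completing the proof.

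The step I expect to be the main obstacle is exactly this final count, specifically the heavy-cancellation regime, where a conjugator can a priori be as long as $g$ itself. The resolution I would pursue is that heavy cancellation between $x^n$ and $y^n$ forces one conjugator (inverted) to reappear as a suffix of the reduced word of $g$, so it remains determined by $g$ up to a bounded amount of data, keeping the count sub-exponential. As a sanity check, one sees $\varepsilon(n)$ can be taken close to $1/2$ for all $n\geq 2$, so primeness of $n$ (as needed in Lemma~\ref{intermediate_lemma}) plays no role here. Finally, I note that in the special case where $B$ admits an infinite quotient of exponent dividing $n$ (e.g.\ $B$ free and $n$ large, by the Novikov--Adian/Ol'shanskii theorem), one can bypass the count entirely: there $P$ maps to $\{e\}$, so $P$ lies in a subgroup of infinite index and is automatically non-syndetic; but this shortcut is unavailable when the finite factors of $B$ have orders coprime to $n$, whereas the counting argument applies uniformly.
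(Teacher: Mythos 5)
Your overall architecture --- retract onto a finitely generated non-elementary free product, reduce the construction of $(a_\ell)$ to non-syndeticity of $P=\{x^ny^n\}$, and reduce that to a density-zero estimate via purely exponential growth --- is sound, and it is a genuinely different route from the paper's. But there is a real gap at exactly the step you flag: the estimate $|P\cap\mathrm{Ball}(R)|\leq\mathrm{poly}(R)\,\lambda^{(1-\varepsilon)R}$ is the entire mathematical content of the proposition under your approach, and it is asserted rather than proved. The difficulty is not cosmetic. A single $g$ can have infinitely many representations $g=x^ny^n$ (already $g=e$ does), so you cannot bound $|P\cap\mathrm{Ball}(R)|$ by counting tuples $(p,\tilde x,q,\tilde y)$ of bounded total length; you must show that every $g\in P$ with $|g|\leq R$ admits \emph{some} representation of description length at most $(1-\varepsilon)R+O(1)$, or otherwise attach to each such $g$ a short certificate. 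In the heavy-cancellation regime the conjugators and even the periodic blocks can be entirely absorbed --- e.g.\ when $y^n=x^{-n}g$ with $x$ much longer than $g$, nothing of $x$ survives in the reduced word of $g$ --- so your proposed resolution (``one conjugator reappears inverted as a suffix of $g$'') is not justified as stated. Completing this amounts to a counting theorem about solutions of the quadratic-type equation $x^ny^n=g$, in the spirit of the literature on products of squares and commutators in free groups; it is plausible, but it must be proved or cited, and as submitted your argument is a plan rather than a proof.

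For comparison, the paper's proof is the ``shortcut'' you mention only in your last sentence: for $n$ large enough the Burnside-type quotient $B/B^n$ is infinite (Ivanov--Ol'shanskii, Coulon), and preimages of distinct cosets work since $P$ maps to the identity. Note that this only addresses ``$n$ large enough'' rather than all $n\geq 1$ as literally stated (the statement is in any case false for $n=1$, as you observe), and your parenthetical caveat --- that the quotient argument is delicate when the finite factors of $B$ have order coprime to $n$, since then those factors die in $B/B^n$ (e.g.\ $B=C_2*C_m$ with $n$ a prime exceeding $m$, which is precisely the situation arising from Lemmas~\ref{intermediate_lemma} and~\ref{dihedral_artin}) --- is a pertinent observation about the quotient route. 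A completed version of your counting argument would be uniform in $n\geq 2$ and would sidestep that issue, but the key estimate is currently missing.
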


	\begin{proof}
For $n$ large enough, the quotient group $B/B^n$ is infinite (see for instance \cite{ivanOlsh} and \cite{coulon}). Let $(a'_i)_{i\in\mathbb{N}}$ be a sequence of element of $B/B^n$ such that $a'_i \neq a'_j$ if $i \neq j$, and let $a_i$ be a preimage of $a'_i$. Then the sequence $(a_i)_{i\in\mathbb{N}}$ has the required property.
\end{proof}

	\begin{lemma}\label{dihedral_artin} Let $H = A(m)$ be the Artin group of dihedral type $I_2(m)$, for $m \geq 3$, and let $N = Z(A)$. Then $H/N = B$ is a free product of cyclic groups which is not abelian-by-finite.
\end{lemma}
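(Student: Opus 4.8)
The plan is to identify $B = A(m)/Z(A(m))$ completely by a short sequence of Tietze transformations, treating the two parities of $m$ separately, and then to observe that the answer is in each case a free product of two cyclic groups which contains a nonabelian free subgroup.

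I would start from the standard presentation $A(m) = \langle a, b \mid \Pi_a = \Pi_b \rangle$, where $\Pi_a$ and $\Pi_b$ denote the alternating words of length $m$ starting with $a$ and with $b$ respectively. By Theorem~\ref{censphAr}, $N = Z(A(m))$ is infinite cyclic, generated by $\Delta := \Pi_a = \Pi_b$ when $m$ is even and by $\Delta^2$ when $m$ is odd. I will use two elementary identities: when $m$ is even, $\Delta = (ab)^{m/2}$ literally as a word; when $m = 2r+1$ is odd, $\Delta = (ab)^r a = (ba)^r b$, whence $\Delta^2 = \Pi_a\Pi_b = (ab)^r a\,(ba)^r b = (ab)^{2r+1} = (ab)^m$, using the free-group identity $a(ba)^r = (ab)^r a$. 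Since $N$ is central, hence normal, $B$ is presented by the generators and relations of $A(m)$ together with the single extra relation that kills the chosen generator of $N$.

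In the even case (so $m \geq 4$), adding $(ab)^{m/2} = e$ also kills $\Pi_b = (ba)^{m/2} = a^{-1}(ab)^{m/2}a$, so the relation $\Pi_a = \Pi_b$ becomes redundant and $B = \langle a, b \mid (ab)^{m/2} = e\rangle$; substituting $c := ab$ and eliminating $b = a^{-1}c$ gives $B \cong \langle a, c \mid c^{m/2} = e\rangle \cong \mathbb{Z} * (\mathbb{Z}/(m/2)\mathbb{Z})$, with $m/2 \geq 2$. In the odd case $m = 2r+1 \geq 3$, adding $(ab)^m = e$ and substituting $c := ab$, $b = a^{-1}c$ turns the defining relation $\Pi_a = \Pi_b$ into $a c^r a = c^{r+1}$; now put $t := a c^r$, so that this relation reads $ta = c^{r+1}$ and thus $a = tc^{-r}$ after using $c^{2r+1} = e$. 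Rewriting the remaining relations in the generators $t, c$ one finds that everything reduces to $t^2 = e$ and $c^m = e$, so $B \cong \langle t, c \mid t^2 = e,\ c^m = e\rangle \cong (\mathbb{Z}/2\mathbb{Z}) * (\mathbb{Z}/m\mathbb{Z})$.

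Finally, in both cases $B$ is a free product of two cyclic groups, neither of which is trivial and not both of which have order $2$ (in the even case the first factor is infinite, in the odd case the second has order $m \geq 3$). Such a free product is well known to contain a nonabelian free subgroup --- one way to see this is that the kernel of the obvious surjection of $B$ onto the (finite or infinite) direct product of its two factors acts freely on the Bass--Serre tree, hence is free, and an Euler characteristic count shows its rank is at least $2$ --- and a group containing $F_2$ cannot be abelian-by-finite, since a finite-index abelian subgroup would meet $F_2$ in a finite-index, hence nonabelian, subgroup. This yields both conclusions of the lemma. I do not expect a genuine obstacle here: the whole argument is bookkeeping with Tietze transformations plus the description of $Z(A(m))$ recalled from Theorem~\ref{censphAr}; the one place to be careful is the odd case, where one must check that, in the generators $t$ and $c$, no relation survives beyond $t^2 = e$ and $c^m = e$.
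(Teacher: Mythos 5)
Your proposal is correct and follows essentially the same route as the paper: both identify $Z(A(m))$ via Theorem~\ref{censphAr}, pass to the generators $a$ and $ab$ (resp.\ a square root of the center in the odd case), and conclude $B\cong \mathbb{Z} * C_{m/2}$ for $m$ even and $B\cong C_2 * C_m$ for $m$ odd. The only difference is cosmetic --- you carry out the Tietze transformations explicitly where the paper partly cites Levitt's result on Baumslag--Solitar groups, and you spell out the (standard) argument that such a free product contains $F_2$ and hence is not abelian-by-finite, which the paper leaves implicit.
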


	\begin{proof} We distinguish two cases.
\newline \underline {Case 1}. $m$ is even.
\newline In this case $A = A(m)$ is the Baumslag-Solitar group $BS(m/2, m/2)$ and, by \cite[Corollary~8.2]{levitt}, $B = A/Z(A)$ is a free product of cyclic groups. In particular, $A=\langle a,b\mid (ab)^{m/2}=(ba)^{m/2}\rangle=\langle x,y\mid y^{-1}x^{m/2}y=x^{m/2}\rangle$, with $x=ab,y=a$. The center of $A$ is the cyclic subgroup generated by the Garside element $x^{m/2}$ (see Theorem~\ref{fundam_elem} and \cite{Hum90}), which implies that $B\cong C_{m/2}*\mathbb{Z}$, the free product of a cyclic group of order $m/2$ and $\mathbb{Z}$.
\newline \underline {Case 2}. $m$ is odd.
\newline In this case $A = A(m)$ is such that $B = A/Z(A)$ is the free product of two finite cyclic groups, one of order $2$ and one of order $m$. Indeed, $A=\langle a,b\mid \underbrace{ab\dots a}_m = \underbrace{ba\dots b}_m\rangle=\langle x,y \mid x^2=y^m\rangle$
, where $x=\underbrace{ab\dots a}_{m/2}$ and $y=ab$. By Theorem~\ref{fundam_elem} and \cite{Hum90}, the center of $A$ is a cyclic group generated by the Garside element $x^2$. Therefore $B\cong\langle x,y\mid x^2, y^m\rangle$.
\end{proof}

	\begin{corollary}\label{cor_super_tech} Let $(A, S)$ be a non-abelian Artin system and let $\{a, b\}$ be an edge of the Coxeter graph with label $\geq 3$. Let $H = \langle a, b \rangle_A$, $N = Z(H)$ and let $m < \omega$ be as in \ref{intermediate_lemma}. Suppose that there is a prime $n > m$ such that $H$ is $n$-pure in $A$. Then $A$ is not superstable.
\end{corollary}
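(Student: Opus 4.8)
The plan is to obtain this corollary as an immediate combination of Lemma~\ref{dihedral_artin}, Proposition~\ref{simon_prop}, Lemma~\ref{intermediate_lemma} and Theorem~\ref{general_criterion}, checking that all the hypotheses of those results are in place for $G = A$.

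First I would describe $H$. Since $\{a,b\}$ is an edge of the Coxeter graph with label $m' \geq 3$, the standard fact that standard parabolic subgroups of Artin groups are themselves Artin groups (recalled in Section~\ref{prelim}) gives that $H = \langle a, b\rangle_A$ is the dihedral Artin group of type $I_2(m')$. By Lemma~\ref{dihedral_artin}, $B := H/N = H/Z(H)$ is a free product of finitely many cyclic groups which is not abelian-by-finite. A free product of finitely many cyclic groups is virtually free; since $B$ is not abelian-by-finite, any finite-index free subgroup of $B$ must be non-abelian, so $B$ is virtually free non-abelian. Fix such a free non-abelian subgroup $F = K/N$ of finite index $m$ in $P := H/N$, taken normal without loss of generality as permitted in Lemma~\ref{intermediate_lemma}; this $m$ is the one in the statement, and by hypothesis we are handed a prime $n > m$ such that $H$ is $n$-pure in $A$.

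Next I would verify the one remaining hypothesis of Lemma~\ref{intermediate_lemma}: the existence of $a_\ell \in H \setminus N$ (for $\ell < \omega$) such that $a_{\ell_1}^{-1} a_{\ell_2} N \notin \{x^n N y^n N : x, y \in H\}$ whenever $\ell_1 \neq \ell_2$. This I would get from Proposition~\ref{simon_prop} applied to the free product of cyclic groups $B$: it furnishes a sequence $(a'_\ell)_{\ell < \omega}$ in $B$ with $(a'_{\ell_1})^{-1} a'_{\ell_2} \notin \{x^n y^n : x, y \in B\}$ for $\ell_1 \neq \ell_2$, and by discarding at most one term we may assume each $a'_\ell \neq e_B$. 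Choosing preimages $a_\ell \in H$ of $a'_\ell$ (so that $a_\ell \notin N$) and noting that the image in $B = H/N$ of $\{x^n N y^n N : x, y \in H\}$ is exactly $\{x^n y^n : x, y \in B\}$, the required non-membership transfers verbatim.

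With all hypotheses of Lemma~\ref{intermediate_lemma} now met for $G = A$, that lemma yields that the criterion of Theorem~\ref{general_criterion} holds for $A$ with these subgroups $N \trianglelefteq H \leq A$, this prime $n \geq 2$, and $k = 1$; Theorem~\ref{general_criterion} then gives that $A$ is not superstable, as claimed. I do not expect any real obstacle here: the mathematical content sits entirely in the three lemmas, and what remains is bookkeeping. The only point that deserves a little care is the compatibility between the prime $n$ supplied by the $n$-purity hypothesis and the implicit lower bound on $n$ used in the proof of Proposition~\ref{simon_prop}; this is exactly why the companion statements, Theorem~\ref{main_th2} and Corollary~\ref{cor_super}, phrase the hypothesis as $n$-purity for a sufficiently large prime $n > m$.
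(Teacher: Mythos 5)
Your proof is correct and follows exactly the route the paper intends: the paper's own proof is literally the one line ``The result follows by \ref{intermediate_lemma}'', and you have simply filled in the (correct) verification that the hypotheses of that lemma are met via Lemma~\ref{dihedral_artin} and Proposition~\ref{simon_prop}. You also rightly flag the only delicate point, namely that Proposition~\ref{simon_prop} really only produces the sequence for $n$ sufficiently large, which is why the surrounding statements speak of ``large enough'' primes.
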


	\begin{proof}\label{cor_super} The result follows by \ref{intermediate_lemma}.
\end{proof}

	\begin{proof}[Proof of \ref{main_th2}] This is now immediate from  Corollary~\ref{cor_super_tech}.
	\end{proof}

\section{Elementary equivalence in spherical Artin groups}\label{sec_elem_eq_spher}

In section we prove that two spherical Artin groups are elementary equivalent if and only if they are isomorphic.

	\begin{lemma}\label{easy_lemma} Let $G$ and $H$ be groups. If $G$ and $H$ are elementary equivalent, then so are $G/Z(G)$ and $H/Z(H)$.
\end{lemma}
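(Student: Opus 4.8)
The plan is to show that the center of a group is a $0$-definable subgroup (uniformly across all groups), so that the quotient by the center is an interpretable structure, and then invoke the standard fact that elementary equivalence is preserved under interpretation. First I would note that for any group $G$, the center $Z(G) = \{g \in G : \forall h\, (gh = hg)\}$ is defined by the first-order formula $\zeta(x) := \forall y\, (xy = yx)$, which does not depend on $G$. Hence the quotient $G/Z(G)$ can be interpreted in $G$ in a way that is uniform in $G$: the underlying set is $G$ itself (no need to pass to a definable subset since $\zeta$ defines all of $G$ as the ambient carrier), the equivalence relation $x \sim y :\Leftrightarrow xy^{-1} \in Z(G)$ is $0$-definable via $\zeta$, and the group operation descends. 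Call this interpretation scheme $\Phi$; the same $\Phi$ applied to $G$ yields $G/Z(G)$ and applied to $H$ yields $H/Z(H)$.

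Next I would recall the general model-theoretic principle: if $\Phi$ is a (uniform, parameter-free) interpretation and $G \equiv H$, then $\Phi(G) \equiv \Phi(H)$. Concretely, every first-order sentence $\psi$ in the language of groups has a ``translation'' $\psi^\Phi$, again a sentence in the language of groups, such that $\Phi(G) \models \psi$ iff $G \models \psi^\Phi$, for every $G$. Then $\Phi(G) \models \psi$ iff $G \models \psi^\Phi$ iff $H \models \psi^\Phi$ (using $G \equiv H$) iff $\Phi(H) \models \psi$. Since $\psi$ was arbitrary, $\Phi(G) \equiv \Phi(H)$, i.e.\ $G/Z(G) \equiv H/Z(H)$.

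I do not expect any serious obstacle here; this is essentially a bookkeeping lemma. The one point requiring a little care is the handling of the quotient: one should make sure the interpretation is set up with the definable equivalence relation $\sim$ rather than by choosing coset representatives, and confirm that $\sim$ is $0$-definable (it is, since $xy^{-1}$ is a term and membership in $Z(G)$ is given by $\zeta$). Everything else — that multiplication and inversion respect $\sim$, that the translation of sentences works — is routine and uniform in $G$. One could alternatively give a direct, translation-free argument: spell out how to rewrite a sentence about $G/Z(G)$ as a sentence about $G$ by replacing each quantifier $\exists \bar{u}\,(\dots)$ over the quotient with $\exists \bar{u}\,(\dots)$ over $G$ and each equation $s = t$ with $s t^{-1} \in Z(G)$, i.e.\ $\zeta(st^{-1})$; but invoking the general interpretation lemma is cleaner and is the route I would take.
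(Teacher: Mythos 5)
Your argument is correct and is essentially the paper's proof: the paper also observes that $G/Z(G)$ is (uniformly, parameter-free) interpretable in $G$ — phrased there as being a sort of $G^{\mathrm{eq}}$ — and concludes via preservation of elementary equivalence under interpretation. Your version just spells out the definability of the center and of the coset equivalence relation, which the paper leaves implicit.
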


	\begin{proof} This is folklore and it follows from the fact that $G/Z(G)$ is interpretable in $G$ and so it is a sort in $G^{\mrm{eq}}$, combined with the fact that if $G$ and $H$ are elementary equivalent, so $G^{\mrm{eq}}$ and $H^{\mrm{eq}}$ are also elementary equivalent.
	\end{proof}
	
	The following remark will play a crucial role in the proof of Theorem~\ref{spherical_irr_th}.
	
	\begin{remark}\label{useful_remark} Suppose that $G=\langle s_1,\dots, s_n \mid r_1,\dots,r_m\rangle$ is a finitely presented group with a finite number of conjugacy classes of finite subgroups. Let $F_1,\dots,F_m$ be a system of representatives of the conjugacy classes of finite subgroups of $G$, and, for any $1\leq i \leq m$, let $w_{i,1}(s_1,\dots,s_n),\dots,w_{i, k_i}(s_1,\dots,s_n)$ be words in the generators representing the nontrivial elements of $F_i$, where $k_i = \lvert F_i \rvert -1$. Consider now the following formula $\theta(y)$, where $y$ is an $n$-tuple of variables:
\[ \bigwedge_{i=1}^k (r_i(y)=e) \wedge \forall g \bigwedge_{i=1}^m \bigwedge_{j=1}^{k_i} (g w_{i,j}(y)g^{-1} \ne e) \bigwedge_{\substack{i,j=1 \\ i\ne j \\ k_i = k_j}}^m \bigvee_{q=1}^{k_i}\bigwedge_{l=1}^{k_i} (gw_{i,q}(y)g^{-1}\ne w_{j,l}(y)).
\]
Then for any group $G_1$ and $n$-tuple $(\gamma_1,\dots,\gamma_n)\in G_1^n$, $G_1\models\theta(\gamma_1,\dots,\gamma_n)$ if, and only if, the function $si\mapsto \gamma_i$ extends to a group homomorphism $\phi\colon G\to G_1$ that is injective on the finite subgroups of $G$ and sends pairs of non-conjugate finite subgroups of $G$ to pairs of non-conjugate finite subgroups of $G_1$.
\end{remark}

	\begin{theorem}\label{spherical_irr_th} If two irreducible spherical Artin groups are elementary equivalent, then they are isomorphic.
\end{theorem}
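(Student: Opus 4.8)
The plan is to pass to the central quotients, where the groups become acylindrically hyperbolic, and then to separate the countably many connected Coxeter graphs of Table~\ref{irredCoxdiagr} by elementary-equivalence invariants of those quotients. Write $A = A(\Gamma_A)$, $B = A(\Gamma_B)$ with $\Gamma_A, \Gamma_B$ connected graphs from Table~\ref{irredCoxdiagr}; since $\Gamma \mapsto A(\Gamma)$ is functorial, it suffices to prove $\Gamma_A \cong \Gamma_B$.

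First, since spherical Artin groups are torsion-free, Theorem~\ref{censphAr} gives $Z(A), Z(B) \cong \mathbb Z$, and (as in the proof of Corollary~\ref{spherical_domain}, via \cite{npc_art_fin}) the quotients $\bar A := A/Z(A)$ and $\bar B := B/Z(B)$ are acylindrically hyperbolic, have no non-trivial finite normal subgroup, and have only finitely many conjugacy classes of finite subgroups, as needed to apply Remark~\ref{useful_remark}. By Lemma~\ref{easy_lemma}, $A \equiv B$ implies $\bar A \equiv \bar B$. I would then extract three invariants of $\mrm{Th}(\bar A)$: (i) the \emph{torsion spectrum} $\mathcal O(\bar A)$, the set of orders of torsion elements, which is an $\equiv$-invariant because ``there exists an element of order exactly $d$'' is first-order, and which is tabulated in Table~\ref{ord_tors_elem}; (ii) \emph{hyperbolicity}, which is an $\equiv$-invariant among finitely generated groups by André's results in \cite{andre_hyper}, and which for $\bar A$ holds precisely when $\Gamma_A$ has at most two vertices --- in rank $1$ it is trivial, in rank $2$ it is virtually free by Lemma~\ref{dihedral_artin}, and in rank $\geq 3$ two non-adjacent generators (available since $\Gamma_A$ is a tree on $\geq 3$ vertices) span a $\mathbb Z^2$ that injects into $\bar A$, because the proper parabolic $\langle s,t\rangle_A$ meets $\langle\Delta\rangle \supseteq Z(A)$ trivially by Proposition~\ref{delparsgb}; (iii) the \emph{finite-subgroup profile}: applying Remark~\ref{useful_remark} to $\bar A$ produces a sentence $\exists y\,\theta(y)$ which, since it also holds in $\bar B$, yields a homomorphism $\bar A \to \bar B$ that is injective on finite subgroups and respects non-conjugacy of finite subgroups, and symmetrically, so $\bar A$ and $\bar B$ have the same number of conjugacy classes of finite subgroups with matching orders.

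Then I would run the case analysis. Reading $\mathcal O(\bar A)$ off Table~\ref{ord_tors_elem} (using Theorem~\ref{censphAr} to know whether the relevant power is $\Delta$ or $\Delta^2$), invariant (i) already forces $\Gamma_A \cong \Gamma_B$ for every pair of graphs in Table~\ref{irredCoxdiagr} except the pairs $\{A_2, D_4\}$, $\{D_6, H_3\}$ and $\{B_n, I_2(2n)\}_{n\geq 3}$. For $\{A_2, D_4\}$ and $\{B_n, I_2(2n)\}$ the dihedral member has hyperbolic central quotient while the other does not, so invariant (ii) finishes these. The pair $\{D_6, H_3\}$ --- where both central quotients are non-hyperbolic, the torsion spectra coincide ($\{1,3,5\}$), and even the abelianizations agree ($\mathbb Z$) --- is the genuinely delicate one, to be settled by invariant (iii): one computes that $\overline{A(D_6)}$ and $\overline{A(H_3)}$ have different numbers of conjugacy classes of finite (necessarily cyclic, of order $3$ or $5$) subgroups. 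Finally $\Gamma_A \cong \Gamma_B$ gives $A \cong B$, consistently with the isomorphism classification of \cite{Par04}.

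The main obstacle is twofold. Conceptually, one must genuinely invoke André's hyperbolic machinery: since the central quotients of the higher-rank spherical Artin groups are acylindrically hyperbolic but not hyperbolic (they contain $\mathbb Z^2$), no rigidity theorem for hyperbolic groups applies to them off the shelf, and it is only the \emph{invariance of hyperbolicity} under $\equiv$ --- together with the availability of the finite-subgroup formula $\theta$, which requires the finiteness of the set of conjugacy classes of finite subgroups --- that does the separating work in those cases. Concretely, the labor is in the bookkeeping against Tables~\ref{irredCoxdiagr} and~\ref{ord_tors_elem} to reduce to the short list of coincident torsion spectra, and in computing the finite-subgroup profiles of the central quotients precisely enough to resolve the one remaining pair, $D_6$ versus $H_3$ --- the only place where neither the torsion spectrum nor hyperbolicity suffices.
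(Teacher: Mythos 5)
Your proposal follows the paper's architecture very closely up to the final case: pass to central quotients via Lemma~\ref{easy_lemma}, use the torsion spectrum (Table~\ref{ord_tors_elem}) to separate all pairs except $\{A_2,D_4\}$, $\{B_n,I_2(2n)\}_{n\geq3}$, $\{D_6,H_3\}$, and then invoke the elementary invariance of hyperbolicity \cite{andre_hyper} together with the $\mathbb Z^2$-versus-virtually-free dichotomy to dispatch the first two families. All of this matches the paper.

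The gap is in your treatment of $\{D_6,H_3\}$. First, there is a factual error: you assert that ``the abelianizations agree ($\mathbb Z$),'' but you are computing the abelianizations of $A(D_6)$ and $A(H_3)$ rather than of the central quotients $\bar A$ and $\bar B$, which is what the problem reduces to. The paper computes $\bar A^{\mathrm{ab}}\cong\mathbb Z/30\mathbb Z$ and $\bar B^{\mathrm{ab}}\cong\mathbb Z/15\mathbb Z$ (by killing $\Delta=(t_1\cdots t_6)^5$, respectively $(s_1s_2s_3)^5$); these do \emph{not} agree, and this difference is precisely the engine of the paper's argument. Second, your substitute invariant --- the number of conjugacy classes of finite subgroups of $\bar A$ versus $\bar B$ --- is left as a bare assertion (``one computes that \dots''). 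You give no computation, and the paper's route strongly suggests this count does \emph{not} separate them: if it did, the sentence $\exists y\,\theta(y)$ from Remark~\ref{useful_remark}, applied in both directions, would immediately finish the case, and the authors would not have needed the additional machinery they actually deploy. What the paper does instead is compose the two homomorphisms $\phi\colon\bar A\to\bar B$, $\phi'\colon\bar B\to\bar A$ produced by Remark~\ref{useful_remark}, iterate to get $\chi=(\phi'\circ\phi)^N$ that fixes each conjugacy class of finite subgroups, and then invoke the Castel--Paris classification of endomorphisms of $A(D_n)$ modulo center \cite{castel} to force $\chi$ to be an automorphism (the cyclic-image alternative being killed by injectivity on finite subgroups). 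This yields that $\bar A$ is a retract of $\bar B$, and the abelianization obstruction $\mathbb Z/30\mathbb Z\not\hookrightarrow\mathbb Z/15\mathbb Z$ finishes. So your sketch is sound in outline but diverges at the decisive last step, where it both misreads the abelianization data and hand-waves an unverified (and likely unusable) invariant in place of the Castel--Paris rigidity argument the paper relies on.
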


\begin{proof} Let $A$ and $B$ be as in the statement of the theorem. Clearly if $A$ is elementary equivalent to $B$, then, by Lemma \ref{easy_lemma}, $A/Z(A)$ is elementary equivalent to $B/Z(B)$.  Nearly all the couples $A,B$ can be distinguished by comparing the orders of the torsion elements modulo the centers. The strategy is to show that there are torsion elements in either $A':=A/Z(A)$ or $B':=B/Z(B)$ whose order is not admissible in the other group, using Table \ref{ord_tors_elem}. For the sake of clarity, we show now a few examples.
\begin{enumerate}[(1)]
\item[$\bullet$] Let $A=A(A_n)$ and $B=A(B_k)$ for some $n, k\geq 2$. If $k < n$, in $A'$ there is an element of order $n$; however, an element of order $n$ cannot exist in $B'$ as all the torsion elements in $B'$ have order $\leq k$. If $k = n$, then $A'$ admits an element of order $n+1$ and $B'$ does not.
If $k>n+1$, then $B'$ admits an element of order $k$ and $A'$ does not. Lastly, if $k=n+1$, we can find in $A'$ an element of order $n$, which is not possible in $B'$ as $n\nmid n+1$.
\item[$\bullet$] Let $A$ be as above and $B=A(D_k)$ for $k$ even and $>4$.  We recall that $A'$ has torsion elements of orders all the divisors of $n$ and $n+1$, while in $B'$ torsion elements have orders all the divisors of $k-1$ and $k/2$. As $k>4$, $\lvert (k-1)-k/2\rvert = k/2 -1 >1$. For $n<k-2$, both $n$ and $n+1$ are $<k-1$, therefore $B'$ admits an element of order $k-1$ and $A'$ does not. If $n=k-2$, then $n>k/2$ and $n$ is not a divisor ok $k-1$, which implies that $A'$ has an element of order $n$ and $B'$ does not. Lastly, for $n>k-2$ we can take in $A'$ an element of order $n+1$.
\item[$\bullet$] Again, let $A$ be as above and $B=A(H_4)$. If either $n=2$ or $n=5$, the order that distinguishes $A'$ and $B'$ can be taken to be 15. If $n$ is neither 2 nor 5, then $\{n,n+1\}\not\subseteq\{2,3,5,6,10,15\}=:X$. We just need to consider the one, between $n$ and $n+1$, that does not belong to $X$.
\item[$\bullet$] Let $A=A(B_n)$ and $B=A(F_4)$. A possible choice for the orders of torsion elements that allow to distinguish between $A'$ and $B'$ is the following: for $n<6$, take 6; for $n=6$, take 4; for $n>6$, take $n$.
\end{enumerate}
In a similar fashion, by comparing the orders of the torsion elements in $A/Z(A)$ and $B/Z(B)$ we are able to distinguish all pairs of irreducible spherical Artin groups except the following cases:
\begin{enumerate}[(i)]
\item\label{eleq1} $A_2$ and $D_4$;
\item\label{eleq2} $B_n$ and $I_2(2n)$, for $n \geq 3$;
\item\label{eleq3} $D_6$ and $H_3$.
\end{enumerate}
Let $A$ be either $A(A_2)$ or $A(I_2(2n))$ for $n \geq 3$ and $B$ be either $A(D_4)$ or $A(B_n)$ respectively, as in cases (\ref{eleq1}) and (\ref{eleq2}). Then $A/Z(A)$ is hyperbolic, as by Lemma~\ref{dihedral_artin} it is a non virtually abelian free product of cyclic groups (analogous calculations hold for $A(A_2)$).
On the other hand, $B/Z(B)$ is not hyperbolic, as we are going to show that it embeds $\mathbb{Z}^2$. By \cite[Theorem 1.2]{andre_hyper},
if two finitely generated groups are elementary equivalent, either they are both hyperbolic or none of them is. Therefore, $A/Z(A)$ and $B/Z(B)$ are not elementary equivalent in cases (\ref{eleq1}) and (\ref{eleq2}).
We only need to show that $B/Z(B)$ embeds $\mathbb{Z}^2$, and we will only do it for $B=A(D_4)$ as the same argument holds for $A(B_n)$. The Coxeter graph of type $D_4$ is shown in Figure~\ref{d4graph}.
\begin{figure}
	\label{d4graph}
	\begin{tikzpicture}
		\draw[thick] (0,0) -- (1,1) -- (2.5,1)
								(0,2) -- (1,1);
								
		\node[circle, draw, fill, text width=1mm, inner sep=0.5] at (0,0) {};
		\node[circle, draw, fill, text width=1mm, inner sep=0.5] at (0,2) {};
		\node[circle, draw, fill, text width=1mm, inner sep=0.5] at (1,1) {};
		\node[circle, draw, fill, text width=1mm, inner sep=0.5] at (2.5,1) {};
		
		\node[below left, font=\small] at (0,0) {$s_2$};
		\node[below left, font=\small] at (0,2) {$s_1$};
		\node[below right, font=\small] at (1,1) {$s_3$};
		\node[below right, font=\small] at (2.5,1) {$s_4$};
	\end{tikzpicture}
	\caption{The Coxeter graph of type $D_4$}
\end{figure}
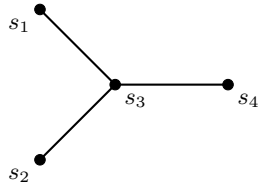
Let $a,b$ be two generators of $A(D_4)$ corresponding to two vertices of degree 1 in $D_4$; for instance, let $a=s_1$ and $b=s_2$. By \cite{vdL83}, as we recalled in Section~\ref{prelim}, the standard parabolic subgroup $H$ generated by $a$ and $b$ is isomorphic to $\mathbb{Z}^2$. The image of $H$ in the quotient $B/Z(B)$ is the subgroup $HZ(B)/Z(B)\cong H/H\cap Z(B)$ (the last isomorphism being an application of the Second Isomorphism Theorem for groups).
Suppose that $H\cap Z(B)\ne \{e\}$. Then $\Delta^k\in H$ for some $k\ne 0$, where $\Delta$ is the standard Garside element of $B$, as $Z(B)$ is generated by a power of  $\Delta$. This implies, by Proposition~\ref{rootparsgb}, that $\Delta\in H$, which is impossible (see Proposition~\ref{delparsgb}). Therefore, the image of $H$ in $B/Z(B)$ is isomorphic to $H$. This ends the proof of case (\ref{eleq1}) and (\ref{eleq2}).

\smallskip \noindent We are then left to deal with case (\ref{eleq3}). Let $A:=A(D_6)/Z(A(D_6))$ and $B:=A(H_3)/Z(A(H_3))$. The Coxeter graphs of type $D_6$ and $H_3$ are shown in Figure~\ref{d6h3graphs}.

\begin{figure}
	\begin{tikzpicture}
		\node () at (-1,1) {$D_6:$};
		\draw[thick] (0,0) -- (1,1) -- (4,1)
		(0,2) -- (1,1);
		\node[circle, draw, fill, text width=1mm, inner sep=0.5] at (0,0) {};
		\node[circle, draw, fill, text width=1mm, inner sep=0.5] at (0,2) {};
		\node[circle, draw, fill, text width=1mm, inner sep=0.5] at (1,1) {};
		\node[circle, draw, fill, text width=1mm, inner sep=0.5] at (2,1) {};
		\node[circle, draw, fill, text width=1mm, inner sep=0.5] at (3,1) {};
		\node[circle, draw, fill, text width=1mm, inner sep=0.5] at (4,1) {};
		
		\node[below, font=\small] at (0,2) {$t_1$};
		\node[below, font=\small] at (0,0) {$t_2$};
		\node[below, font=\small] at (1,1) {\,$t_3$};
		\node[below, font=\small] at (2,1) {$t_4$};
		\node[below, font=\small] at (3,1) {$t_5$};
		\node[below, font=\small] at (4,1) {$t_6$};

		\node () at (6,1) {$H_3:$};
		\draw[thick] (7,1) -- (9,1);
		
		\node[circle, draw, fill, text width=1mm, inner sep=0.5] at (7,1) {};
		\node[circle, draw, fill, text width=1mm, inner sep=0.5] at (8,1) {};
		\node[circle, draw, fill, text width=1mm, inner sep=0.5] at (9,1) {};
		
		\node[below, font=\small] at (7,1) {$s_1$};
		\node[below, font=\small] at (8,1) {$s_2$};
		\node[below, font=\small] at (9,1) {$s_3$};
		\node[above, font=\small] at (7.5,1) {5};
	\end{tikzpicture}
	\caption{The Coxeter graphs of type $D_6$ and $H_3$}
	\label{d6h3graphs}
\end{figure}
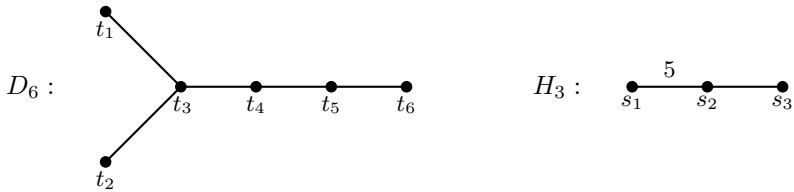

\smallskip \noindent By \cite{Sor21}, every nontrivial torsion element in $A$ or $B$ has order either $3$ or $5$, and by \cite{Par04} the maximal order of a finite subgroup of $A$ or $B$ is $5$. As a consequence, the only admissible nontrivial finite subgroups of $A$ or $B$ are cyclic groups of order $3$ or $5$.
Moreover, by \cite{Sor21}, every torsion element of $A$ is conjugate to a power of one of the so-called basic torsion elements listed in column 3 of \cite[Table 1]{Sor21}, and the same holds for $B$. We can therefore deduce that both $A$ and $B$ have a finite number of conjugacy classes of finite subgroups.

\smallskip \noindent Suppose now that $A\equiv B$. Then, using \ref{useful_remark}, we can find a group homomorphism $\phi\colon A\to B$ that satisfies the properties from \ref{useful_remark}. Analogously, there is a group homomorphism $\phi'\colon B\to A$ that satisfies the properties from \ref{useful_remark}. By composing them, we find an endomorphism $\psi:=\phi'\circ\phi\colon A\to A$ which is injective on finite subgroups and sends pairs of non-conjugate finite subgroups to pairs of non-conjugate finite subgroups. As the number of conjugacy classes of finite subgroups of $A$ is finite, there exists an $N>0$ such that $\chi:=\psi^N$ sends every finite subgroup of $A$ to a conjugate one. Moreover, $\chi$ is injective on the finite subgroups of $A$.

\smallskip \noindent By \cite[Theorem~2.8]{castel}, either $\chi$ is cyclic (i.e., its  image is a cyclic subgroup of $A$), or it is an automorphism of $A$. Suppose it is cyclic.
Suppose that its image $D$ is an infinite cyclic group; then we reach a contradiction as $D$ is torsion-free and therefore this would contradict the injectivity of $\chi$ on finite subgroups of $A$.
Easily, $D$ cannot be a finite cyclic group either.
Therefore $\chi$ cannot be a cyclic endomorphism, which implies that $\chi$ and $\psi$ are automorphisms. It follows that $\phi$ is injective and $\phi'$ is surjective.

\smallskip \noindent By Theorem \ref{censphAr} and Remark \ref{fundam_elem}, $A$ is the quotient of $A(D_6)$ by the normal closure of the Garside element $\Delta_A:=(t_1\dots t_6)^5$, hence a presentation for $A$ is given by the presentation of $A(D_6)$ with the extra relation $\Delta_A=e$. The presentation of the abelianization is therefore obtained by imposing commutativity among the generators, which implies that in the abelianization $t_1=\dots=t_6=:t$ and $t^{30}=e$, i.e., the abelianization of $A$ is isomorphic to $\mathbb{Z}/30\mathbb{Z}$ . Similarly, with $\Delta_B:=(s_1s_2s_3)^5$, the abelianization of $B$ is isomorphic to $\mathbb{Z}/15\mathbb{Z}$. This implies that $\mathbb{Z}/30\mathbb{Z}$ embeds into $\mathbb{Z}/15\mathbb{Z}$, which is impossible. Therefore $A\not\equiv B$, which concludes the proof.
\end{proof}

\begin{fact}[{\cite{alg_geom_over_groups3}}]\label{fact_unique_dec} Let $G = G_1 \times \cdots \times G_n$ be a finite direct product of domains $G_1$, \dots, $G_n$. Then $G$ has
a unique (up to a permutation of factors) finite direct product decomposition into indecomposable
groups.
\end{fact}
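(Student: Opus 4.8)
The plan is to prove uniqueness in three moves.

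\emph{First}, I would record that every domain is directly indecomposable (equivalently, has trivial center): if a domain $D$ split as $C \times D'$ with both factors nontrivial, then for $c \in C \setminus \{e\}$ and $d \in D' \setminus \{e\}$ one has $d^{g} \in D'$ for every $g \in D$ (as $D'$ is normal in $D$), hence $[c, d^{g}] = e$ for all $g$, contradicting the domain property. Thus $G = G_{1} \times \cdots \times G_{n}$ is \emph{already} a finite decomposition of $G$ into directly indecomposable groups, so only uniqueness remains. I may assume all the $G_{i}$ are nontrivial, since trivial factors occur in no decomposition into nontrivial indecomposables and can be discarded at the outset.

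\emph{Second}, the key structural lemma: every direct factor of $G$ equals $\prod_{i \in I} G_{i}$ for some $I \subseteq \{1, \dots, n\}$. Suppose $G = H \times K$ and let $\pi_{i} \colon G \to G_{i}$ be the coordinate projections. Since $H$ and $K$ are normal in $G$ and commute elementwise, $\pi_{i}(H)$ and $\pi_{i}(K)$ are normal subgroups of $G_{i}$ that commute elementwise. If both were nontrivial, then picking $x \in \pi_{i}(H)\setminus\{e\}$ and $y \in \pi_{i}(K)\setminus\{e\}$ we would have $y^{g} \in \pi_{i}(K)$ for every $g \in G_{i}$, whence $[x, y^{g}] = e$ for all $g \in G_{i}$ — contradicting that $G_{i}$ is a domain. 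As $\pi_{i}(H)\pi_{i}(K) = \pi_{i}(G) = G_{i} \neq \{e\}$, not both are trivial either, so exactly one of $\pi_{i}(H), \pi_{i}(K)$ is trivial. Setting $I = \{i : \pi_{i}(K) = \{e\}\}$ gives $K \subseteq \prod_{i \notin I} G_{i}$ and $H \subseteq \prod_{i \in I} G_{i}$; since $G = (\prod_{i \in I} G_{i}) \times (\prod_{i \notin I} G_{i})$, $G = HK$, and the two sub-products meet trivially, a short argument (writing $p \in \prod_{i\in I}G_i$ as $hk$ and forcing $k=e$) upgrades both inclusions to equalities.

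\emph{Third}, the uniqueness. Let $G = H_{1} \times \cdots \times H_{m}$ with each $H_{j}$ indecomposable. Each $H_{j}$ is a direct factor of $G$ (complement $\prod_{\ell \neq j} H_{\ell}$), so by the lemma $H_{j} = \prod_{i \in I_{j}} G_{i}$ for some $I_{j}$. Disjointness $I_{j} \cap I_{j'} = \emptyset$ for $j \neq j'$ follows from $H_{j} \cap H_{j'} = \{e\}$, and $\bigcup_{j} I_{j} = \{1, \dots, n\}$ follows from $\prod_{j} H_{j} = G$, so the $I_{j}$ partition $\{1, \dots, n\}$. Since $H_{j} = \prod_{i \in I_{j}} G_{i}$ is indecomposable, necessarily $|I_{j}| = 1$, say $I_{j} = \{\sigma(j)\}$; then $\sigma$ is a bijection $\{1,\dots,m\}\to\{1,\dots,n\}$ and $H_{j} = G_{\sigma(j)}$, which is the asserted uniqueness (in fact up to equality of subgroups, not merely isomorphism). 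I expect the second step to carry all the weight: the point is that the defining condition of a domain is exactly what forbids "mixed"/"diagonal" direct factors, and once that lemma is in hand the rest is bookkeeping; the only delicate points are discarding possibly trivial factors $G_{i}$ and checking that the index sets $I_{j}$ genuinely partition $\{1,\dots,n\}$.
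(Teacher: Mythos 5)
Your argument is correct and complete. Note that the paper itself offers no proof of this statement: it is imported as a Fact from \cite{alg_geom_over_groups3}, so there is nothing internal to compare against; your write-up is a self-contained substitute. The heart of the matter is exactly your second step --- that in a finite product of domains every direct factor is a sub-product $\prod_{i\in I}G_i$, because the domain condition on each $G_i$ forbids two nontrivial, elementwise-commuting normal subgroups $\pi_i(H)$ and $\pi_i(K)$ from coexisting --- and your handling of the two delicate points (discarding trivial factors, and upgrading the inclusions $H\subseteq\prod_{i\in I}G_i$, $K\subseteq\prod_{i\notin I}G_i$ to equalities) is sound. You in fact obtain a slightly stronger conclusion than stated, namely that the indecomposable factors are unique as subgroups of $G$, not merely up to isomorphism.
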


\begin{fact}[{\cite{alg_geom_over_groups3}}]\label{domain_fact}
Let $G$ be a finite direct product of domains $G_1, \ldots, G_n$. If $G \equiv H$, then $H$ is also a finite direct product of domains.
In addition, if
$G = G_1 \times \ldots \times G_k$ and $H = H_1 \times \ldots \times H_m$
are their decompositions, we have $k = m$ and $G_i \equiv H_i$ (after a suitable reordering of the factors).
\end{fact}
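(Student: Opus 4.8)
The statement is essentially the content of \cite{alg_geom_over_groups3}; here I sketch how I would argue it. The guiding idea is that, in a finite direct product of domains, the direct factors are recovered by a \emph{uniform, parameter-controlled} first-order scheme. Introduce the parameter-free definable relation $x \perp y$ given by $\forall g\,[x, y^g] = e$. If $G = G_1 \times \cdots \times G_n$ with each $G_i$ a domain, then for $x = (x_i)_i$, $y = (y_i)_i$ one has $x \perp y$ if and only if $x$ and $y$ have disjoint supports: the forward implication is the domain property applied inside a factor indexed by a common support element, and the reverse is clear. Hence $P(x) := \{y : x \perp y\}$ equals $\prod_{i \notin \mathrm{supp}(x)} G_i$, and, since each $G_i$ is centreless, $C_G(P(x)) = \prod_{i \in \mathrm{supp}(x)} G_i$. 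So for any $c$ of singleton support $\{i\}$, the definable subgroup $D(c) := C_G(P(c))$ is exactly $G_i$; and ``$c$ has singleton support'' is first-order (it asserts $c \neq e$ together with maximality of $P(c)$ among the proper sets of the form $P(y)$, i.e.\ minimality of the support).

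First I would note that ``being a domain'' is the $\forall\exists$-sentence of the definition, so it is preserved under $\equiv$; relativising it to the definable subgroup $D(c)$ gives a first-order condition ``$D(c)$ is a domain''. I would then form the sentence $\sigma_n$ saying: there exist $c_1, \ldots, c_n$ such that the subgroups $D_j := D(c_j)$ pairwise commute, have pairwise trivial intersection with the product of the others, generate the group, and are each domains. In $G$ this is witnessed by taking $c_j$ of support $\{j\}$, so $G \models \sigma_n$, whence $H \models \sigma_n$; therefore $H$ is itself a finite direct product of $n$ domains. As domains are indecomposable and centreless, Fact~\ref{fact_unique_dec} then shows this is \emph{the} decomposition of $H$ into indecomposables, so $m = n$ and any two domain-decompositions of $H$ agree up to order of the factors.

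To match the factors up to $\equiv$, the crucial point is that ``at least $k$ of the direct factors satisfy $\tau$'' is itself first-order: take the sentence $\lambda_{\tau, k}$ asserting the existence of $c_1, \ldots, c_k$ of singleton support, pairwise $\perp$ (hence of pairwise distinct supports, so picking out distinct factors counted with multiplicity), with $\tau^{D(c_\ell)}$ holding for each $\ell$. In any model that is a finite product of domains --- in particular $G$ and $H$, by the previous step and uniqueness of decomposition --- $\lambda_{\tau,k}$ holds exactly when at least $k$ of the indecomposable factors model $\tau$. From $G \equiv H$ it follows that, for every $\tau$, the number of $G_i$ with $G_i \models \tau$ equals the number of $H_j$ with $H_j \models \tau$. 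Only finitely many complete theories occur among the $G_i$ and $H_j$, and any two of them are separated by one sentence; taking conjunctions of separating sentences produces, for each occurring theory $T$, a sentence $\tau_T$ that holds in a factor iff that factor has theory $T$. Applying the counting equality to $\tau_T$ shows $T$ occurs with the same multiplicity among the $G_i$ as among the $H_j$. Hence the multisets $\{\mathrm{Th}(G_i)\}_i$ and $\{\mathrm{Th}(H_j)\}_j$ coincide, giving the desired reordering with $G_i \equiv H_i$.

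The hard part is the first paragraph: producing the uniform first-order recovery of the factors. Everything rests on the equivalence ``$x \perp y \iff$ disjoint supports'', and it is exactly here that the domain hypothesis enters (and is genuinely needed: for arbitrary centreless indecomposable factors $P(x)$ need not describe the complementary subproduct). Once the factors are definable in this controlled way, the sentences $\sigma_n$, $\lambda_{\tau,k}$ and the ensuing counting argument are routine.
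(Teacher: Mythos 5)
The paper does not prove this statement; it is stated as a Fact with a citation to \cite{alg_geom_over_groups3} (Kvaschuk--Myasnikov--Remeslennikov), so there is no in-paper argument to compare against. Your proof is correct and is in the spirit of the cited source: the orthogonality relation $x\perp y :\Leftrightarrow \forall g\,[x,y^g]=e$, the identification $P(x)=\prod_{i\notin\operatorname{supp}(x)}G_i$ in a product of domains (which is exactly where the domain hypothesis and centrelessness are used), the definable recovery $D(c)=C_G(P(c))$ of a single factor from a support-singleton parameter $c$, and the resulting sentences $\sigma_n$ and $\lambda_{\tau,k}$ together with the finite-separation/counting step are all sound; $P(c)$ is a normal subgroup (and hence $D(c)$ too) purely from the form of its definition, so the internal direct product conditions in $\sigma_n$ make sense in an arbitrary model.

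Two small bookkeeping points you should make explicit. First, one must assume (or arrange) that the $G_i$ are nontrivial; otherwise ``decomposition length'' is meaningless, and the trivial group is vacuously a domain, so $\sigma_n$ as written could be witnessed by some $c_j=e$ with $D(c_j)=\{e\}$. Adding the conjunct ``$D(c_j)\neq\{e\}$'' fixes this, and in $G$ it forces the witnesses to have singleton support (a product of two or more nontrivial domains is never a domain, by the obvious orthogonal pair). Second, the phrase ``pairwise trivial intersection with the product of the others'' should be spelled out as $D_j\cap\prod_{i\neq j}D_i=\{e\}$ for each $j$, which together with pairwise commutation and joint generation yields the internal direct product decomposition; merely $D_i\cap D_j=\{e\}$ for $i\neq j$ would not suffice. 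With these adjustments the argument goes through.
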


We recall here that, as we saw in Fact \ref{center_dec_fact}, the center $Z(A)$ of a spherical Artin group $A$ is given by the direct product $Z(A_1)\times\cdots\times Z(A_n)$ of the centers of its irreducible components $A_1,\dots,A_n$.

	\begin{fact}[{\cite{CalvezWiest17}}]\label{acy_spher} Let $A$ be an irreducible spherical Artin group. Then $A/Z(A)$ is acylindrically hyperbolic.
\end{fact}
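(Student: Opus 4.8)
The plan is to follow the argument of Calvez and Wiest \cite{CalvezWiest17}, which runs through the Garside structure of $A$. I assume throughout that $A$ is non-abelian (equivalently, not of type $A_1$): otherwise $A/Z(A)$ is trivial and there is nothing to prove, so the statement is really intended for non-abelian $A$. I would begin by recalling the Garside-theoretic background: a spherical Artin group $A$ is a Garside group with Garside element $\Delta$, one has $Z(A) \subseteq \langle\Delta\rangle$ with $\langle\Delta\rangle/Z(A)$ finite (Theorem~\ref{censphAr}), and each $g\in A$ has a left-greedy normal form with invariants $\inf(g) \le \sup(g)$ and conjugacy-invariant ``summit'' refinements $\inf_s(g) \le \inf(g) \le \sup(g) \le \sup_s(g)$; the resulting \emph{absolute canonical length} $\ell_s(g) := \sup_s(g) - \inf_s(g)$ is a conjugacy invariant which is moreover unchanged when $g$ is multiplied by a power of $\Delta$ on either side.

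Next I would set up the \emph{additional length complex} $\mathcal{C}_{AL}(A)$: the graph whose vertices are the left cosets $g\langle\Delta\rangle$, with an edge joining two distinct vertices $g\langle\Delta\rangle$, $h\langle\Delta\rangle$ precisely when $\ell_s(g^{-1}h) \le 1$ (well defined by the $\Delta$-invariance of $\ell_s$). The group $A$ acts on $\mathcal{C}_{AL}(A)$ by left multiplication; since $Z(A)$ is central and contained in $\langle\Delta\rangle$, it acts trivially, so this action descends to an action of $A/Z(A)$. One then checks that $\mathcal{C}_{AL}(A)$ is connected and has infinite diameter, the latter because $A$ has elements of arbitrarily large absolute canonical length.

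The central step --- and the main obstacle --- is to prove that $\mathcal{C}_{AL}(A)$ is Gromov-hyperbolic. Here the plan is the combinatorial analysis of \cite{CalvezWiest17}: manipulate left-greedy normal forms to verify a ``guessing geodesics''/thin-triangles criterion for $\mathcal{C}_{AL}(A)$ (for braid groups this complex is known to be quasi-isometric to the curve complex of the punctured disk, recovering hyperbolicity from Masur--Minsky, but in general a self-contained Garside-theoretic argument is needed). Granting hyperbolicity, I would then produce a loxodromic WPD element: using the Garside trichotomy (periodic / reducible / ``pseudo-Anosov''), exhibit $g\in A$ admitting a rigid pseudo-Anosov power, so that $\ell_s(g^n)$ grows linearly; such a $g$ translates along a quasi-axis in $\mathcal{C}_{AL}(A)$, and its rigidity is precisely what allows the WPD condition to be verified for the $A$-action. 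Existence of such a $g$ for every irreducible spherical non-abelian $A$ is part of \cite{CalvezWiest17}; in the cases where $A/Z(A)$ is already virtually free --- e.g.\ the dihedral types, where $A/Z(A)$ is a non-virtually-cyclic free product of cyclic groups by Lemma~\ref{dihedral_artin} --- one can shortcut instead via the action on the Bass--Serre tree.

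Finally I would conclude: the induced action of $A/Z(A)$ on the hyperbolic graph $\mathcal{C}_{AL}(A)$ has a loxodromic WPD element, and $A/Z(A)$ is not virtually cyclic (for $A$ irreducible spherical non-abelian it contains a non-abelian free subgroup, since any non-abelian free subgroup of $A$ must meet the cyclic center trivially), so the action is non-elementary; by the characterization of acylindrically hyperbolic groups in terms of non-elementary actions on hyperbolic spaces admitting a WPD element, $A/Z(A)$ is acylindrically hyperbolic. In short, the hard part is Gromov-hyperbolicity of $\mathcal{C}_{AL}(A)$; the construction of a pseudo-Anosov element, the WPD verification, and the closing appeal to Osin's theorem are comparatively routine once the Garside machinery is available.
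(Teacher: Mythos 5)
The paper does not prove this statement: it is stated as a \emph{Fact} and attributed directly to \cite{CalvezWiest17}, with no argument supplied. Your proposal is therefore not comparable against a proof in the paper; rather, it is a reasonably faithful outline of the Calvez--Wiest argument itself (the additional length complex on cosets of $\langle\Delta\rangle$, its $\Delta$-invariant edge relation via $\ell_s$, the descent of the $A$-action to $A/Z(A)$, hyperbolicity via a guessing-geodesics criterion, a rigid ``pseudo-Anosov'' WPD element from the Garside trichotomy, and the closing appeal to Osin's characterization). As a summary of the cited reference, I see no substantive gap.

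One small point worth flagging: in the $A_1$ case you write that $A/Z(A)$ is trivial and ``there is nothing to prove.'' Strictly, the trivial group is \emph{not} acylindrically hyperbolic (acylindrically hyperbolic groups are by definition non-elementary, hence infinite), so the statement as printed is technically false for type $A_1$ and the correct reading is that the Fact is intended for non-abelian irreducible spherical $A$, exactly as you assume. This is a flaw in the Fact's phrasing in the paper rather than in your argument, but the phrase ``nothing to prove'' should really be ``the statement must be read with the tacit assumption that $A$ is non-abelian.''
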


	\begin{proof}[Proof of Theorem~\ref{main_th3}] Let $A$ and $B$ be spherical Artin groups with associated Artin systems $(A, S_A)$ and $(B, S_B)$ and suppose that $A$ is elementary equivalent to $B$.
	By \ref{easy_lemma}, $A' := A/Z(A)$ is elementary equivalent to $B' := B/Z(B)$. Let $A = A_1 \times \cdots \times A_n$ and $B = B_1 \times \cdots \times B_m$ be the associated decompositions. Then, by \ref{center_dec_fact}, we have that $A' \cong A_1/Z(A_1) \times \cdots \times A_n/Z(A_n)$ and $B' \cong B_1/Z(B_1) \times \cdots \times B_n/Z(B_m)$. Now, by \ref{acy_spher} and our Theorem~\ref{main_th1} we have that, for every $i \in [1, n]$ and $j \in [1, m]$, $A'_i := A_i/Z(A_i)$ and $B'_j := B_j/Z(B_j)$ are domains. Hence, by \ref{domain_fact}, $B' = B''_1 \times \cdots \times B''_n$ for certain domain $B''_i$, for $i \in [1, n]$ such that $A'_i$ is elementary equivalent to $B''_i$. But then, by \ref{fact_unique_dec}, we have that $n = m$ and (up to a permutation of factors) $B'_i = B''_i$. Hence, for every $i \in [1, n]$, $A'_i$ is elementary equivalent to $B'_i$, but in the proof of \ref{spherical_irr_th} we saw that this implies that \mbox{$A_i \cong B_i$, and so we are done.}
	\end{proof}

\section{Elementary equivalence in affine Artin groups of type $\tilde{A}_n$}

This section is devoted to distinguishing affine Artin groups of type $\tilde{A}_n$ with $n \geq 4$ from the other simply laced affine Artin groups.
To start, we recall the following definition, introduced by O.~Houcine in \cite{houcine2}.

\begin{definition}A group $G$ is said to be strongly co-Hopfian if there exists a finite subset $F \subset G \setminus \lbrace 1\rbrace$ such that, for any endomorphism $\phi$ of $G$, if $\ker(\phi) \cap  F = \varnothing$ then $\phi$ is an automorphism.
\end{definition}

We will need the following lemma.

\begin{lemma}
Affine Artin groups of type $\tilde{A}_n$ for $n\geq 4$ are strongly co-Hopfian.
\end{lemma}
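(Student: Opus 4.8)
The plan is to exploit the explicit description of $\mrm{End}(A)$ for $A$ of type $\tilde{A}_n$ with $n \geq 4$ from \cite{tilde_An}, together with the criterion from \cite{castel} (Theorem~2.8) that an endomorphism of such a group is either cyclic (has cyclic image) or an automorphism. Concretely, I would first recall from \cite{tilde_An} the structure of the monoid $\mrm{End}(A)$: every endomorphism factors, up to conjugation and the action of the (finite) outer automorphism group, through a short list of "canonical" maps, and in particular the non-injective endomorphisms all have image contained in some small, identifiable subgroup (e.g.\ an abelian or cyclic subgroup). The goal is to produce a finite set $F \subset A \setminus \{1\}$ that meets the kernel of every non-injective endomorphism, and also meets the kernel of every injective-but-not-surjective endomorphism — but by co-Hopficity of $A$ (affine Artin groups of type $\tilde A_n$, $n\ge 4$, are co-Hopfian, which should itself follow from the $\mrm{End}(A)$ description, or be cited from \cite{tilde_An}), every injective endomorphism is already an automorphism, so only the non-injective case needs to be killed by $F$.

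The key steps, in order, would be: (1) Recall/cite co-Hopficity so that it suffices to handle non-injective $\phi$. (2) Invoke \cite{castel}*{Theorem~2.8} (already used in the excerpt, in the proof of Theorem~\ref{spherical_irr_th}) to conclude that a non-injective endomorphism $\phi$ must be \emph{cyclic}, i.e.\ $\phi(A) \leq C$ for some cyclic subgroup $C \leq A$ — here one should check the hypotheses of \cite{castel} apply to $A$ of type $\tilde A_n$, which they do since the associated Coxeter group is irreducible affine and $A$ is acylindrically hyperbolic and torsion-free. (3) Since $\mrm{End}(A)$ modulo automorphisms is essentially finite in the relevant sense — more precisely, the cyclic endomorphisms all have image inside finitely many conjugacy classes of cyclic subgroups, and can be normalized using the description in \cite{tilde_An} — identify explicitly a single element (or finitely many), say built from a commutator of two standard generators $[s_i, s_j]$ together with a suitable "long" element, that cannot lie in the kernel-complement of any cyclic endomorphism: if $\phi$ is cyclic then $\phi$ kills all commutators $[s_i,s_j]$ that are nontrivial in $A$ (as the image is abelian), and since $A$ is non-abelian such a nontrivial commutator exists; more care is needed to simultaneously defeat endomorphisms whose image is infinite cyclic but which happen to be injective on the commutators — but those would then be injective on a non-abelian subgroup, contradicting cyclicity, so in fact a single nontrivial commutator $c = [s_1, s_2] \neq 1$ works, and we take $F = \{c\}$, or enlarge $F$ slightly to also rule out the finitely many remaining exceptional cyclic endomorphisms flagged by \cite{tilde_An}. (4) Conclude: if $\ker(\phi) \cap F = \varnothing$ then $\phi$ is not cyclic, hence by \cite{castel} it is an automorphism, which is exactly strong co-Hopficity.

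I expect the main obstacle to be step (3): making sure that a \emph{finite} set $F$ genuinely meets the kernel of \emph{every} non-automorphism endomorphism, not just the "generic" cyclic ones. The subtlety is that \cite{castel}'s dichotomy gives cyclic-or-automorphism, but "cyclic" endomorphisms can still be numerous, and one must use the finer classification in \cite{tilde_An} of $\mrm{End}(A)$ to see that, modulo precomposition with automorphisms (which preserve whether $\ker \phi \cap F = \varnothing$ only if $F$ is chosen $\Aut(A)$-invariant, or one argues orbit-by-orbit since $\Aut(A)$ has finitely many orbits on the relevant data), there are only finitely many cyclic endomorphisms to defeat, each contributing one element of $A\setminus\{1\}$ to $F$. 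A secondary technical point is verifying that the cited results of \cite{castel} and of \cite{tilde_An} do apply verbatim to the groups at hand, and reconciling the hypothesis "$n \geq 4$" (which is exactly the range where the $\mrm{End}$-description of \cite{tilde_An} is available) with the statement being proved.
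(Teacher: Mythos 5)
Your proposal correctly identifies the two structural ingredients — the description of $\mathrm{End}(A)$ from \cite{tilde_An} and the idea of killing abelian-image endomorphisms with a nontrivial commutator — but the detailed mechanism you describe has two genuine problems.

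First, you invoke \cite{castel}*{Theorem~2.8} to get a ``cyclic or automorphism'' dichotomy. That theorem concerns Artin groups of type $D$ (and is used in the paper only for the spherical $D_6$/$H_3$ case of Theorem~\ref{spherical_irr_th}); its hypotheses do \emph{not} apply to affine Artin groups of type $\tilde A_n$, despite your assertion to the contrary. The correct tool here is Theorem~2.1 of \cite{tilde_An}, which gives a trichotomy of a different shape: any endomorphism $\varphi$ of $A(\tilde A_n)$, $n\ge 4$, is either an automorphism, or has abelian image, or factors as $\psi\circ\alpha_p$ or $\psi\circ\beta_p$ for some automorphism $\psi$ and integer $p$, where $\alpha_p$, $\beta_p$ are explicit endomorphisms. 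Framing the problem in terms of ``cyclic'' endomorphisms, and trying to import the type-$D$ dichotomy, is a dead end; and the third branch of the trichotomy (the maps $\alpha_p$, $\beta_p$) is not obviously subsumed by ``abelian image,'' so the commutator alone does not kill it.

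Second, and more substantively, you claim that modulo automorphisms ``there are only finitely many cyclic endomorphisms to defeat, each contributing one element of $A\setminus\{1\}$ to $F$.'' This is the crux, and it is not true in the form you state it: the family $\{\alpha_p\}_{p\in\mathbb{Z}}$ (and likewise $\{\beta_p\}$) is infinite, and precomposition with automorphisms does not reduce it to a finite list in a way that obviously yields a finite $F$. The missing idea — and the actual content of the paper's proof — is that one can exhibit fixed elements $a,b\in A\setminus\{1\}$, independent of $p$, with $\alpha_p(a)=1$ and $\beta_p(b)=1$ for all $p$ (the paper points to $a=t_0^{-1}\nu_0$, $b=t_0^{-1}\nu_1$ in the notation of \cite{tilde_An}). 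Since precomposition by $\psi\in\Aut(A)$ does not change the kernel, $a$ and $b$ also lie in $\ker(\psi\circ\alpha_p)$, resp.\ $\ker(\psi\circ\beta_p)$. Together with a nontrivial commutator $g=[x,y]$ (which lies in the kernel of any endomorphism with abelian image), the set $F=\{g,a,b\}$ then witnesses strong co-Hopficity. Your proposal would need to replace the appeal to \cite{castel} by this uniform-kernel observation to close the gap.
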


\begin{proof}
By Theorem 2.1 in \cite{tilde_An}, for any endomorphism $\varphi$ of $G$, either $\varphi$ is an automorphism of $G$ or the image of $\varphi$ is abelian or $\varphi=\psi\circ \alpha_p$ or $\varphi=\psi\circ \beta_p$ for some automorphism $\psi$ of $G$ and some integer $p$, where $\alpha_p$ and $\beta_p$ are two endomorphisms of $G$ defined in \cite{tilde_An}. These endomorphisms $\alpha_p$ and $\beta_p$ enjoy the following property: there exist $a,b\in G\setminus \lbrace 1\rbrace$ that do not depend on $p$ such that $\alpha_p(a)=1$ and $\beta_p(b)=1$ (for instance, one can take $a=t_0^{-1}\nu_0$ and $b=t_0^{-1}\nu_1$ using the same notation as in \cite{tilde_An}). Now, let $x,y\in G$ be such that $g=[x,y]$ is nontrivial, and define $F=\lbrace g,a,b\rbrace$. Therefore any endomorphism $\varphi$ of $G$ such that $\ker(\varphi)\cap F=\varnothing$ is an automorphism.
\end{proof}

\begin{corollary}\label{affArt_retr}
Let $G$ be the Artin group of type $\tilde{A}_n$ for $n\geq 4$. Let $G'$ be a finitely generated group $G'$. If $\mathrm{Th}_{\exists}(G)=\mathrm{Th}_{\exists}(G')$ then there exists an embedding $i : G\hookrightarrow G'$ such that $i(G)$ is existentially closed in $G'$. Moreover, $G'$ retracts onto $i(G)$. 
\end{corollary}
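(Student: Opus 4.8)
The strategy is the standard one for passing from equality of existential theories to the existence of an existentially closed embedding that is moreover a retract, using the strong co-Hopf property established in the previous lemma. First I would recall the general principle (due to Houcine, and used by Andr\'e): if $G$ is a finitely generated, equationally Noetherian group and $\mathrm{Th}_\exists(G) = \mathrm{Th}_\exists(G')$ with $G'$ finitely generated, then one obtains an embedding $i\colon G \hookrightarrow G'$ whose image is existentially closed in $G'$. Concretely, pick a finite generating tuple $\bar{s}$ of $G$; the quantifier-free type of $\bar{s}$ is determined by a (possibly infinite) system of equations and inequations, which by equational Noetherianity (affine Artin groups of type $\tilde A_n$ are linear, hence equationally Noetherian) is equivalent to a finite one. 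Since $G' \models \mathrm{Th}_\exists(G)$, there is a tuple $\bar{s}'$ in $G'$ satisfying the same finite system, giving a homomorphism $i\colon G \to G'$; injectivity follows because the relevant inequations (in particular $[x,y]\neq 1$ for a suitable commutator) are preserved, and one checks along the usual lines that $i(G)$ is existentially closed in $G'$ by a back-and-forth argument on existential formulas with parameters in $\bar s'$.

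The new content is the retraction. Here I would invoke the strong co-Hopfianity of $G$: let $F = \{g, a, b\} \subset G\setminus\{1\}$ be the finite set produced in the previous lemma. The idea is to build a homomorphism $r\colon G' \to i(G)$ with $r \circ i = \mathrm{id}_{i(G)}$. Because $i(G)$ is existentially closed in $G'$ and $G'$ is finitely generated, one can reflect a finite generating tuple of $G'$ back into $i(G)$: using that each generator of $G'$ satisfies, together with $\bar s'$, an existential formula asserting "there is a tuple generating a copy of $G'$ containing $i(\bar s)$", existential closedness lets us find such a configuration inside $i(G)$, yielding a retraction-candidate homomorphism $r\colon G' \to i(G)$. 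The composite $r \circ i\colon G \cong i(G) \to i(G)$ is then an endomorphism of $G$ (transported along $i$), and the construction can be arranged so that it does not kill any element of $F$ — precisely because $F$ was chosen to detect the non-automorphism endomorphisms $\alpha_p, \beta_p$ and those with abelian image. By strong co-Hopfianity, $r\circ i$ is therefore an automorphism of $G$; composing $r$ with its inverse (again transported through $i$) gives the desired genuine retraction onto $i(G)$.

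The main obstacle, and the step requiring the most care, is arranging that $\ker(r\circ i)\cap F = \varnothing$, i.e. that the reflected homomorphism $r$ is ``large enough'' on $i(G)$. One must encode, in an existential sentence true in $G'$ (with parameters the images of the $F$-elements under $i$, which are nontrivial in $G'$), the statement that these particular elements are nontrivial and, ideally, that $\bar s'$ together with the ambient generators of $G'$ witnesses a retraction pattern; then transfer it into $i(G)$ by existential closedness. Checking that this can be done simultaneously for the finitely many elements of $F$ — keeping them all nontrivial under the composite — is where the finiteness of $F$ (the strong co-Hopf property, not mere co-Hopficity) is essential and where the bookkeeping lies. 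The rest is routine: equational Noetherianity gives the finite systems, and linearity of $\tilde A_n$-type Artin groups (or the results of \cite{tilde_An}) supplies it; the back-and-forth for existential closedness is standard.
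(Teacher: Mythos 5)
The paper proves this in one line: it cites Lemma~4.2 and Remark~4.3 of \cite{andre_exclos}, whose hypotheses (finitely presented, strongly co-Hopfian) were established in the preceding lemma, and notes that the retraction appears in the proof of André's lemma even though it is not in its statement. You instead try to reconstruct the argument from scratch, and while the headline idea (strong co-Hopficity plus transfer of existential sentences) is the right one, the reconstruction has real gaps.

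The most serious issue is the first step. You state, as a ``general principle'', that if $G$ is finitely generated and equationally Noetherian and $\mathrm{Th}_\exists(G) = \mathrm{Th}_\exists(G')$ with $G'$ finitely generated, then $G$ embeds into $G'$ with existentially closed image. That principle is false as stated: equational Noetherianity alone gives no such conclusion, and the co-Hopf--type hypothesis is precisely what cannot be dropped. Concretely, your injectivity argument --- ``the relevant inequations, in particular $[x,y]\neq 1$, are preserved'' --- does not work: preserving finitely many inequations only rules out finitely many kernel elements. The correct argument, and the one implicit in André's lemma, uses strong co-Hopficity already at this step: if $\phi\colon G\to G'$ (obtained from the existential sentence encoding the relators of $G$ together with the nontriviality of $F$) had some $1\neq w\in\ker\phi$, then the existential sentence asserting ``there is a tuple satisfying the relators, with $F$ nontrivial and $w$ trivial'' would hold in $G'$ and hence in $G$, producing an endomorphism of $G$ that kills $w$ but misses $F$, contradicting strong co-Hopficity. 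The existential closedness of $i(G)$ is proved by the same transfer-and-pull-back mechanism, not by a ``back-and-forth on existential formulas'' as you describe.

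Your appeal to linearity of $A(\tilde A_n)$ to obtain equational Noetherianity is also problematic: it is stated without reference, it is a nontrivial claim, and --- more to the point --- it is not what the paper's (i.e.\ André's) argument relies on, so it adds a dependency that neither you nor the paper has justified. Your sketch of the retraction (reflect the generators of $G'$ into $i(G)$ using existential closedness, then use $F$ and strong co-Hopficity to see that the composite is an automorphism of $G$, then correct by its inverse) is in the right spirit, and is essentially what André's Remark~4.3 does, but as written it leaves unaddressed the issue that $G'$ need not be finitely presented, so realizing ``the same relations as the generating tuple of $G'$'' inside $i(G)$ is a priori an infinite condition and needs either a saturation/compactness step or an appeal to the structure of André's proof. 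In short: cite \cite[Lemma~4.2 and Remark~4.3]{andre_exclos} directly, as the paper does, or, if you want a self-contained argument, move strong co-Hopficity to the front of the proof where it is actually doing the work for injectivity and existential closedness, and drop the unnecessary equational Noetherianity detour.
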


\begin{proof}
 By in \cite[Lemma 4.2 and Remark 4.3]{andre_exclos}, as $G$ is strongly co-Hopfian, there exists an embedding $i : G\hookrightarrow G'$ such that $i(G)$ is existentially closed in $G'$. The fact that $G'$ retracts onto $i(G)$ is not stated in \cite[Lemma 4.2]{andre_exclos}, but it appears explicitly in the proof of this lemma.
\end{proof}

\begin{corollary}\label{tildeA_exequiv}
Two Artin groups of type $\tilde{A}_n$ and $\tilde{A}_m$ with $n,m\geq 4$ are existentially equivalent if and only if $n=m$.
\end{corollary}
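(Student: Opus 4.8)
The forward implication is immediate, since isomorphic groups are existentially equivalent, so the plan is to prove the converse: $\mathrm{Th}_{\exists}(A(\tilde{A}_n)) = \mathrm{Th}_{\exists}(A(\tilde{A}_m))$ with $n, m \geq 4$ forces $n = m$. Write $A = A(\tilde{A}_n)$ and $B = A(\tilde{A}_m)$; both are finitely presented, hence finitely generated, and both are of type $\tilde{A}_k$ with $k \geq 4$, so Corollary~\ref{affArt_retr} applies with either one in the role of $G$ and the other in the role of $G'$. From $(G, G') = (A, B)$ we obtain an embedding of $A$ into $B$ whose image is a retract of $B$, and from $(G, G') = (B, A)$ an embedding of $B$ into $A$ whose image is a retract of $A$. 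Thus $A$ is a retract of $B$ \emph{and} $B$ is a retract of $A$.

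Next I would transfer this to second integral homology. If $r \colon G' \to R$ is a retraction with inclusion $\iota \colon R \hookrightarrow G'$, then $r \circ \iota = \mathrm{id}_R$, so $H_2(r) \circ H_2(\iota) = \mathrm{id}$, and hence $H_2(\iota)$ realizes $H_2(R; \mathbb{Z})$ as a direct summand of $H_2(G'; \mathbb{Z})$. Applying this to both retractions, $H_2(A; \mathbb{Z})$ is a direct summand of $H_2(B; \mathbb{Z})$ and conversely. Since the $K(\pi,1)$ conjecture holds for affine Artin groups (\cite{P-Salvetti21}), the Salvetti complex of $A(\tilde{A}_k)$ is a finite $K(\pi,1)$, so these groups are of type $F$ and $H_2(-;\mathbb{Z})$ is a finitely generated abelian group. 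Two finitely generated abelian groups each of which is a direct summand of the other are isomorphic (compare free ranks and $p$-torsion), so $H_2(A; \mathbb{Z}) \cong H_2(B; \mathbb{Z})$.

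Finally I would read off the invariant from our homology computation. The Coxeter graph of $\tilde{A}_k$ is the $(k+1)$-cycle: it is simply laced, irreducible, and has first Betti number $1$. Hence, by the homology theorem above (whose hypothesis is satisfied since $K(\pi,1)$ holds for $A(\tilde{A}_k)$), $H_2(A(\tilde{A}_k); \mathbb{Z}) \cong \mathbb{Z} \oplus (\mathbb{Z}/2\mathbb{Z})^{c_k}$, where $c_k$ is the explicitly computable constant attached there to the $(k+1)$-cycle. Comparing torsion parts of $H_2(A;\mathbb{Z}) \cong H_2(B;\mathbb{Z})$ yields $c_n = c_m$, and the explicit formula for $c_k$ — which one checks is strictly increasing in $k$ on the range $k \geq 4$ — then gives $n = m$. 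I expect this last step to be the crux: verifying that the explicit value $c_k$ for the $(k+1)$-cycle genuinely separates the groups $A(\tilde{A}_k)$, $k \geq 4$, is precisely the content of the explicit part of the homology theorem, and is also (together with the description of $\mathrm{End}(A(\tilde{A}_k))$ underlying Corollary~\ref{affArt_retr}) where the hypothesis $k \geq 4$ is used; the homological bookkeeping of the second paragraph is routine by comparison.
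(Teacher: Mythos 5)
Your proof breaks down at the very last step, and the gap is fatal rather than a missing detail. You propose to distinguish $A(\tilde{A}_n)$ from $A(\tilde{A}_m)$ by comparing the $2$-torsion rank $c_k$ in $H_2(A(\tilde{A}_k);\mathbb{Z}) \cong \mathbb{Z} \oplus (\mathbb{Z}/2\mathbb{Z})^{c_k}$, and you conjecture that $c_k$ is strictly increasing for $k \geq 4$. But Proposition~\ref{prop:affine-homology} says precisely the opposite: for the $(k+1)$-cycle all non-edges fall into a single equivalence class once $k \geq 4$, so $c_k = 1$ for \emph{every} $k \geq 4$, and $H_2(A(\tilde{A}_n);\mathbb{Z}) \cong \mathbb{Z} \oplus \mathbb{Z}/2\mathbb{Z}$ identically on the whole range $n \geq 4$. (Note $c_3 = 2 > 1 = c_4$, so it is not even monotone.) Thus your carefully set-up conclusion $H_2(A;\mathbb{Z}) \cong H_2(B;\mathbb{Z})$, while correct, carries no information at all about $n$ and $m$, and the argument collapses. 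The homology invariant is used in the paper only to separate $\tilde{A}_n$ from $\tilde{D}_m$ and $\tilde{E}_m$ (Corollary~\ref{tildeADE}), not to separate the $\tilde{A}_n$ from each other.

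The paper's actual route to Corollary~\ref{tildeA_exequiv} is shorter and uses a different structural fact. From Corollary~\ref{affArt_retr} applied in both directions (as you correctly set up) one obtains embeddings $i \colon A \hookrightarrow B$ and $j \colon B \hookrightarrow A$ with images that are retracts. Then $j \circ i \colon A \to A$ is an injective endomorphism; since $A(\tilde{A}_n)$ is co-Hopfian (by \cite{injart}), $j \circ i$ is an automorphism, hence $j$ is surjective and therefore an isomorphism, so $A \cong B$. That $A(\tilde{A}_n) \cong A(\tilde{A}_m)$ forces $n = m$ is then immediate (e.g.\ by cohomological dimension, or simply because the isomorphism type is determined by the Coxeter graph here). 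So you should replace the homology-comparison step with the co-Hopfian argument; the first two paragraphs of your proposal, setting up the mutual retractions, remain the right starting point.
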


\begin{proof}
This result follows from Corollary \ref{affArt_retr} and from the fact that $\tilde{A}_n$ (for any $n\geq 2$) is co-Hopfian by \cite{injart}.\end{proof}

Corollary \ref{affArt_retr} provides the injection and  retraction necessary to apply the techniques previously employed in the spherical case and for affine Artin groups of type $\tilde{A}_n$.
However, to show that such an injection or retraction cannot exist when dealing with other affine Artin groups, a different approach is required. Here, we use homology to provide an obstruction.

Recall that every Artin system $(A, S)$ has an associated \textit{orbit configuration space} which is conjectured to be a $K(A, 1)$ space \cite{ParConj}.
This conjecture, called the $K(\pi, 1)$ conjecture, is known to hold for affine Artin groups \cite{P-Salvetti21}.
The orbit configuration space of $(A, S)$ has the homotopy type of a CW complex known as the Salvetti complex \cite{Salvetti87,Salvetti94}.
The algebraic chain complex $C_*$ that computes the cellular homology of the Salvetti complex can therefore be used to compute the homology of an affine Artin group $A$.
Recall from \cite{Salvetti94} that the standard basis of $C_*$ is indexed by the following family of subsets of $S$:
\[ K = \{ \sigma \subseteq S \mid \text{the parabolic subgroup $A_\sigma$ generated by $\sigma$ is spherical} \};  \]
note that $K$ can be thought of as a simplicial complex on the vertex set $S$.
Specifically, there is a basis element $e_\sigma \in C_{|\sigma|}$ for every $\sigma \in K$, and the boundary $\partial$ is computed as follows:
\begin{equation}
   \partial e_\sigma = \sum_{\tau \subset \sigma} [\sigma : \tau] \left. \frac{W_\sigma(q)}{W_\tau(q)} \right|_{q=-1}
   \label{eq:boundary}
\end{equation}
where the sum is over all codimension-one faces $\tau$ of $\sigma$.
By $[\sigma : \tau] \in \{\pm 1\}$ we denote the incidence number between the simplices $\sigma$ and $\tau$, and by $W_\sigma(q) \in \mathbb{N}[q]$ we denote the Poincaré polynomial of the (finite) Coxeter system associated with the Artin system $(A_\sigma, \sigma)$ (see e.g.\ \cite{Hum90} for how to compute the Poincaré polynomials).
The notation with the vertical bar means that we evaluate at $q=-1$ the quotient $\frac{W_\sigma(q)}{W_\tau(q)}$, which is always a polynomial with nonnegative integer coefficients.

To find the required obstruction, we are going to compute $H_2(A; \mathbb{Z})$ where $A$ is an affine Artin group of type $\tilde{A}_n$, $\tilde{D}_n$, or $\tilde{E}_n$ (see Figure~\ref{irredaffineCoxdiagr}).
We perform this computation for any simply laced irreducible Artin group (not necessarily affine), under the assumption that the $K(\pi, 1)$ conjecture holds.

\begin{theorem}
    Let $\Gamma$ be a simply laced irreducible Coxeter graph, and suppose that the $K(\pi, 1)$ conjecture holds for the Artin group $A(\Gamma)$. Then $H_2(A(\Gamma); \mathbb{Z}) \cong \mathbb{Z}^b \oplus (\mathbb{Z} / 2\mathbb{Z})^c$, where $b \in \mathbb{N}$ is the first Betti number of the graph $\Gamma$ and $c \in \mathbb{N}$ is another natural number which can be explicitly computed from $\Gamma$.
    In particular, if $\Gamma$ is a tree, then $H_2(A(\Gamma); \mathbb{Z}) \cong (\mathbb{Z} / 2\mathbb{Z})^c$.
    \label{thm:second-homology}
\end{theorem}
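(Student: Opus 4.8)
The plan is to read off $H_2(A(\Gamma);\mathbb{Z})$ directly from the algebraic chain complex $C_*$ of the Salvetti complex, which computes $H_*(A(\Gamma);\mathbb{Z})$ precisely because we are assuming the $K(\pi,1)$ conjecture for $A(\Gamma)$. Only the truncation $C_3\to C_2\to C_1$ (with boundary maps $\partial_3$ and $\partial_2$) is relevant. When $\Gamma$ is simply laced the low-dimensional cells are transparent: every $2$-element $\sigma\subseteq S$ spans a spherical parabolic (of type $A_1\times A_1$ if $m_{s,t}=2$, of type $A_2$ if $m_{s,t}=3$), so $C_1\cong\mathbb{Z}^{S}$ and $C_2\cong\mathbb{Z}^{E}\oplus\mathbb{Z}^{\bar E}$, where $E$ denotes the edge set of $\Gamma$ (the pairs with label $3$) and $\bar E$ the set of non-edges (the pairs with label $2$); and a $3$-element $\sigma$ spans a spherical parabolic if and only if the subgraph of $\Gamma$ induced on $\sigma$ contains no triangle (a triangle would give a parabolic of type $\tilde A_2$, which is not spherical), i.e.\ is a forest, so $C_3$ is free on the triangle-free triples of $S$.

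Next I would evaluate the boundary formula \eqref{eq:boundary} using the Poincar\'e polynomials $W_{A_1}=[2]_q$, $W_{A_2}=[2]_q[3]_q$ and $W_{A_3}=[2]_q[3]_q[4]_q$, where $[n]_q=1+q+\cdots+q^{n-1}$ (together with products thereof for reducible spherical parabolics), and then specialize at $q=-1$. One finds: $\partial_2 e_{\{s,t\}}=\pm(e_{\{t\}}-e_{\{s\}})$ when $\{s,t\}\in E$ and $\partial_2 e_{\{s,t\}}=0$ when $\{s,t\}\in\bar E$; and, distinguishing $3$-subsets according to the number of edges of $\Gamma$ they induce, $\partial_3 e_\sigma=0$ if $\sigma$ induces no edge, $\partial_3 e_\sigma=\pm e_{\{r,s\}}\pm e_{\{r,t\}}$ (with $\{r,s\},\{r,t\}\in\bar E$) if $\sigma=\{r,s,t\}$ induces the single edge $\{s,t\}$, and $\partial_3 e_\sigma=\pm 2\,e_{\{s,u\}}$ (with $\{s,u\}\in\bar E$) if $\sigma=\{s,t,u\}$ induces the path in which $t$ is adjacent to both $s$ and $u$. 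From the first computation, $\ker\partial_2=\mathbb{Z}^{\bar E}\oplus Z_1(\Gamma)$, where $Z_1(\Gamma)=\ker(\mathbb{Z}^{E}\to\mathbb{Z}^{S})$ is the cycle space of the graph $\Gamma$: it is free of rank $b=|E|-|S|+1$, the first Betti number of the connected graph $\Gamma$ (and this is where irreducibility of $\Gamma$ is used), and it is a direct summand of $C_2$. The key structural observation is that in every case $\partial_3 e_\sigma\in\mathbb{Z}^{\bar E}$, so $\operatorname{im}\partial_3$ is contained in $\mathbb{Z}^{\bar E}$ and hence
\[
H_2(A(\Gamma);\mathbb{Z})=\ker\partial_2/\operatorname{im}\partial_3\;\cong\;\mathbb{Z}^{b}\oplus\bigl(\mathbb{Z}^{\bar E}/\operatorname{im}\partial_3\bigr).
\]

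It then remains to prove that $Q:=\mathbb{Z}^{\bar E}/\operatorname{im}\partial_3$ is an elementary abelian $2$-group; being finitely generated it is then isomorphic to $(\mathbb{Z}/2\mathbb{Z})^{c}$ with $c=\dim_{\mathbb{Z}/2\mathbb{Z}} Q=|\bar E|-\operatorname{rank}_{\mathbb{Z}/2\mathbb{Z}}(\partial_3\bmod 2)$, which is visibly computable from $\Gamma$. For this it suffices to show that $2\,e_{\{s,u\}}=0$ in $Q$ for every non-edge $\{s,u\}$, which I would establish by induction on the graph distance $d_\Gamma(s,u)\geq 2$ (finite, since $\Gamma$ is connected). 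If $d_\Gamma(s,u)=2$, a common neighbour $t$ of $s$ and $u$ gives a path of the third type above, whence $2e_{\{s,u\}}=0$ directly. If $d_\Gamma(s,u)\geq 3$, pick a geodesic $s=v_0,v_1,\dots,v_\ell=u$; since its length is $\geq 3$, the vertex $v_0$ is adjacent to neither $v_{\ell-1}$ nor $v_\ell$, so the single-edge relation for the triple $\{v_0,v_{\ell-1},v_\ell\}$ is available and identifies $e_{\{s,u\}}$ with $\pm e_{\{s,v_{\ell-1}\}}$ in $Q$; since $d_\Gamma(s,v_{\ell-1})=\ell-1$, the inductive hypothesis applies and gives $2e_{\{s,u\}}=0$. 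This proves the main statement, and taking $\Gamma$ to be a tree forces $b=0$ and yields the final sentence.

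The substantive steps are the inclusion $\operatorname{im}\partial_3\subseteq\mathbb{Z}^{\bar E}$ --- which is exactly what pins the free rank of $H_2$ to $b$ rather than to something larger like $|\bar E|+b$ --- together with the distance induction showing that the complementary cokernel $Q$ is annihilated by $2$. Everything else (enumerating the relevant cells, computing and specializing the Poincar\'e-polynomial quotients, and tracking the incidence signs $[\sigma:\tau]$) is routine bookkeeping; in particular the signs are immaterial, since the relations coming from $\partial_3$ are used only up to sign and up to the factor $2$.
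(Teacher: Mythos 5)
Your proposal is correct and follows essentially the same route as the paper's proof: compute $\partial_2$ and $\partial_3$ from the Salvetti chain complex via the Poincar\'e polynomials at $q=-1$, decompose $\ker\partial_2$ as the cycle space plus the free module on non-edges, observe that $\operatorname{im}\partial_3$ lands entirely in the non-edge summand, and use a distance argument (which you phrase as an explicit induction, while the paper chains equalities along the geodesic) to show the quotient of the non-edge part is annihilated by $2$. The only cosmetic difference is that the paper packages $c$ as the number of equivalence classes of non-edges under the identifications coming from one-edge triples, whereas you describe $c$ as $|\bar E|-\operatorname{rank}_{\mathbb{Z}/2\mathbb{Z}}(\partial_3\bmod 2)$; these agree.
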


\begin{proof}
    To compute $H_2(A; \mathbb{Z})$, we need the following portion of the aforementioned chain complex $C_*$:
    \[ C_3 \xrightarrow{\;\partial_3\;} C_2 \xrightarrow{\;\partial_2\;} C_1. \]
    Table \ref{table:poincare-polynomials} lists all values for the Poincaré polynomials appearing in the computation of $\partial_3$ and $\partial_2$ according to \eqref{eq:boundary}.
    
    \begin{table}
        \begin{tabular}{cc}
            \toprule
            Coxeter subgraph & $W_\sigma(q)$  \\
            \toprule
            $A_1$ & $1 + q$ \\
            \midrule
            $A_1 \sqcup A_1$ & $(1 + q)^2$ \\
            $A_2$ & $(1 + q)(1 + q + q^2)$ \\
            \midrule
            $A_1 \sqcup A_1 \sqcup A_1$ & $(1 + q)^3$ \\
            $A_2 \sqcup A_1$ & $(1 + q)^2(1 + q + q^2)$ \\
            $A_3$ & $(1 + q)^2(1 + q + q^2)(1 + q^2)$ \\
            \bottomrule
        \end{tabular}
        \medskip
        \caption{Poincaré polynomials $W_\sigma(q)$ as a function of the type of the Coxeter subgraph induced by $\sigma$}
        \label{table:poincare-polynomials}
    \end{table}
    
    Let $s_1, \dots, s_n$ be the generators of $A(\Gamma)$, which correspond to the vertices of $\Gamma$.
    To simplify the notation, denote by $e_{i}$, $e_{ij}$, and $e_{ijk}$ the basis elements $e_{\{s_i\}}$, $e_{\{s_i, s_j\}}$, and $e_{\{s_i, s_j, s_k\}}$, respectively, for $1 \leq i < j < k \leq n$.
    It is convenient to define $e_{ij}$ and $e_{ijk}$ also when the indices are not in increasing order, by skew symmetry: $e_{ij} = -e_{ji}$; $e_{ijk} = e_{jki} = e_{kij} = -e_{jik} = -e_{ikj} = -e_{kji}$.

    Using \eqref{eq:boundary}, we obtain:
    \[ \partial e_{ij} = \begin{cases}
        0 & \text{if $m_{ij} = 2$} \\
        e_i - e_j & \text{if $m_{ij} = 3$}.
    \end{cases} \]
    Therefore, we have that $\ker \partial_2 \cong H_1(\Gamma; \mathbb{Z}) \oplus N$, where $N$ is the free $\mathbb{Z}$-module generated by all $e_{ij}$ with $m_{ij} = 2$; in other words, the generators of $N$ are in bijection with the non-edges of the Coxeter graph.

    We now examine the image of the generators $e_{ijk}$ of $C_3$.
    If $m_{ij} = m_{ik} = m_{jk} = 2$ (i.e., there is no edge between the three vertices $s_i, s_j, s_k$), then $\partial e_{ijk} = 0$.
    If $m_{ij} = 3$ and $m_{ik} = m_{jk} = 2$, then $\partial e_{ijk} = e_{jk} - e_{ik}$.
    Therefore, two generators $e_{ik}$ and $e_{jk}$ of $N$ are identified in $H_2(A; \mathbb{Z})$ whenever there is an edge connecting $s_i$ and $s_j$ in $\Gamma$.
    Up to permutation of the indices, there is only one remaining case, namely $m_{ij} = 2$ and $m_{ik} = m_{jk} = 3$; in this case, we have $\partial e_{ijk} = 2e_{ij}$.
    Therefore, a generator $e_{ij}$ of $N$ satisfies $2 [e_{ij}] = 0$ in $H_2(A; \mathbb{Z})$ whenever the vertices $s_i$ and $s_j$ have distance $2$ in the Coxeter graph.
    
    If $s_i$ and $s_j$ have distance $d \geq 3$ in the Coxeter graph, and $s_{i_0}, s_{i_1}, \dots,s_{i_d}$ is a shortest path between them with $i_0 = i$ and $i_d = j$, then $[e_{ij}] = [e_{i_1j}] = \dots = [e_{i_{d-2}j}]$; here, the vertices $s_{i_{d-2}}$ and $s_j$ have distance $2$, and therefore $2[e_{ij}] = 2[e_{i_{d-2}j}] = 0$.
    We conclude that every generator $e_{ij}$ of $N$ in fact satisfies $2 [e_{ij}] = 0$ in $H_2(A;\mathbb{Z})$.

    Note that $\text{im}\, \partial_3 \subseteq N$, and therefore
    $H_2(A; \mathbb{Z}) \cong H_1(\Gamma; \mathbb{Z}) \oplus ( N / \text{im}\, \partial_3 )$.
    The first summand is a free $\mathbb{Z}$-module of rank equal to the first Betti number of~$\Gamma$.
    The second summand is of the form $(\mathbb{Z}/2\mathbb{Z})^c$, where $c$ is the number of equivalence classes of non-edges with respect to the transitive closure of the following equivalence relation: $\{s_i, s_k\} \sim \{ s_j, s_k \}$ whenever $\{s_i, s_j\}$ is an edge.
\end{proof}

In particular, we can explicitly compute the second homology group of simply~laced irreducible affine Artin groups.

\begin{proposition}
    Let $A$ be an affine Artin group of type $\tilde A_n$, $\tilde D_n$, or $\tilde E_n$.
    Then
    \[
        H_2(A; \mathbb{Z}) \cong
        \begin{cases}
            0 & \text{in the case $\tilde A_1$} \\
            \mathbb{Z} & \text{in the case $\tilde A_2$} \\
            \mathbb{Z} \oplus (\mathbb{Z} / 2 \mathbb{Z})^2 & \text{in the case $\tilde A_3$} \\
            \mathbb{Z} \oplus \mathbb{Z} / 2 \mathbb{Z} & \text{in the case $\tilde A_n$ with $n \geq 4$} \\
            (\mathbb{Z} / 2 \mathbb{Z})^6 & \text{in the case $\tilde D_4$} \\
            (\mathbb{Z} / 2 \mathbb{Z})^3 & \text{in the case $\tilde D_n$ with $n\geq 5$} \\
            \mathbb{Z} / 2 \mathbb{Z} & \text{in the case $\tilde E_n$ with $n=6,7,8$}. \\
        \end{cases}
    \]
    \label{prop:affine-homology}
\end{proposition}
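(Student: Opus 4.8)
The plan is to read everything off Theorem~\ref{thm:second-homology}. Since the $K(\pi,1)$ conjecture holds for affine Artin groups by \cite{P-Salvetti21}, and since the Coxeter graphs $\tilde A_n$ ($n\ge 2$), $\tilde D_n$ ($n\ge 4$) and $\tilde E_n$ ($n=6,7,8$) are all connected with every label in $\{2,3\}$, Theorem~\ref{thm:second-homology} applies to each of the corresponding Artin groups and gives $H_2(A;\mathbb{Z})\cong\mathbb{Z}^b\oplus(\mathbb{Z}/2\mathbb{Z})^c$, where $b$ is the first Betti number of the graph and $c$ is the number of classes of non-edges under the transitive closure of the relation $\{s_i,s_k\}\sim\{s_j,s_k\}$ whenever $\{s_i,s_j\}$ is an edge. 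The one diagram not covered by the hypotheses of Theorem~\ref{thm:second-homology} is $\tilde A_1$, whose single edge carries the label $\infty$; here $A(\tilde A_1)\cong\mathbb{Z}*\mathbb{Z}$ is free, so $H_2=0$, which I would dispatch directly. Thus the whole proposition reduces to computing $b$ and $c$ for the remaining affine diagrams.

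Computing $b$ is immediate and accounts for the free summands: $\tilde A_n$ is an $(n+1)$-cycle, so $b=1$, whereas $\tilde D_n$ and $\tilde E_n$ are trees, so $b=0$. For the $2$-torsion part I would first record the following elementary fact: in a tree a non-edge $\{u,v\}$ forms a singleton $\sim$-class precisely when $N(u)=N(v)$, and in a tree this forces $u$ and $v$ to be two leaves sharing their unique neighbour (a ``cherry''), since two non-adjacent vertices with two common neighbours would close a cycle. So all the $c$-counts come down to locating the cherries and then showing that the remaining non-edges collapse to a single class.

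Concretely: for $\tilde A_2$ (a triangle) there are no non-edges, so $c=0$; for $\tilde A_3$ (a square) the two non-edges are the diagonals, each of which is easily checked to be isolated, so $c=2$; for $\tilde A_n$ with $n\ge 4$ the cycle has length $\ge 5$, and a short sliding argument — push one endpoint of a non-edge to an adjacent cycle vertex, staying among non-edges — identifies all non-edges, so $c=1$. For $\tilde D_4=K_{1,4}$ every non-edge joins two of the four leaves and hence is a cherry, so $c=\binom{4}{2}=6$. For $\tilde D_n$ with $n\ge 5$ the cherries are exactly the two pairs of leaves attached to the two ``fork'' vertices, and I would show by induction along the spine that every other non-edge is $\sim$-equivalent to a fixed one, giving $c=3$. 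For $\tilde E_n$ with $n=6,7,8$ the three leaves are pairwise at distance $\ge 3$, so there are no cherries, and the analogous argument yields a single class, $c=1$. Substituting these pairs $(b,c)$ into $\mathbb{Z}^b\oplus(\mathbb{Z}/2\mathbb{Z})^c$ reproduces the stated table.

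The only genuinely non-formal point — and the one I expect to be the main obstacle — is the verification that, once the cherries are removed, all remaining non-edges really do lie in one $\sim$-class. For $\tilde D_4$ this is vacuous and for $\tilde E_6,\tilde E_7,\tilde E_8$ it is a finite check, but for the families $\tilde A_n$ ($n\ge 4$) and $\tilde D_n$ ($n\ge 5$) it needs a careful (if routine) sliding/induction argument along the cycle, respectively the spine, keeping track at each step that the intermediate pair is still a non-edge. I would organise this by first connecting all ``spine-to-spine'' non-edges and then absorbing the non-edges that involve a leaf.
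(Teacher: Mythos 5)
Your proposal is correct and follows essentially the same route as the paper: invoke Theorem~\ref{thm:second-homology} via the $K(\pi,1)$ conjecture (handling $\tilde A_1$ separately), read off $b$ from the cycle/tree dichotomy, and count equivalence classes of non-edges diagram by diagram. The ``$N(u)=N(v)$ cherry'' criterion is a nice way of systematizing the singleton-class detection that the paper simply asserts case by case, but it does not change the underlying argument.
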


\begin{figure}
	\centering
	\begin{tikzpicture}
		\node () at (0,5) {$\tilde{A}_1:$};
		\draw[thick] (2,5) -- (3,5);
		\node[circle, draw, fill, text width=1mm, inner sep=0.5] at (2,5) {};
		\node[circle, draw, fill, text width=1mm, inner sep=0.5] at (3,5) {};
		
		\node[below, font=\small] at (2,5) {$s_1$};
		\node[below, font=\small] at (3,5) {$s_2$};
        \node[above, font=\small] at (2.5,5) {$\infty$};

		\node () at (0,4) {$\tilde{A}_n$\;$(n\geq 2):$};
		\draw[thick] (2,4) -- (4,4)
		(5,4) -- (6,4)
        (2,4) -- (4,4.5)
        (6,4) -- (4,4.5);
		\draw[dashed, thick] (4,4) -- (5,4);
		\node[circle, draw, fill, text width=1mm, inner sep=0.5] at (2,4) {};
		\node[circle, draw, fill, text width=1mm, inner sep=0.5] at (3,4) {};
		\node[circle, draw, fill, text width=1mm, inner sep=0.5] at (4,4) {};
        \node[circle, draw, fill, text width=1mm, inner sep=0.5] at (4,4.5) {};
		\node[circle, draw, fill, text width=1mm, inner sep=0.5] at (5,4) {};
		\node[circle, draw, fill, text width=1mm, inner sep=0.5] at (6,4) {};
		
		\node[below, font=\small] at (2,4) {$s_1$};
		\node[below, font=\small] at (3,4) {$s_2$};
		\node[below, font=\small] at (4,4) {$s_3$};
		\node[below, font=\small] at (5,4) {$s_{n-1}$};
		\node[below, font=\small] at (6,4) {$s_n$};
        \node[above, font=\small] at (4,4.5) {$s_{n+1}$};

		\node () at (0,3) {$\tilde{D}_n$\;$(n\geq 4):$};
		\draw[thick] (2,3.3) -- (3,3) -- (4,3)
		(2,2.7) -- (3,3)
		(5,3) -- (6,3.3)
        (5,3) -- (6,2.7);
		\draw[dashed, thick] (4,3) -- (5,3);
		\node[circle, draw, fill, text width=1mm, inner sep=0.5] at (2,3.3) {};
		\node[circle, draw, fill, text width=1mm, inner sep=0.5] at (2,2.7) {};
		\node[circle, draw, fill, text width=1mm, inner sep=0.5] at (3,3) {};
		\node[circle, draw, fill, text width=1mm, inner sep=0.5] at (4,3) {};
		\node[circle, draw, fill, text width=1mm, inner sep=0.5] at (5,3) {};
		\node[circle, draw, fill, text width=1mm, inner sep=0.5] at (6,3.3) {};
        \node[circle, draw, fill, text width=1mm, inner sep=0.5] at (6,2.7) {};
		
		\node[below, font=\small] at (2,3.3) {$s_1$};
		\node[below, font=\small] at (2,2.7) {$s_2$};
		\node[below, font=\small] at (3,3) {$s_3$};
		\node[below, font=\small] at (4,3) {$s_4$};
		\node[below, font=\small] at (5,3) {$s_{n-1}$};
		\node[below, font=\small] at (6,3.3) {$s_n$};
        \node[below, font=\small] at (6,2.7) {$s_{n+1}$};

		\node () at (0,2) {$\tilde{E}_6:$};
		\draw[thick] (2,2) -- (6,2)
		(4,2) -- (4,1);
		
		\node[circle, draw, fill, text width=1mm, inner sep=0.5] at (2,2) {};
		\node[circle, draw, fill, text width=1mm, inner sep=0.5] at (3,2) {};
		\node[circle, draw, fill, text width=1mm, inner sep=0.5] at (4,2) {};
		\node[circle, draw, fill, text width=1mm, inner sep=0.5] at (5,2) {};
		\node[circle, draw, fill, text width=1mm, inner sep=0.5] at (6,2) {};
        \node[circle, draw, fill, text width=1mm, inner sep=0.5] at (4,1) {};
		\node[circle, draw, fill, text width=1mm, inner sep=0.5] at (4,1.5) {};
		
		\node[below, font=\small] at (2,2) {$s_1$};
		\node[right, font=\small] at (4,1.5) {$s_4$};
        \node[right, font=\small] at (4,1) {$s_5$};
		\node[below, font=\small] at (3,2) {$s_2$};
		\node[below left, font=\small] at (4,2) {$s_3$};
		\node[below, font=\small] at (5,2) {$s_6$};
		\node[below, font=\small] at (6,2) {$s_7$};

		\node () at (0,0) {$\tilde{E}_7:$};
		\draw[thick] (2,0) -- (8,0)
		(5,0) -- (5,-0.5);
		
		\node[circle, draw, fill, text width=1mm, inner sep=0.5] at (2,0) {};
		\node[circle, draw, fill, text width=1mm, inner sep=0.5] at (3,0) {};
		\node[circle, draw, fill, text width=1mm, inner sep=0.5] at (4,0) {};
		\node[circle, draw, fill, text width=1mm, inner sep=0.5] at (5,0) {};
		\node[circle, draw, fill, text width=1mm, inner sep=0.5] at (6,0) {};
		\node[circle, draw, fill, text width=1mm, inner sep=0.5] at (7,0) {};
        \node[circle, draw, fill, text width=1mm, inner sep=0.5] at (8,0) {};
		\node[circle, draw, fill, text width=1mm, inner sep=0.5] at (5,-0.5) {};
		
		\node[below, font=\small] at (2,0) {$s_1$};
		\node[right, font=\small] at (5,-0.5) {$s_5$};
		\node[below, font=\small] at (3,0) {$s_2$};
		\node[below, font=\small] at (4,0) {$s_3$};
		\node[below left, font=\small] at (5,0) {$s_4$};
		\node[below, font=\small] at (6,0) {$s_6$};
		\node[below, font=\small] at (7,0) {$s_7$};
        \node[below, font=\small] at (8,0) {$s_8$};

		\node () at (0,-1) {$\tilde{E}_8:$};
		\draw[thick] (2,-1) -- (9,-1)
		(4,-1) -- (4,-1.5);
		
		\node[circle, draw, fill, text width=1mm, inner sep=0.5] at (2,-1) {};
		\node[circle, draw, fill, text width=1mm, inner sep=0.5] at (3,-1) {};
		\node[circle, draw, fill, text width=1mm, inner sep=0.5] at (4,-1) {};
		\node[circle, draw, fill, text width=1mm, inner sep=0.5] at (5,-1) {};
		\node[circle, draw, fill, text width=1mm, inner sep=0.5] at (6,-1) {};
		\node[circle, draw, fill, text width=1mm, inner sep=0.5] at (7,-1) {};
		\node[circle, draw, fill, text width=1mm, inner sep=0.5] at (8,-1) {};
        \node[circle, draw, fill, text width=1mm, inner sep=0.5] at (9,-1) {};
		\node[circle, draw, fill, text width=1mm, inner sep=0.5] at (4,-1.5) {};
		
		\node[below, font=\small] at (2,-1) {$s_1$};
		\node[right, font=\small] at (4,-1.5) {$s_4$};
		\node[below, font=\small] at (3,-1) {$s_2$};
		\node[below left, font=\small] at (4,-1) {$s_3$};
		\node[below, font=\small] at (5,-1) {$s_5$};
		\node[below, font=\small] at (6,-1) {$s_6$};
		\node[below, font=\small] at (7,-1) {$s_7$};
		\node[below, font=\small] at (8,-1) {$s_8$};
        \node[below, font=\small] at (9,-1) {$s_9$};
	\end{tikzpicture}
	\caption{The Coxeter graphs of the affine Artin groups of type $\tilde{A}_n$, $\tilde{D}_n$, and $\tilde{E}_n$.}
	\label{irredaffineCoxdiagr}
\end{figure}
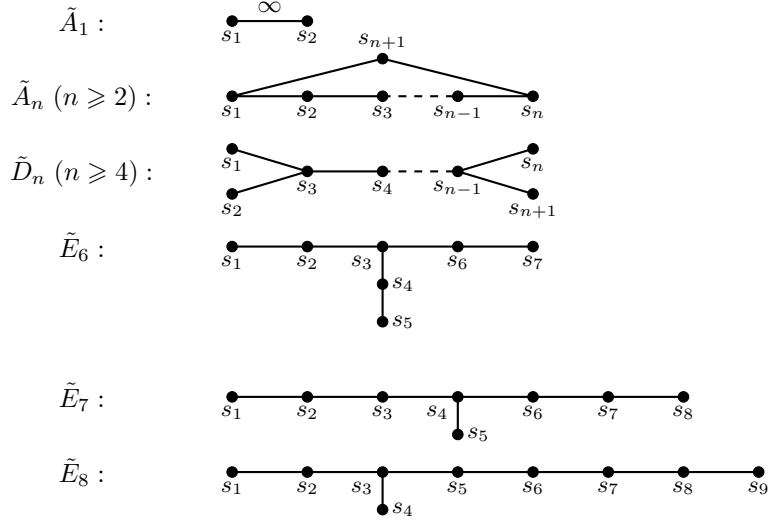

\begin{proof}
    Label the generators as in Figure \ref{irredaffineCoxdiagr}. The case $\tilde{A}_1$ is not simply laced, but $C_2 = 0$ and so there is no homology.
    In the other cases, we can use Theorem~\ref{thm:second-homology}.
    For $n \geq 2$, the $\tilde{A}_n$ graph is a cycle, so $H_1(\Gamma; \mathbb{Z}) \cong \mathbb{Z}$. This accounts for the $\mathbb{Z}$ summand in the second homology, which $\tilde{D}_n$ and $\tilde{E}_n$ do not have.

    To explicitly compute the rank of the $2$-torsion part, we need to determine the number of equivalence classes of non-edges $\{s_i, s_j\}$, as described in the proof of Theorem~\ref{thm:second-homology}:
    \begin{itemize}
        \item in the case $\tilde{A}_2$, there are zero non-edges;
        \item in the case $\tilde{A}_3$, the two non-edges $\{s_1, s_3\}$ and $\{s_2, s_4\}$ are not equivalent;
        \item in the case $\tilde{A}_n$ for $n \geq 4$, all non-edges are equivalent to each other;
        \item in the case $\tilde{D}_4$, the six non-edges $\{s_1, s_2\}$, $\{s_1, s_4\}$, $\{s_1, s_5\}$, $\{s_2, s_4\}$, $\{s_2, s_5\}$, and $\{s_4, s_5\}$ are not equivalent;
        \item in the case $\tilde{D}_n$ for $n \geq 5$, all non-edges are equivalent to each other, except $\{s_1, s_2\}$ and $\{s_n, s_{n+1}\}$, for a total of three equivalence classes;
        \item in the case $\tilde{E}_n$, all non-edges are equivalent to each other. \qedhere
    \end{itemize}
\end{proof}

\begin{corollary}\label{tildeADE}
    Let $A$ be an Artin group of type $\tilde D_m$ or $\tilde E_m$.
    Then $A$ does not retract onto a copy of the Artin group $A(\tilde A_n)$ for any $n$.
\end{corollary}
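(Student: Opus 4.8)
The plan is to extract a homological obstruction from Proposition~\ref{prop:affine-homology}, using the elementary fact that a retraction splits on homology. Suppose towards a contradiction that $A$, of type $\tilde D_m$ or $\tilde E_m$, retracts onto a subgroup $B \leq A$ with $B \cong A(\tilde A_n)$; write $\iota\colon B \hookrightarrow A$ for the inclusion and $\rho\colon A \to B$ for a retraction, so that $\rho\iota = \mathrm{id}_B$. Applying the functor $H_k(-;\mathbb{Z})$ yields $\rho_*\iota_* = \mathrm{id}$ on $H_k(B;\mathbb{Z})$, hence $H_k(A;\mathbb{Z}) \cong H_k(B;\mathbb{Z}) \oplus \ker \rho_*$ and in particular $\mathrm{rank}_{\mathbb{Z}}\, H_k(A(\tilde A_n);\mathbb{Z}) \leq \mathrm{rank}_{\mathbb{Z}}\, H_k(A;\mathbb{Z})$ for every $k$, in particular for $k = 1, 2$.

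Next I would record the relevant ranks. For $H_1$: since $H_1(A(\Gamma);\mathbb{Z})$ is the abelianization of $A(\Gamma)$, a connected simply laced Coxeter graph all of whose edges are labeled $3$ — such as $\tilde A_n$ with $n \geq 2$, $\tilde D_m$, or $\tilde E_m$ — gives $H_1 \cong \mathbb{Z}$, because every braid relation $s_is_js_i = s_js_is_j$ forces $[s_i] = [s_j]$ in the abelianization, so all generators get identified; while $\tilde A_1$, whose Artin group is free of rank $2$, gives $H_1 \cong \mathbb{Z}^2$. For $H_2$: by Proposition~\ref{prop:affine-homology}, $H_2(A(\tilde A_n);\mathbb{Z})$ has free rank $0$ when $n = 1$ and free rank $1$ when $n \geq 2$, whereas $H_2(A(\tilde D_m);\mathbb{Z})$ and $H_2(A(\tilde E_m);\mathbb{Z})$ are finite, i.e.\ have free rank $0$.

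Combining these: if $n \geq 2$ then $1 = \mathrm{rank}_{\mathbb{Z}}\, H_2(A(\tilde A_n);\mathbb{Z}) \leq \mathrm{rank}_{\mathbb{Z}}\, H_2(A;\mathbb{Z}) = 0$, a contradiction; and if $n = 1$ then $2 = \mathrm{rank}_{\mathbb{Z}}\, H_1(A(\tilde A_1);\mathbb{Z}) \leq \mathrm{rank}_{\mathbb{Z}}\, H_1(A;\mathbb{Z}) = 1$, again a contradiction. Hence no such retraction can exist, for any $n$. I do not expect a genuinely hard step here: all of the real work is already packaged in Theorem~\ref{thm:second-homology} and Proposition~\ref{prop:affine-homology} (which in turn use the $K(\pi,1)$ conjecture for affine Artin groups). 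The only point demanding a little care is the degenerate, non-simply-laced case $\tilde A_1$, where the $H_2$ obstruction vanishes and one must instead fall back on $H_1$ to exclude a retraction.
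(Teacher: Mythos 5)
Your proposal is correct and follows essentially the same route as the paper: a retraction splits on integral homology, the case $n\geq 2$ is excluded by comparing the free ranks of $H_2$ via Proposition~\ref{prop:affine-homology}, and the degenerate case $n=1$ is excluded by comparing first homology (abelianizations). No gaps.
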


\begin{proof}
    Let $G \subseteq A$ be a copy of $A(\tilde A_n)$ such that $A$ retracts onto $G$.
    Specifically, we have an inclusion map $i \colon G \hookrightarrow A$ and a projection $\pi \colon A \to G$ such that $\pi \circ i = \text{id}_{G}$.
    The induced maps $\pi_*$ and $i_*$ in homology satisfy $\pi_* \circ i_* = \text{id}_{H_*(G; \mathbb{Z})}$.
    In particular, the map $i_*$ is an injection of $H_*(G;\mathbb{Z})$ into $H_*(A; \mathbb{Z})$.

    If $n = 1$, this is impossible because $H_1(G; \mathbb{Z}) \cong \mathbb{Z}^2$ and $H_1(A; \mathbb{Z}) \cong \mathbb{Z}$ (the first homology group can be easily computed from the defining presentation of an Artin group).
    If $n \geq 2$, this is impossible by Proposition~\ref{prop:affine-homology}, because $H_2(G;\mathbb{Z})$ contains a copy of $\mathbb{Z}$, whereas $H_2(A; \mathbb{Z})$ does not.
\end{proof}

\begin{proof}[Proof of \ref{affArt_existeq}]
    This immediately follows from Corollaries \ref{tildeA_exequiv} and \ref{tildeADE}.
\end{proof}

We think that, for $n\geq 5$ odd, the homology group $H_{n-1}(A(\tilde{A}_n); \mathbb{Z})$ has more torsion than $H_{n-1}(A(\tilde{B}_m); \mathbb{Z})$ and $H_{n-1}(A(\tilde{C}_m); \mathbb{Z})$. This would allow one to extend Corollary \ref{tildeADE} to the families $\tilde{B}_m$ and $\tilde{C}_m$ when $n$ is odd and $\geq 5$.
In the remaining cases, however, homology does not appear to provide an obstruction to $A(\tilde{A}_n)$ being a retract of $A(\tilde{B}_m)$ or $A(\tilde{C}_m)$, so new ideas would be required.

\end{document}